\definecolor{blond}{rgb}{0.92, 0.89, 0.80}
\definecolor{champagne}{rgb}{0.97, 0.91, 0.81}
\definecolor{RedDef}{RGB}{160, 4, 23}
\definecolor{GreenDef}{RGB}{27, 132, 5}
\definecolor{BlueDef}{RGB}{11, 78, 188}
\definecolor{ThmDef}{RGB}{50, 6, 211}
\titleformat{\section}[hang]
{\scshape\small\color{RedDef}\filcenter}{\S\ \thesection.}{0.2em}{}
\titleformat{\subsection}[runin]
{\scshape\color{RedDef}}{\thesubsection.}{0.2em}{}[.]
\definecolor{Red}{RGB}{160, 4, 23}
\definecolor{Green}{RGB}{27, 132, 5}
\definecolor{Blue}{RGB}{11, 78, 188}
\def\MR#1{%
    \relax\ifhmode\unskip\spacefactor3000 \space\fi
\href{http://www.ams.org/mathscinet-getitem?mr=#1}{ MR#1}
}
\renewcommand{\BibLabel}{%
    \Hy@raisedlink{\hyper@anchorstart{cite.\CurrentBib}\hyper@anchorend}%
    [\thebib]%
}
\theoremstyle{plain}
\newtheorem{theorem}{Theorem}[section]
\newtheorem{lemma}[theorem]{Lemma}
\newtheorem{proposition}[theorem]{Proposition}
\newtheorem{corollary}[theorem]{Corollary}
\theoremstyle{definition}
\newtheorem{definition}[theorem]{Definition}
\newtheorem{remark}[theorem]{Remark}
\newtheorem{example}[theorem]{Example}
\newtheorem{notation}[theorem]{Notation}
\newcommand{\defin}{\overset{\mathrm{def}}{=}}
\newcommand{\ssyt}{\mathrm{SSYT}}
\newcommand{\less}{\unlhd}
\newcommand{\ZZ}{{\mathbb{Z}}}
\newcommand{\RR}{{\mathbb{R}}}
\newcommand{\QQ}{{\mathbb{Q}}}
\newcommand{\FF}{{\mathbb{F}}}
\newcommand{\1}{\mathds{1}}
\newcommand{\C}{\mathcal{C}}
\newcommand{\G}{{\mathrm{G}}}
\newcommand{\GL}{\mathrm{GL}}
\renewcommand{\P}{\mathrm{P}}
\newcommand{\F}{\mathrm{F}}
\newcommand{\E}{\mathrm{E}}
\newcommand{\Hung}{\H}
\newcommand{\HH}{\mathrm{H}}
\newcommand{\Ind}{\mathrm{Ind}}
\newcommand{\conj}{\mathrm{Cl}_\G}
\newcommand{\cen}{\mathrm{Z}_\G}
\newcommand{\K}{{\mathrm{K}}}
\newcommand{\Q}{\mathrm{Q}}
\newcommand{\Z}{\mathrm{Z}}
\newcommand{\A}{\mathrm{A}}
\newcommand{\B}{\mathrm{B}}
\newcommand{\R}{\mathrm{R}}
\newcommand{\W}{\mathrm{W}}
\newcommand{\syt}{\mathrm{SYT}}
\renewcommand{\u}{\mathfrak{u}}
\newcommand{\gl}{\mathfrak{gl}}
\newcommand{\x}{{\bf x}}
\newcommand{\y}{{\bf y}}
\newcommand{\U}{\mathsf{U}}
\newcommand{\fn}{\mathsf{n}}
\newcommand{\hes}{\mathsf{h}}
\newcommand{\graph}{\mathsf{G}}
\newcommand{\EE}{\mathsf{E}}
\DeclareMathOperator{\rk}{rank}
\DeclareMathOperator{\spn}{span}
\title{Counting $\FF_q$-points of orbital varieties in ad-nilpotent ideals of type $A_n$}
 \author[M. Bardestani]{Mohammad Bardestani}
 \address{ John Abbott College, 275 Rue Lakeshore, Sainte-Anne-de-Bellevue, QC H9X 1S2, Canada}
 \email{mohammad.bardestani@gmail.com}
 \author[K. Mallahi-Karai]{Keivan Mallahi-Karai}
 \address{Constructor University, Campus Ring I, 28759 Bremen, Germany.}
 \email{kmallahika@constructor.university }
 \author[S. Ram]{Samrith Ram}
 \address{Indraprastha Institute of Information Technology Delhi, New Delhi, India.}
 \email{samrithram@gmail.com} 
 \author[H. Salmasian]{Hadi Salmasian}
 \address{Department of Mathematics, University of Ottawa, 585 King Edward, Ottawa, ON K1N
 6N5, Canada.}
 \email{hadi.salmasian@uottawa.ca}
\begin{document}
\subjclass{}
\keywords{Finite fields, Jordan form, Macdonald polynomials, $q$-Hermite polynomials, Hessenberg functions, chromatic quasisymmetric polynomials}

\begingroup
\def\uppercasenonmath#1{} 
\let\MakeUppercase\relax 
\maketitle
\endgroup

{
\footnotesize
\hypersetup{linkcolor=RedDef}
\tableofcontents
}

\begin{abstract}

Let $\mathfrak b_n(\FF_q)$  denote the Lie algebra of upper triangular $n\times n$ matrices with entries in the finite field $\FF_q$. For every $\mathfrak b_n(\FF_q)$-stable ideal $\mathfrak a$ of the Lie algebra $\u_n(\FF_q)$ of strictly upper triangular matrices, and every partition $\mu$ of $n$, we prove two explicit formulas for the number of elements of $\mathfrak a$ of Jordan type $\mu$: the first one is given 
by the Hall scalar product of a modified Hall-Littlewood function indexed by $\mu$ and a  chromatic quasisymmetric function associated to $\mathfrak a$. The second one is  given in terms of  a sum of products of $q$-integers over certain standard tableaux that are compatible with a partial order associated to $\mathfrak a$.
  
In the special case that $\mathfrak a=\u_\Lambda(\FF_q)$, where 
$\u_\Lambda(\FF_q)$ is the nilradical of the standard parabolic subalgebra of $\gl_n(\FF_q)$ associated to a composition $\Lambda$ of $n$, our formulas reduce to the statement (recently proved in~\cite[Theorem 5.13]{Karp-Thomas}) that 
up to an explicit polynomial factor in $q$, the number of elements in $\u_\Lambda(\FF_q)$ of Jordan type $\mu$ is equal to the coefficient of the monomial $\x^\Lambda$ in the specialization of the dual Macdonald symmetric function $\Q_{\mu'}(\x;q^{-1},t)$ at $t=0$.
We obtain a new and shorter  proof of the latter assertion
using a parabolic variation of Borodin's division algorithm~\cite{Borodin,Kirillov}.

We give three applications of our main result stated above. 
First, we obtain an analogous formula for the number of points of a nilpotent Hessenberg variety. 
Second, we give an explicit formula for the number of  $X\in \u_\Lambda(\FF_q)$ that satisfy $X^2=0$. In the special case $\Lambda=(1^n)$, our formula is different from the one conjectured by Kirillov and Melnikov~\cite{KirillovMelnikov} (and proved by Ekhad and Zeilberger~\cite{Ekhad}). However, we obtain a new proof of the Kirillov-Melnikov-Ekhad-Zeilberger 
formula
using our latter result and a formula for two-rowed Macdonald polynomials by Jing and J\'{o}zefiak~\cite{Jing}.
As a byproduct, we demonstrate that a recurrence relation of Kirillov in~\cite{KirillovX=0} for the number of strictly upper triangular matrices of Jordan type $\mu$ is a straightforward consequence of our main results combined with the Cauchy identity for Macdonald polynomials.
Third, we give an explicit formula for the number of double cosets $\U_1\backslash\GL_n(\FF_q)/\U_2$ where $\U_1$ and $\U_2$ denote unipotent subgroups of $\GL_n(\FF_q)$ that correspond to two $\mathfrak b_n(\FF_q)$-stable ideals of $\u_n(\FF_q)$. 
\end{abstract}

\section{Introduction}

Let $\U_n(\mathbb{F}_q)$ be the group of $n \times n$ upper-triangular matrices over the finite field $\mathbb{F}_q$ with all the diagonal entries equal to 1, and let $\mathfrak{u}_n(\mathbb{F}_q)$ denote the Lie algebra of $n \times n$ upper-triangular nilpotent matrices over $\mathbb{F}_q$. In an influential and elegant paper 
\cite{Kirillov}, Kirillov initiated a comprehensive study of the orbit method for $\U_n(\mathbb{F}_q)$, establishing various connections between the adjoint and coadjoint orbit structures of 
$\U_n(\mathbb{F}_q)$ and
$\mathfrak{u}_n(\mathbb{F}_q)$ 
and representation theory. 
In connection to these ideas, Kirillov posed the following question:
\medskip
\begin{quote}
Given a partition $\mu$ of $n$, count the number of  $X\in \mathfrak{u}_n(\mathbb{F}_q)$ of Jordan type  $\mu$.
\end{quote}
\medskip

This problem was investigated by Kirillov himself and revisited later on by Borodin~\cite{Borodin}, who used  a technique called  ``the division algorithm'' to provide a recursive formula for the number of these matrices. Borodin applied this formula to study the asymptotic behavior of the Jordan type of strictly upper triangular $n \times n$ matrices as $n \to \infty$. 
\smallskip

One can think of  $\u_n(\FF_q)$ as the nilradical of the standard Borel subalgebra of $\mathfrak{gl}_n(\FF_q)$. Then Kirillov's question is about the number of points of the (not necessarily irreducible) variety obtained as  the intersection of $\u_n(\FF_q)$ and a nilpotent orbit of $\mathfrak{gl}_n(\FF_q)$. At least over $\mathbb C$, such varieties are sometimes called \emph{orbital varieties}; see for example~\cite{JOSEPH}.
 From this viewpoint it is natural to ask Kirillov's question for nilradicals of other parabolic subalgebras,
or even more generally, for any ideal of $\u_n(\FF_q)$ that is stable under the action of the standard Borel subalgebra 
$\mathfrak b_n(\FF_q)$ 
of $\mathfrak{gl}_n(\FF_q)$. The latter ideals are sometimes simply called \emph{ad-nilpotent ideals} of $\u_n(\FF_q)$. Their interaction with nilpotent orbits  has attracted attention (for example see~\cite{ConciniLusztigProcesi,CelliniPapi,MR4334164}). Our primary goal in this paper is to answer this more general form of Kirillov's  question (see~\cref{mainthm:Fmula-comb,Intro_them: Main_Macdonald,Hess_theorem}). Subsequently, we give various applications of the latter theorems.
\smallskip

Recall that a composition $\Lambda$ of $n$ is a finite tuple of positive integers whose sum equals $n$. 
When the parts of the composition form a weakly decreasing sequence, we call it  a partition.
The standard parabolic subgroup of $\G = \GL_n(\mathbb{F}_q)$ associated to $\Lambda$ is denoted by $\P_\Lambda = \P_\Lambda(\mathbb{F}_q)$, and the  unipotent radical 
of $\P_\Lambda$
is denoted by $\mathrm{U}_\Lambda = \mathrm{U}_\Lambda(\mathbb{F}_q)$. The 
nilradical of the parabolic subalgebra of $\mathfrak{gl}_n(\FF_q)$ associated to $\Lambda$ will be denoted by  $\u_\Lambda=\u_\Lambda(\FF_q)$. 
For instance when $\Lambda=(2,3,1)$ we have:
$$
\mathrm{P}_\Lambda=
\scalebox{0.8}{$
  \begin{pNiceMatrix}[left-margin=0.6em, right-margin=0.6em]
    \Block[fill=gray]{2-2}{} 
    \ast & \ast & \ast & \ast & \ast & \ast  \\
    \ast & \ast & \ast & \ast & \ast & \ast  \\
    0 & 0 & \Block[fill=gray]{3-3}{} \ast & \ast & \ast & \ast  \\
    0 & 0 & \ast &  \ast & \ast & \ast  \\
    0 & 0 & \ast & \ast & \ast & \ast  \\
    0 & 0 & 0 & 0 & 0 & \Block[fill=gray]{1-1}{} \ast 
\end{pNiceMatrix}
$}, \,\,
\mathrm{U}_\Lambda=
\scalebox{0.8}{$
\begin{pNiceMatrix}[left-margin=0.6em, right-margin=0.6em]
    \Block[fill=gray]{2-2}{} 
    1 & 0 & \ast & \ast & \ast & \ast  \\
    0 & 1 & \ast & \ast & \ast & \ast  \\
    0 & 0 & \Block[fill=gray]{3-3}{} 1 & 0 & 0 & \ast  \\
    0 & 0 & 0 &  1 & 0 & \ast  \\
    0 & 0 & 0 & 0 & 1 & \ast  \\
    0 & 0 & 0 & 0 & 0 & \Block[fill=gray]{1-1}{} 1
\end{pNiceMatrix}
$}, \,\,
\u_\Lambda=
\scalebox{0.8}{$
\begin{pNiceMatrix}[left-margin=0.6em, right-margin=0.6em]
    \Block[fill=gray]{2-2}{} 
    0 & 0 & \ast & \ast & \ast & \ast  \\
    0 & 0 & \ast & \ast & \ast & \ast  \\
    0 & 0 & \Block[fill=gray]{3-3}{} 0 & 0 & 0 & \ast  \\
    0 & 0 & 0 &  0 & 0 & \ast  \\
    0 & 0 & 0 & 0 & 0 & \ast  \\
    0 & 0 & 0 & 0 & 0 & \Block[fill=gray]{1-1}{} 0
\end{pNiceMatrix} 
$},
$$
where the $*$'s represent arbitrary elements of $\FF_q$.
For $\Lambda=(1^n)$, we obtain $\u_\Lambda=\u_n(\FF_q)$. 
\smallskip

There is a simple yet quite useful characterization of the ad-nilpotent  ideals of $
\u_n(\FF_q)$, connecting them to Dyck paths.  
To give this characterization, we need the notion of a Hessenberg function. 
\begin{definition}[Hessenberg function]
A Hessenberg function is a non-decreasing function $\hes \colon [n] \to [n]$ such that $\hes(i) \geq i$ for every $i \in [n]$, where $[n] := \{1, \dots, n\}$. The set of all Hessenberg functions on $[n]$ will be denoted by $\mathcal{H}_n$. We sometimes identify a Hessenberg function by the $n$-tuple $(\hes(1),\ldots,\hes(n))$. 
\end{definition}
 One can naturally associate a graph to $\hes$ with vertex set $[n]$ and set of edges $E = \{\{i, j\}: i < j \leq \hes(i)\}$. This graph, known as the \emph{indifference graph} of $\hes$, will be denoted by $\graph(\hes)$. A Hessenberg function $\hes\in\mathcal{H}_n$ also produces a partial order on $[n]$ by declaring $i\prec_\hes j$ if and only if $\hes(i)<j$. We denote the poset $([n],\preceq_\hes)$ by $\mathcal{P}_\hes$. Associated to $\hes$, we also define the following Lie subalgebra of $\u_n(\FF_q)$: 
\begin{equation}\label{hess_subalg}
    \u_\hes = \u_\hes(\FF_q) \defin \{(a_{ij}) \in \u_n(\FF_q) : a_{ij} = 0 \mbox{ for }  j \leq \hes(i)\}.
  \end{equation}

  \begin{example}\label{eg:hess}
For $\hes\in\mathcal H_n$ where 
$\hes=(2,3,5,5,5)$ we have 
$$
\u_\hes=\left\{\small
\begin{pmatrix}
    0 & 0 & * & * & *\\
    0 & 0 & 0 &  * & *\\
    0 & 0 & 0 & 0 & 0\\
    0 & 0 & 0 & 0 & 0\\
    0 & 0 & 0 & 0 & 0
\end{pmatrix}: *\in \FF_q
\right\}, \qquad  
\raisebox{-2em}{
\begin{tikzpicture}[scale=0.6]

  \foreach \i/\x in {1/0, 2/2, 3/4, 4/6, 5/8} {
    \node[fill=black, circle, inner sep=1.5pt] (n\i) at (\x,0) {};
    \node[below=2pt] at (n\i) {\i};
  }

  \draw (n1) -- (n2);
  \draw (n2) -- (n3);
  \draw (n3) -- (n4);
  \draw (n4) -- (n5);
  \draw (n3) to[bend left=30] (n5);

  \node[draw=none, fill=none] at (4,-1.6) {$\graph(\hes)$};

\end{tikzpicture}
}.
$$
  \end{example}
The following assertion is well-known and straightforward to prove.

\begin{proposition}  
  The assignment $\hes\mapsto\u_\hes$ is a bijective correspondence between $\mathcal{H}_n$ and ad-nilpotent ideals of $\u_n(\FF_q)$. 
\end{proposition}  
Throughout the paper, we denote $\sum_{i=1}^n\hes(i)$ by $|\hes|$. 
The (non-commutative) product of $\hes_1 \in \mathcal{H}_n$ and $\hes_2 \in \mathcal{H}_m$ is defined by
$$
\hes=\hes_1\sqcup \hes_2\in\mathcal{H}_{n+m}\qquad 
\hes(i)=\begin{cases}
\hes_1(i) & 1\leq i\leq n;\\
n+\hes_2(i-n) & n+1\leq i\leq n+m.
\end{cases}
$$
The Hessenberg function $k_n = (n, \dots, n) \in \mathcal{H}_n$ is called the complete Hessenberg function of size $n$. To each composition $\Lambda=(\Lambda_1,\dots,\Lambda_s)$ of $n$ we assign the Hessenberg function 
\begin{equation}\label{k_partiton}
k_\Lambda\defin k_{\Lambda_1} \sqcup k_{\Lambda_2} \sqcup \cdots \sqcup k_{\Lambda_s},
\end{equation}
and we note that $\u_{k_\Lambda}=\u_\Lambda$.

For any $k\geq 1$ let $J_k$ be the  $k\times k$ nilpotent (upper triangular) Jordan block.
Given a partition  $\mu=(\mu_1,\dots,\mu_k)$ of $n$, we use $J_\mu$ to denote the square block-diagonal matrix of size $|\mu|=\mu_1+\cdots+\mu_k$  with diagonal blocks $J_{\mu_1},\ldots,J_{\mu_k}$.  We say that the Jordan type of a matrix is $\mu$ when its Jordan form is $J_\mu$. 


 \begin{definition}[Jordan type counting function]
Let $\mu$ be a partition of $n$.  
For a given composition $\Lambda$ of $n$ we set
\begin{equation}\label{Flambda}
\F_{\mu\Lambda}(q)\defin
| \C_\mu\cap \mathfrak u_\Lambda(\FF_q)|,
\end{equation}
where $\C_\mu=\C_\mu(\FF_q)$ is the $\GL_n(\FF_q)$-conjugacy class of $J_\mu$.
Similarly, for a Hessenberg function $h\in\mathcal{H}_n$ we set 
\begin{equation}\label{Hess_jordan}
    \F_{\mu\hes}(q)\defin |\mathcal{C}_\mu\cap \u_\hes(\FF_q)|.
\end{equation}

 \end{definition} 

\subsection{Main results}
\label{subsec:mainthm}
In order to state our first main result (see ~\cref{Hess_theorem}), we start by reviewing the definition of  the chromatic quasisymmetric functions introduced in an influential paper by Shareshian and Wachs~\cite{Shareshian-Wachs}.
Let $\graph = (V, E)$ be a graph with $V$ a finite subset of $\mathbb{Z}_{\geq 1}$. Given a subset $S$ of $\mathbb{Z}_{\geq 1}$, a proper $S$-coloring of $\graph$ is a function $\kappa \colon V \to S$ such that $\kappa(i) \neq \kappa(j)$ whenever $\{i,j\} \in E$. Let $\mathcal{C}(\graph)$ be the set of proper $\mathbb{Z}_{\geq 1}$-colorings of $\graph$. Stanley~\cite{Stanley_symmetric} defined the chromatic symmetric function of $\graph$ as
\[
X_\graph(\x) := \sum_{\kappa \in \mathcal{C}(\graph)} \prod_{v \in V} x_{\kappa(v)},
\]
where $\x=(x_1,x_2,\ldots)$. The \emph{chromatic quasisymmetric function} of $\graph = (V, E)$ is a $q$-deformation of Stanley's chromatic symmetric function defined in \cite{Shareshian-Wachs} by
\begin{align*}
  X_\graph(\x;q):=\sum_{\kappa \in \mathcal{C}(\graph)} q^{\mathrm{asc}({\kappa})}\prod_{v \in V} x_{\kappa(v)},
\end{align*}
where $\mathrm{asc}(\kappa):=\#\{\{i,j\}\in E(\graph):i<j \text{ and }\kappa(i)<\kappa(j)\}$,
is the number of ascents of the coloring $\kappa$. The chromatic quasisymmetric function is not symmetric in general. However, for the graphs $\graph(\hes)$, where $\hes\in\mathcal{H}_n$, Shareshian and Wachs~\cite[Theorem 4.5]{Shareshian-Wachs} show that $X_{\graph(\hes)}(\x,q)$ is a  symmetric function with coefficients in $\mathbb{Z}[q]$. Therefore, $X_{\graph(\hes)}(\x,q)$ can be expressed in terms of the elementary symmetric functions:
$$
X_{\graph(\hes)}(\x;q)=\sum_{\lambda\,\vdash n}c_{\lambda\hes}(q)e_\lambda(\x). 
$$
An explicit formula for the coefficients $c_{\lambda\hes}(q)$ was recently given by Hikita \cite[Theorem 1.7]{hikita2024}.

Recall that the length of a partition 
$\mu$, denoted 
by $\ell(\mu)$, is the number of its parts.
 It is convenient to define $\mu_i = 0$ for all $i >\ell(\mu)$.  The conjugate of $\mu$, denoted by $\mu'=(\mu_1',\dots, \mu_l')$,  is defined by 
$$\mu'_j=\#\{1\leq i\leq k: \mu_i\geq j\}.$$
 Given partitions $\mu$ and $\nu$ of $n$, we say that $\nu$ ``dominates'' $\mu$, and we write $\mu \less \nu$, if and only if
$$
\mu_1+\mu_2+\dots+\mu_i\leq\nu_1+\nu_2+\dots+\nu_i, \quad\text{for } i\geq 1.$$
  Assigned to each partition $\mu$ we define 
\begin{equation}\label{n_mu:dif}
\fn(\mu)\defin \sum_{i\geq 1}(i-1)\mu_i=\sum_{i\geq 1}\binom{\mu_i'}{2}. 
\end{equation}

Let  $\tilde{\HH}_\mu(\x;t)$ denote the \emph{modified Hall-Littlewood function} indexed by $\mu$,  obtained from the modified Macdonald polynomials of~\cite{HaglundHaimanLoehr} by specialization at $q=0$. More concretely, if  $J_\mu(\x;q,t)$ is the integral form of the Macdonald polynomial~\cite[p.~352, Eq.~(8.3)]{Macdonald} then
\[
\tilde{\HH}_\mu(\x;t)=t^{\fn(\mu)}
J_\mu\left[
\frac{\x}{1-t^{-1}};0,t^{-1}
\right].
\]
For a non-negative integer $n$ we adopt the following conventions:
\[
[0]_q\defin 0,\quad [n]_q \defin \frac{q^n-1}{q-1},\ \  [0]_q!\defin 1, \ \  [n]_q!\defin [1]_q\cdots [n]_q, \ \  {n \brack k}_q\defin \frac{[n]_q!}{[k]_q![n-k]_q!}. 
\]
Let $\cen(I+J_\mu)$ denote the centralizer of $I + J_\mu$ in $\G$. By~\cite[p. 181, Eq. (1.6)]{Macdonald} we have 
\begin{equation}\label{center_size1}
\left|\cen(I+J_\mu)\right|=q^{n+2\fn(\mu)}\prod_{i\geq 1}\varphi_{m_i(\mu)}\left(1/q\right),
\end{equation}
where $\varphi_m(t)=(1-t)(1-t^2)\cdots(1-t^m)$ and $m_i(\mu)=\mu_i'-\mu'_{i+1}$. Indeed
\begin{equation}\label{varphi_m}
\varphi_m\left(1/q\right)=[m]_q!(q-1)^mq^{-\binom{m}{2}-m}.
\end{equation}

We are now able to state the first main result of this paper. Henceforth we denote the Hall scalar product~\cite[Sec. I.4]{Macdonald} of symmetric functions by $\langle\cdot,\cdot\rangle$. 
 \begin{theorem}\label{Hess_theorem} Let $\hes\in \mathcal{H}_n$ be a Hessenberg function and let $\mu$ be a partition of $n$. Then $\F_{\mu\hes}(q)\in \ZZ[q]$ is a polynomial in $q$ and 
\begin{equation}\label{F_muh_Hall_inner}
      \F_{\mu \hes}(q)=|\mathfrak{u}_\hes|\sum_{\lambda\,\vdash n}\frac{c_{\lambda \hes}(q)}{[\lambda]_q!}\frac{\mathrm{F}_{\mu\lambda}(q)}{|\mathfrak{u}_\lambda|}=\frac{q^{n^2-|\hes|} (q-1)^n}{|\cen(I+J_\mu)|}\langle \tilde{\HH}_\mu(\x;q),X_{\graph(\hes)}(\x;q)\rangle.
\end{equation}    
\end{theorem}

The strategy of the proof of~\cref{Hess_theorem} is to reduce the computation of  $\F_{\mu\hes}(q)$ to the special case $\F_{\mu\lambda}(q)$ using  the \emph{modular law} of Abreu and Nigro~\cite{Abreu-Nigro} (see~\cref{subsec:modular}), and then use the fact that $\F_{\mu\lambda}(q)$ can be expressed in terms of monomial coefficients of the $q$-Whittaker functions $\P_{\mu'}(\x;1/q,0)$ (see \cref{Intro_them: Main_Macdonald}).
We remark that \cref{Intro_them: Main_Macdonald} has also been proved recently by Karp and Thomas~\cite[Theorem 5.13]{Karp-Thomas}.  
In~\cite{Karp-Thomas}, 
the authors extract~\cref{Intro_them: Main_Macdonald} from a  combination of results, including a probabilistic bijection between non-negative integer matrices and pairs of semistandard Young tableaux of the same shape, which they call the $q$-Burge correspondence. 
Our proof of \cref{Intro_them: Main_Macdonald}
is more direct: first we obtain a recursive formula for $\F_{\mu\lambda}(q)$ with explicit coefficients (see~\cref{recursive_theortem}). 
Then we solve this recursion by a simple iterative substitution. This leads in a natural way to a solution for the recursion in terms of the well-known tableau product formulas that occur in the coefficients of Macdonald polynomials!

As usual, to  any $\mu=(\mu_1,\dots,\mu_k)$ we associate a Young diagram of shape $\mu$: a left-justified shape of $k$ rows of boxes, where the $i$-th row from the top has  $\mu_i$ boxes for $1\leq i\leq k$. 
 A \emph{standard} Young tableau is a filling of the Young diagram
of $\mu$
by $\{1,\ldots,|\mu|\}$
with strictly increasing rows and columns.  A \emph{semi-standard} Young tableau of shape $\mu$ is 
a filling of this Young diagram by positive integers  
with 
weakly increasing rows and strictly increasing columns. 
A semi-standard Young tableau with shape $\mu$ has \emph{content} $\Lambda$ if there are $
\Lambda_i$ occurrences of $i$ for every $i
\geq 1$.  The number of semi-standard Young tableaux with shape $\mu$ and content $\Lambda$ is called a  \emph{Kostka number} and it is denoted by $\K_{\mu\Lambda}$.

Our second main result is a tableau formula for $\F_{\mu\hes}(q)$ using a statistic introduced by \cite{CarlssonMellit,BasuBhattacharya}. We use the same notation as~\cite{BasuBhattacharya}. Let $\syt^\hes_\mu$ be the set of standard Young tableaux of shape $\mu$ such that if $i$ is above $j$ in the same column, then $i \prec_\hes j$.  For $T \in \syt^\hes_\mu$ and a box $b \in T$, let $\mathrm{coleg}(b)$ (resp.\ $\mathrm{arm}(b)$) be the number of boxes strictly above $b$ in its column (resp.\ strictly to its right in its row). The entry in $b$ is denoted by $T(b)$.
For $T \in \syt^\hes_\mu$ and a box $b \in T$, let $\gamma(T,b)$ be the number of boxes $c$ in any row above $b$ such that $T(c) < T(b)$ but $T(c) \nprec_\hes T(b)$. Set $\gamma(T,b)=0$ if $\mathrm{coleg}(b)=0$. Equivalently, $\gamma(T,b)$ counts entries in any row above $b$ that are smaller in the usual order on $\ZZ$ but not in the partial order generated by $\hes$. Define
 $\gamma(T):=\sum_{\mathrm{coleg}(b)\ge 1} \gamma(T,b)$.
If $\mathrm{coleg}(b)\ge 1$, let $\mathrm{up}(b)$ denote the box directly above $b$. We now consider the following polynomial in $q$: 
$$
\underline\F_{\mu\hes}(q)\defin \sum_{T \in \syt^\hes_\mu}  
		q^{\gamma(T)} \prod_{\substack{b \in T \\ \mathrm{coleg}(b)\geq 1}} [\mathrm{arm}_\hes(\mathrm{up}(b),T(b))+1]_q
$$
where $\mathrm{arm}_\hes(\mathrm{up}(b),T(b))$ is the number of boxes $c$ on the right side  of $\mathrm{up}(b)$ in the same row of $\mathrm{up}(b)$ such that $T(c)\prec_\hes T(b)$. Note that the coefficients of $\underline\F_{\mu\hes}(q)$ are non-negative integers. 

\begin{theorem}\label{Hess_theorem_tableaux} Let $\hes\in \mathcal{H}_n$ be a Hessenberg function and let $\mu$ be a partition of $n$. Then \[
\F_{\mu\hes}(q)
=
q^{\binom{n}{2}-\fn(\mu)-(n-\ell(\mu))}(q-1)^{n-\ell(\mu)}\underline\F_{\mu'\hes}(1/q).
\]
\end{theorem}

It is interesting to determine for which $\mu$ and $\hes$ we have  $\F_{\mu\hes}(q)\neq 0$. In~\cref{Hessenberg_non-zero}, we address this problem and indeed prove a stronger result over an arbitrary field.  Let $\mathcal{P}$ be any poset of cardinality $n$ and let $c_k(\mathcal{P})$ denote the maximum possible cardinality of a union $\bigcup_{i=1}^k C_i$ where each $C_i\, (1\leq i\leq k)$ is a chain in $\mathcal{P}$. For each $i\geq 1$, define integers $\lambda_i$ by $c_k(\mathcal{P})=\lambda_1+\cdots +\lambda_k$ for $k\geq 1$. Then it can be shown that the sequence $\lambda_i\, (i\geq 1)$ is weakly decreasing and forms a partition of $n$ (see Britz and Fomin~\cite[Theorem 2.1]{MR1814900}) called the Greene-Kleitman shape of $\mathcal{P}$.  
Denote by $\lambda_\mathcal{P}$ the Greene-Kleitman shape of $\mathcal{P}$ and write $\lambda_\hes$ for $\lambda_{\mathcal{P}_\hes}$ when $\hes\in\mathcal{H}_n$ is a Hessenberg function. 
Our third main result is the following.
\begin{theorem}\label{Hessenberg_non-zero} Let $\FF$ be an arbitrary field, and let $\hes \in \mathcal{H}_n$ be a Hessenberg function with Greene–Kleitman shape $\lambda_\hes$. 
Let $\mathcal C_\mu=\mathcal C_\mu(\FF)$ denote the $\GL_n(\FF)$-conjugacy class of $J_\mu$. 
Then, for any partition $\mu$ of $n$, we have $\mathcal{C}_\mu \cap \u_\hes(\FF) \neq \emptyset$ if and only if $\mu \less \lambda_\hes$.
\end{theorem}
\begin{remark}\label{Hesenberg_partition} Let $\lambda$ be a partition of $n$. One can easily verify that the Greene-Kleitman shape of the poset generated by the Hessenberg function $k_\lambda$, defined in~\cref{k_partiton}, is $\lambda'$.   
\end{remark}  
 
We remark that for $\FF=\mathbb C$,~\cref{Hessenberg_non-zero} can be deduced from~\cite{MR4334164}. Our proof is completely different and works over a general field. 
The following assertion is an immediate consequence of~\cref{Hess_theorem,Hessenberg_non-zero}.
\begin{corollary}
\label{cor:n-ell}
 Let $\hes\in\mathcal{H}_n$ be a Hessenberg function and let $\mu$ be a partition of $n$ with $\mu\less \lambda_\hes$. Then the exact exponent of $q-1$ in $\F_{\mu\hes}(q)$ is $n-\ell(\mu)$.
\end{corollary}

\begin{remark} \Cref{Table1} contains the explicit computation of $\F_{\mu\hes}(q)$ when $\hes = (1,3,5,6,7,7,7)$.
\begin{table}[h!]\scriptsize
\renewcommand{\arraystretch}{1.5} 
\centering
\renewcommand{\arraystretch}{1.5} 
\centering
\begin{tabular}{|c|c|}
\hline
\text{$\mu$} & $\F_{\mu\hes}(q)$ \\
\hline
(7) & 0 \\
\hline
(6, 1) & 0 \\
\hline
(5, 2) & 0 \\
\hline
(5, 1, 1) & 0 \\
\hline
(4, 3) & 0 \\
\hline
(4, 2, 1) & $(q - 1)^4 q^9$ \\
\hline
(4, 1, 1, 1) & $(q - 1)^3 q^9$ \\
\hline
(3, 3, 1) & $(q - 1)^4 q^8$ \\
\hline
(3, 2, 2) & $(q + 1)^2 (q - 1)^4 q^6$ \\
\hline
(3, 2, 1, 1) & $(q^3 + 5q^2 + 4q + 1)(q + 1)(q - 1)^3 q^5$ \\
\hline
(3, 1, 1, 1, 1) & $(q + 1)^3 (q - 1)^2 q^5$ \\
\hline
(2, 2, 2, 1) & $(q^4 + 6q^3 + 7q^2 + 4q + 1)(q - 1)^3 q^3$ \\
\hline
(2, 2, 1, 1, 1) & $(q^6 + 5q^5 + 10q^4 + 10q^3 + 7q^2 + 3q + 1)(q - 1)^2 q$ \\
\hline
(2, 1, 1, 1, 1, 1) & $(q^5 + 2q^4 + 4q^3 + 3q^2 + 2q + 1)(q - 1)$ \\
\hline
(1, 1, 1, 1, 1, 1, 1) & 1 \\
\hline
\end{tabular}
    \caption{\small Calculation of $\F_{\mu\hes}$ when $\hes=(1,3,5,6,7,7,7)$}
    \label{Table1}
\end{table}
\end{remark}

\subsection{The intermediate special case of $\F_{\mu\lambda}(q)$}
Recall from~\cref{subsec:mainthm} that the computation of $\F_{\mu\hes}(q)$ is reduced to the computation of $\F_{\mu\lambda}(q)$, which is achieved in ~\cref{mainthm:Fmula-comb} and ~\cref{Intro_them: Main_Macdonald}. In this subsection, we describe the latter theorem and its corollaries. 

For partitions $\nu $ and $\mu$, we  write $\nu\subseteq \mu$ to mean that the Young diagram of $\mu$ contains the Young diagram of $\nu$, i.e., $\nu_i\leq \mu_i$ for all $i\geq 1$. The set-theoretic difference of the Young diagrams of $\mu$ and $\nu$, denoted by $\mu/\nu$, is called a \emph{skew diagram}. 
A \emph{horizontal $k$-strip} is a skew diagram with $k$ boxes with no two squares in the same column.  For instance the skew diagram 
$(5,4,4,1)/(4,4,2)$ is a $4$-horizontal strip:
\smallskip
\begin{center}
\scriptsize
\begin{ytableau}
{} &  & & & *(gray) \\
 & & &   \\
  & & *(gray)  & *(gray)  \\
 *(gray) 
\end{ytableau}
\end{center}
For a composition $\Lambda$ of $n$ we use
$\mathrm{sort}(\Lambda)$ to denote the partition obtained by sorting the parts of $\Lambda$ in decreasing order.

\begin{definition}\label{theta:dif}
Let $\lambda$ and $\mu$ be partitions of $n$ with $\ell(\lambda)=s$. We  define 
\[
b_{\mu\lambda}(q)\defin\sum_{\mathcal{F}}\prod_{j=1}^{s}\theta_{\mu^j/\mu^{j-1}}(q),
\]
with
\begin{equation}\label{theta:eq}
\quad \theta_{\eta/\rho}(q)\defin\frac{\left[|\eta|-|\rho|\right]_q!}{\left[\eta_1-\rho_1\right]_q!}\, \prod_{i\geq 1}{\rho_i-\rho_{i+1} \brack \eta_{i+1}-\rho_{i+1}}_q,
\end{equation}
where $\eta/\rho$ is a horizontal strip and $\mathcal{F}:=\mathcal{F}(\mu,\lambda)$ 
is the set of all chains of partitions 
$\emptyset=\mu^{0}\subseteq \mu^{1}\subseteq \cdots\subseteq \mu^{s}=\mu$,
such that $\mu^{j}/\mu^{j-1}$ is a horizontal $\lambda_j$-strip.
By convention, 
$b_{\mu\lambda}(q)=0$ when $\mathcal F=\emptyset$.
Note  that in~\cref{theta:eq} we have $\rho_i-\rho_{i+1}\geq \eta_{i+1}-\rho_{i+1}$ because $\eta/\rho$ is a horizontal strip.
\end{definition}

\begin{theorem}\label{mainthm:Fmula-comb} 
Let $\Lambda$ be a composition of $n$ and set $\lambda = \mathrm{sort}(\Lambda)$. Let $\mu$ be a partition of $n$.
Then  
$\F_{\mu\Lambda}(q)\neq 0$ 
if and only if $\lambda \less \mu'$.
 When $\lambda \less \mu'$, the following hold:
\begin{enumerate} \setlength{\itemsep}{5pt}
\item $
\label{Fmula-comb:Formula}
\F_{\mu \Lambda}(q) = \F_{\mu \lambda}(q)=(q-1)^{n-\ell(\mu)}q^{\binom{n}{2}-\fn(\mu)-(n-\ell(\mu))}\,\, b_{\mu'\lambda}(1/q)$.

\item  $\F_{\mu \lambda}(q)\in \ZZ[q]$,  the degree of 
$\F_{\mu \lambda}(q)$
is $\binom{n}{2} - \fn(\mu)$, and its leading coefficient is $\K_{\mu' \lambda}$, the Kostka number of shape $\mu'$ and content $\lambda$.

\item There exists a polynomial  
$\R_{\mu\lambda}(q)\in\mathbb Z[q]$ such that $\R_{\mu\lambda}(0)=1$, $\R_{\mu\lambda}(1)> 0$, and  
\begin{equation}\label{Into:eq_Fuchs-Kirillov}
\F_{\mu\lambda}(q)=(q-1)^{n-\ell(\mu)}q^{\binom{n}{2}-\binom{\ell(\mu)}{2}-\sum_{j\geq 1}\mu_j'\mu_{j+1}'}\, \R_{\mu\lambda}(q).
\end{equation}

\end{enumerate}

\end{theorem}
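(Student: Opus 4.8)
\textit{Plan.} The plan is to run a parabolic analogue of Borodin's division algorithm to obtain a one-step recursion for $\F_{\mu\Lambda}(q)$, to identify this recursion with the chain recursion defining $b$, and then to read off the degree, leading term and lowest term from the resulting closed formula. Concretely, write $\Lambda=(\Lambda_1,\dots,\Lambda_r)$, put $k=\Lambda_r$, $N=n-k$, and split $\FF_q^n=W\oplus U$ with $W=\spn(e_1,\dots,e_N)$ and $U=\spn(e_{N+1},\dots,e_n)$. Every $X\in\u_\Lambda(\FF_q)$ has block form $X=\bigl(\begin{smallmatrix}\bar X&\phi\\0&0\end{smallmatrix}\bigr)$ with $\bar X\in\u_{\Lambda^-}(\FF_q)$ acting on $W$ (where $\Lambda^-=(\Lambda_1,\dots,\Lambda_{r-1})$) and $\phi\in\mathrm{Hom}(U,W)$ arbitrary. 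From $X^j=\bigl(\begin{smallmatrix}\bar X^j&\bar X^{j-1}\phi\\0&0\end{smallmatrix}\bigr)$ ($j\geq1$) one computes $\dim\ker X^j=\dim\ker\bar X^j+\dim\phi^{-1}(W_{j-1})$, where $W_m\defin\mathrm{im}\,\bar X+\ker\bar X^m$ is a flag in $W$ with $\dim W_m=N-\nu'_{m+1}$, $\nu$ being the Jordan type of $\bar X$. Writing $\mu$ for the type of $X$, this reads $\sum_{i\leq j}(\mu'_i-\nu'_i)=\dim\phi^{-1}(W_{j-1})$; the inclusions $W_0\subseteq W_1\subseteq\cdots$ force $\mu'_i\geq\nu'_i$, while $\phi^{-1}(W_{j-1})/\phi^{-1}(W_{j-2})\hookrightarrow W_{j-1}/W_{j-2}$ forces $\mu'_j\leq\nu'_{j-1}$, so $\mu'/\nu'$ must be a horizontal $k$-strip, and conversely every such $\mu$ occurs. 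Moreover the number of $\phi$ realizing a fixed pair $(\mu,\nu)$ depends only on the flag dimensions, hence only on $(\mu,\nu,k)$; counting these $\phi$ layer by layer against $(W_m)$ writes it as an explicit product $w_k(\mu,\nu;q)\in\NN[q]$ of Gaussian binomials $\GC{\nu'_m-\nu'_{m+1}}{\mu'_{m+1}-\nu'_{m+1}}$, $q$-factorials and powers of $q$. We obtain the recursion
\begin{equation}\label{prop:rec}
\F_{\mu\Lambda}(q)=\sum_{\nu}w_{\Lambda_r}(\mu,\nu;q)\,\F_{\nu\Lambda^-}(q),
\end{equation}
the sum over partitions $\nu\vdash n-\Lambda_r$ with $\mu'/\nu'$ a horizontal $\Lambda_r$-strip. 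Induction on $r$ then gives $\F_{\mu\Lambda}(q)\in\ZZ[q]$ (polynomiality, half of part (2)), and iterating the strip conditions builds exactly the chains of \cref{chain-size}, so $\C_\mu\cap\u_\Lambda(\FF_q)\neq\emptyset$ iff $\mathrm{sort}(\Lambda)\less\mu'$.

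\textit{Step 2 (matching $b$, and part (1)).} Extend \cref{theta:dif} verbatim to compositions, writing $b_{\mu'\Lambda}(q)$ for the same chain sum over chains $\emptyset\subseteq\mu^1\subseteq\cdots\subseteq\mu^r=\mu'$ with $\mu^j/\mu^{j-1}$ a horizontal $\Lambda_j$-strip; by definition $b_{\mu'\Lambda}(q)=\sum_{\rho}\theta_{\mu'/\rho}(q)\,b_{\rho\Lambda^-}(q)$ over $\rho$ with $\mu'/\rho$ a horizontal $\Lambda_r$-strip. Comparing with \eqref{theta:eq}: the Gaussian binomials in $w_k(\mu,\nu;q)$ are precisely those of $\theta_{\mu'/\nu'}(q)$, the factor $[k]_q!/[\mu'_1-\nu'_1]_q!$ comes out of the outermost layer of the rank count, the inner layers contribute $q$-factorial ratios that telescope, and the remainder is a pure power of $q$ and of $(q-1)$. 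The target identity is
\begin{equation}\label{prop:key}
\begin{aligned}
&w_k(\mu,\nu;q)\,(q-1)^{(n-k)-\ell(\nu)}\,q^{\binom{n-k}{2}+\ell(\nu)-\fn(\nu)-(n-k)}\\
&\qquad =(q-1)^{n-\ell(\mu)}\,q^{\binom n2+\ell(\mu)-\fn(\mu)-n}\,\theta_{\mu'/\nu'}(q^{-1}).
\end{aligned}
\end{equation}
Given \eqref{prop:key}, induction on $r$ (base case $r=0$: $\F_{\emptyset,()}(q)=1=b_{\emptyset\emptyset}$) proves $\F_{\mu\Lambda}(q)=(q-1)^{n-\ell(\mu)}q^{\binom n2+\ell(\mu)-\fn(\mu)-n}\,b_{\mu'\Lambda}(q^{-1})$: substitute the hypothesis into \eqref{prop:rec}, apply \eqref{prop:key}, and recognize the sum as $b_{\mu'\Lambda}(q^{-1})$ via the chain recursion. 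Finally, $b_{\mu'\Lambda}$ depends only on $\mathrm{sort}(\Lambda)$: the chain sum is (a normalization of) the coefficient of $\x^\Lambda$ in $\Q_{\mu'}(\x;q,0)$ — built up by iterated Pieri — and monomial coefficients of a symmetric function are invariant under permuting the exponent. Hence $\F_{\mu\Lambda}(q)=\F_{\mu\lambda}(q)$ and the stated formula holds with $\lambda=\mathrm{sort}(\Lambda)$, proving part (1).

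\textit{Step 3 (parts (2) and (3)).} Two facts about $b_{\mu'\lambda}(q)$ are immediate from \eqref{theta:eq}, using $\deg[m]_q!=\binom m2$, $\deg\GC{a}{b}=b(a-b)$, and that $[m]_q$ equals $1$ at $q=0$ for $m\geq1$: its constant term is $b_{\mu'\lambda}(0)=\#\mathcal{F}(\mu',\lambda)=\K_{\mu'\lambda}$ (each $\theta$ specializes to $1$ at $q=0$ by the horizontal-strip inequalities and \cref{chain-size}), and it has degree $D\defin\ell(\mu)+\binom{\ell(\mu)}{2}-\fn(\mu)-n+\sum_{j\geq1}\mu'_j\mu'_{j+1}$ with leading coefficient $1$ (a single chain attains the top degree, and $q$-factorials and $q$-binomials are monic). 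Also $b_{\mu'\lambda}(1)>0$ since $\mathcal{F}(\mu',\lambda)\neq\emptyset$ and each $\theta$-factor is a positive ratio of factorials times positive binomials. Put $A=\binom n2+\ell(\mu)-\fn(\mu)-n$ and write $b_{\mu'\lambda}(q)=\sum_{i=0}^{D}c_iq^i$. Because $\F_{\mu\lambda}(q)\in\ZZ[q]$ and $(q-1)$ is coprime to $q$, the Laurent polynomial $q^Ab_{\mu'\lambda}(q^{-1})=\sum_ic_iq^{A-i}$ is in fact in $\ZZ[q]$, i.e.\ $A\geq D$ (also checked directly: $A-D=\binom n2-\binom{\ell(\mu)}{2}-\sum_j\mu'_j\mu'_{j+1}=\sum_{i\geq2}\binom{\mu'_i}{2}+\sum_{i<j}\mu'_i\mu'_j-\sum_j\mu'_j\mu'_{j+1}\geq0$). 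Its top term is $c_0q^A=\K_{\mu'\lambda}q^A$ and its bottom term is $c_Dq^{A-D}=q^{A-D}$; hence $\F_{\mu\lambda}(q)=(q-1)^{n-\ell(\mu)}q^Ab_{\mu'\lambda}(q^{-1})$ has degree $(n-\ell(\mu))+A=\binom n2-\fn(\mu)$ and leading coefficient $\K_{\mu'\lambda}$, which is part (2). Setting $\R_{\mu\lambda}(q)\defin q^Db_{\mu'\lambda}(q^{-1})\in\NN[q]$ (the reversal of $b_{\mu'\lambda}$) gives $\R_{\mu\lambda}(0)=c_D=1$, $\R_{\mu\lambda}(1)=b_{\mu'\lambda}(1)\neq0$, and $\F_{\mu\lambda}(q)=(q-1)^{n-\ell(\mu)}q^{A-D}\R_{\mu\lambda}(q)$, which is \eqref{Into:eq_Fuchs-Kirillov} — part (3).

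\textit{Main obstacle.} Everything rests on Step 2, i.e.\ on pinning down the exact form of $w_k(\mu,\nu;q)$ from the layered flag-rank count and checking \eqref{prop:key}: the Gaussian-binomial content matches $\theta_{\mu'/\nu'}$ transparently, but the bookkeeping of the powers of $q$ and of $(q-1)$ produced by the count and by the substitution $q\mapsto q^{-1}$, and the verification that they combine into exactly the stated normalization for every admissible $(\mu,\nu)$, is the delicate part. Step 1 is routine nilpotent linear algebra, and Step 3 is direct manipulation of \eqref{theta:eq}.
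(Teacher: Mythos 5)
Your overall strategy for part (1) is the paper's: a parabolic division algorithm giving the one-step recursion, an explicit count of the admissible blocks $\phi$, and an identification of that count with $\theta_{\mu'/\nu'}$ up to powers of $q$ and $q-1$ so that the recursion collapses onto the chain sum defining $b_{\mu'\lambda}$. Your target identity \eqref{prop:key} is in fact correct (it is equivalent to the combination of \cref{G:Simplification}, \cref{deg_gama:lemma} and \cref{alph_simplify:lemma}), and your derivation of part (2) from it, via $b_{\mu'\lambda}(0)=\K_{\mu'\lambda}$, matches the paper. Your route to $\F_{\mu\Lambda}=\F_{\mu\lambda}$ (invariance of the chain sum under reordering the parts, via symmetry of $q$-Whittaker coefficients) genuinely differs from the paper, which proves \cref{symmetry_prop} by combining a top-down and a bottom-up recursion; your route is viable but needs the telescoping relation between the $\theta$-chain sum and the $\psi$-chain sum (\cref{ab_conection}) to be spelled out. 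However, the proposal as written has real gaps. First, the entire quantitative content is deferred: you never derive the closed form of $w_k(\mu,\nu;q)$ (the paper's \cref{addmissible_number}, resting on the rank-profile count \cref{rank_profile} and on the existence statement \cref{admissible-seq}, which you assert as ``conversely every such $\mu$ occurs''), and you explicitly leave \eqref{prop:key} unverified, calling it bookkeeping. That bookkeeping is the heart of the proof --- in the paper it occupies \cref{G:Simplification}--\cref{Simplify:prop}, including a summation-by-parts computation --- so Steps 1--2 are a correct plan rather than a proof.

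Second, and more seriously, your proof of part (3) rests on the parenthetical claim that $b_{\mu'\lambda}(q)$ is monic of degree $D=\ell(\mu)+\binom{\ell(\mu)}{2}-\fn(\mu)-n+\sum_j\mu'_j\mu'_{j+1}$ because ``a single chain attains the top degree.'' Since every chain contributes a monic polynomial with nonnegative coefficients, this uniqueness claim is exactly equivalent to what you must prove, and you give no argument for it. The degree of a chain's $\theta$-product varies with the chain (only the combination $\sum_j(\alpha_j+\gamma_j)$ is chain-independent), so one must exhibit a chain of degree $D$ and show every other chain has strictly smaller degree; for general $\lambda$ the factors $s_q(T)$ interfere and this is not routine. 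The paper carries out this tableau-degree argument only for $\lambda=(1^n)$ (\cref{Fuchs_Kirillov}) and explicitly abandons it for general $\lambda$, proving part (3) instead through the expansion of $\P_{\mu'}(\x;q,0)$ in Kostka--Foulkes polynomials, their monicity of degree $\fn(\mu)-\fn(\eta')$, and the strict inequality $\fn(\lambda')<\fn(\eta')$ for $\lambda\less\eta$, $\lambda\neq\eta$ (\cref{n_ineq}). So either supply a proof that the maximizing chain is unique of degree $D$ (a nontrivial combinatorial statement), or follow the Kostka-polynomial route; as it stands, part (3) is asserted, not proved.
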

\begin{remark}\label{Lambda-length} When the composition $\Lambda$ has only one part i.e., $\Lambda=(n)$, we have $\u_\Lambda(\FF_q)=\{0\}$. Thus $\C_\mu\cap \u_\Lambda(\FF_q)\neq \emptyset$ if and only if $\mu=(1^n)$. In this case the zero matrix is the only element in $\C_\mu\cap\u_\Lambda(\FF_q)$ and so $\F_{\mu\Lambda}(q)=1$.
\end{remark}

Parts (2) and (3) of~\cref{mainthm:Fmula-comb} extend some of the results of
\cite{Borodin} and~\cite[Theorem 4.1]{Fuchs-Kirillov}. The main  tool in the proof of ~\cref{mainthm:Fmula-comb} is a parabolic variant of Borodin's division algorithm to reduce the counting problem  from $\u_{\Lambda}(\FF_q)$ to a smaller nilradical. 

For $\lambda=(1^n)$, Fulman~\cite{Fulman}, showed that $\F_{\mu\lambda}(q)$ can be expressed in terms of the Hall-Littlewood
polynomials. In the special case $\lambda=(1^n)$, 
the formula given for $\F_{\mu\lambda}(q)$ in~\cref{mainthm:Fmula-comb} is different from Fulman's result. The $b_{\mu\lambda}(q)$ are related to the coefficients of Macdonald polynomials  up to an explicit normalization factor, as we explain below.

\begin{definition}\label{psi:dif}
Let $\lambda=(\lambda_1,\dots,\lambda_s)$ be a partition of $n$. For any partition $\mu\vdash n$, we  define 
\begin{equation}\label{functiona:dif}
  a_{\mu\lambda}(q)\defin \sum_{\mathcal F} \prod_{j=1}^{s} \psi_{\mu^{j}/\mu^{j-1}}(q)
\quad\text{with}\quad
\psi_{\eta/\rho}(q)\defin \prod_{i\geq 1}{\eta_i-\eta_{i+1} \brack \eta_i-\rho_i}_q,
\end{equation}
where $\eta/\rho$ is a horizontal strip and $\mathcal{F}$ represents $\mathcal{F}(\mu, \lambda)$ as in~\cref{theta:dif}. By convention, 
$a_{\mu\lambda}(q)=0$ when $\mathcal F=\emptyset$.
\end{definition}

  Let $\P_{\mu}(\x;q,t)$ denote the  Macdonald symmetric function in  $\x=(x_1,x_2,\ldots)$ indexed by $\mu$.
Also, let the $\Q_\mu(\x;q,t)$ denote the duals of the  $\P_\mu(\x;q,t)$ with respect to the $q,t$-scalar product~\cite[p. 323, Eq. (4.12)]{Macdonald}.
From~\cite[p.~346, Eq. (7.13')]{Macdonald}, one obtains 
\begin{equation}\label{monomila_expansion:Mac}
    \P_{\mu}(\x;q,0)=\sum_{\nu}a_{\mu\nu}(q)m_\nu (\x),
\end{equation}
 where $m_\nu(\x)$ is the monomial symmetric function associated to $\nu$. Then by a result of Ram and Schlosser \cite[Theorem~5.3]{Ram}, we have  
\begin{equation}\label{bmuamu}
b_{\mu\lambda}(q)=\left(\prod_{i\geq 1}\frac{[\lambda_i]_q!}{[\mu_i-\mu_{i+1}]_q!}\right)\, a_{\mu\lambda}(q).
\end{equation}
We denote the coefficient of $\x^\lambda=x_1^{\lambda_1}x_2^{\lambda_2}\cdots$ in  $\Phi(\x)$ by $[\x^\lambda]\Phi(\x)$. 
Then using~\cref{bmuamu}, we can state the formula for $\F_{\mu\lambda}(q)$ in ~\cref{mainthm:Fmula-comb}  as follows.

\begin{theorem}\label{Intro_them: Main_Macdonald}
Let $\mu$ and $\lambda=(\lambda_1,\dots,\lambda_s)$ be  partitions of $n$ and $\lambda \less \mu'$. Then
\begin{align*}
\F_{\mu\lambda}(q)&=\frac{(q-1)^{n-\ell(\mu)}q^{\sum_{i\geq 1}\binom{\mu'_i-\mu'_{i+1}}{2}+\binom{n}{2}+\ell(\mu)-\fn(\mu)-\fn(\lambda')-n}[\lambda]_q!}{\prod_{i\geq 1}{[\mu_i'-\mu_{i+1}']_q!}}\, [\x^\lambda]\P_{\mu'}(\x;1/q,0) \\[0.2cm]
&= (q-1)^n q^{\binom{n}{2} - \fn(\mu) - \fn(\lambda') - n} [\lambda]_q!\, [\x^\lambda] \Q_{\mu'}(\x; 1/q, 0),
\end{align*}
where $[\lambda]_q!\defin[\lambda_1]_q!\dots [\lambda_s]_q!$. 
\end{theorem} 

\begin{example}\label{example1} Let $\lambda=(2^k)$ and $\mu=(k,k)$ be two partitions of $2k$. From~\cref{Intro_them: Main_Macdonald} we have:
\begin{equation} \label{M-Foru:application}
\F_{\mu\lambda}(q)=(q-1)^{2k-2}q^{\binom{2k-2}{2}}(q+1)^{k-1}.
\end{equation}
To see this, we first observe that $\mu'=\lambda$.  Indeed, by~\cite[p. 325, Eq. (4.17)]{Macdonald}
we have $\P_{\mu'}(x_1,\dots,x_k; q,t)=(x_1\cdots x_k)^2$.
Thus from~\cref{Intro_them: Main_Macdonald} we obtain: 
\begin{equation*}
\begin{split}
\F_{\mu\lambda}(q) &= (q-1)^{2k-\ell(\mu)} q^{\sum_{i\geq 1}\binom{\mu'_i-\mu'_{i+1}}{2}+\binom{2k}{2}+\ell(\mu)-\fn(\mu)-\fn(\lambda')-n} \left( \prod_{i \geq 1} \frac{[\lambda_i]_q!}{[\mu_i' - \mu_{i+1}']_q!} \right)\\
&=(q-1)^{2k-2}q^{\binom{2k-2}{2}}\frac{([2]_q!)^k}{[2]_q!}=(q-1)^{2k-2}q^{\binom{2k-2}{2}}(q+1)^{k-1}.
\end{split}
\end{equation*} 
\end{example}

We remark that~\cref{Intro_them: Main_Macdonald} is a special case of~\cref{Hess_theorem}  in a direct way.
Recall that $\G = \GL_n(\FF_q)$ and $\U_\lambda = \{I + X : X \in \u_\lambda(\FF_q)\}$. 
Now $|\U_\lambda|=q^{\binom{n}{2}-\fn(\lambda')}$. Furthermore,
if $\langle
\,,\,\rangle$ 
denotes the Hall scalar product~\cite[Sec. I.4]{Macdonald}, then
$\left\langle h_\mu(\x), m_\lambda(\x)\right\rangle = \delta_{\mu\lambda}$, where the $h_\mu$ are the complete symmetric functions.
It follows that
\begin{align*}
[\x^\lambda]\, q^{\fn(\mu)}&\P_{\mu'}(\x;1/q,0)=
\left\langle h_\lambda(\x), q^{\fn(\mu)}\P_{\mu'}(\x;1/q,0)\right\rangle\\
&=
\left\langle h_\lambda(\x),\omega\tilde{\HH}_{\mu}(\x; q)\right\rangle=
\left\langle\omega h_\lambda(\x),\tilde{\HH}_{\mu}(\x; q)\right\rangle
=
\left\langle e_\lambda(\x),\tilde{\HH}_{\mu}(\x; q)\right\rangle,
\end{align*}
where the $e_\lambda(\x)$ are the elementary symmetric functions. 
Thus,~\cref{Intro_them: Main_Macdonald} is equivalent to the following assertion.

\begin{theorem}\label{F_mulambda_G:thm} Let $\mu$ and $\lambda$ be partitions of $n$ with $\lambda\less \mu'$. Then 
\begin{align}\label{F_mulambda_G:eq}
    \F_{\mu\lambda}(q)&
    =\frac{(q-1)^n|\U_\lambda|}{|\Z_G(I+J_\mu)|} [\lambda]_q!\,  [\x^\lambda] q^{\fn(\mu)}\P_{\mu'}(\x;1/q,0)
    =\frac{(q-1)^n|\U_\lambda|}{|\Z_G(I+J_\mu)|}\left\langle [\lambda]_q!e_{\lambda}(\x),\tilde{\HH}_{\mu}(\x;q)\right\rangle.
\end{align}   
\end{theorem}
Now the key observation is that  $[\lambda]_q!e_\lambda(\x)$  is the chromatic quasisymmetric function of a disjoint union of complete graphs. 
\cref{Hess_theorem} establishes that this interpretation of the formula for $\F_{\mu\lambda}(q)$ extends  verbatim to $\F_{\mu \hes}(q)$!

We conclude this section by noting  another perspective on $\F_{\mu\hes}(q)$ that comes from representation theory. Let $\G$ be a finite group. For $u\in \G$ the conjugacy class of $u$ and the centralizer of $u$ are denoted by  $\conj(u)$ and $\cen(u)$, respectively. 
Then for any subgroup $\HH$ of $\G$ we have
\begin{equation}\label{Counting}
    \#\{g\in \G: g^{-1}ug\in \HH\}= |\conj(u)\cap \HH|\cdot |\cen(u)|.
\end{equation}   
Now suppose $\mathrm{ch}(\Ind_{\HH}^{\G}1)(u)$ denotes the value of the character of the induced representation $\Ind_{\HH}^{\G}1$ at $u\in\G$. Then 
\begin{align}\label{ind-form}
\mathrm{ch}(\Ind_{\HH}^{\G}1)(u)&=\#\left\{g\HH: g^{-1}ug\in \HH\right\}
=
\frac{\#\left\{g\in \G: g^{-1}ug\in \HH\right\}}{|\HH|}=\frac{ |\cen(u)|}{|\HH|}|\conj(u)\cap \HH|. 
\end{align}
Let
\begin{equation}\label{U_hes:eq}
\U_\hes=\U_\hes(\FF_q) \defin \{I + A : A \in \u_\hes(\FF_q)\}
\end{equation}
denote the unipotent subgroup of $\G=\GL_n(\FF_q)$ associated with $\hes$. Let $\chi_\hes$ be the character of the induced representation $\Ind_{\U_\hes}^\G \mathbf{1}$. Then from~\cref{Counting,ind-form} we have:
\begin{equation}\label{ind-form-char}
\chi_\hes(I + J_\mu) = \frac{|\Z_\G(I + J_\mu)|}{|\U_\hes|} \F_{\mu\hes}(q).
\end{equation}
Therefore, understanding $\F_{\mu\hes}(q)$ reduces to analyzing the induced representation of the trivial representation from $\U_\hes$ to $\G$. This approach was explored by Gagnon, among other things, in his recent and inspiring paper~\cite{Gagnon}. To describe the connection with Gagnon's work, we need to recall the work of  Zelevinsky~\cite{Zelevinsky} which introduces a family of Hopf algebras, known as PSH-algebras, that unify the representation theories of $\mathrm{S}_n$ and $\GL_n(\FF_q)$. In Gagnon's notation, Zelevinsky's construction produces a graded Hopf algebra isomorphism
\[
\mathbf{p}_{\{1\}} \colon \mathsf{cf}^{\mathrm{uni}}_{\mathrm{supp}}(\mathrm{GL}_\bullet) \to \mathrm{Sym}, \quad \delta_\lambda \mapsto \tilde{\P}_\lambda(\x; q) := q^{-\fn(\lambda)} \P_\lambda(\x; q^{-1}),
\]
where $\mathrm{Sym}$ is the ring of symmetric functions with coefficients in $\mathbb Q(q)$ and \[
\P_\lambda(\x; t)\defin\P_\lambda(\x;0,t)
\] denotes a Hall-Littlewood  function. Using this, Gagnon proves~\cite[Theorem 3.1]{Gagnon} that 
\[
\chi_\hes = (q - 1)^n \, \mathbf{p}_{\{1\}}^{-1}(X_{\graph(\hes)}(\x; q)).
\]
Gagnon then applies this identity to derive a formula for $\chi_\hes$. As 
$\tilde{\P}_\lambda(\x; q)$ is dual to the modified Hall-Littlewood basis $\tilde{\HH}_\lambda(\x;q)$ with respect to the Hall scalar product, this formula is in principle equivalent to~\cref{Hess_theorem} above. Our proof of~\cref{Hess_theorem} is entirely combinatorial and thus in some sense more elementary.

\section{Applications}
In this section, we explain three applications of our 
results on $\F_{\mu\lambda}(q)$ and $\F_{\mu\hes}(q)$. 
Our first application  is in counting the number of points on nilpotent Hessenberg varieties. 
Varieties of this kind have been studied in~\cite{Tymoczko,Ji-Precup,Precup-Sommers}.
We recall the definition of a  nilpotent Hessenberg variety from~\cite{Precup-Sommers}. 
In what follows, the conjugate of a Hessenberg function $\hes\in\mathcal{H}_n$ is defined by 
$$
\hes': [n]\to [n],\quad i\mapsto \#\{j\in [n]: \hes(j)\geq n+1-i\}. 
$$
\begin{definition}[Nilpotent Hessenberg variety]\label{Nil_hes:def} Let $\hes \in \mathcal{H}_n$ be a Hessenberg function, and let $X$ be a nilpotent $n\times n$ matrix with entries in $\FF_q$. Let 
$\mathscr{F}_n$ denote
the set of complete flags in $\FF_q^n$. The nilpotent Hessenberg variety associated to $\hes$ and $X$ is defined by 
$$
\text{$\mathcal{H}$ess$_\mathrm{nil}$}(\hes,X)\defin \left\{V_\bullet=(V_i)_{1\leq i\leq n}\in\mathscr{F}_n: XV_i\subseteq V_{\mathsf{e}(i)} \right\},
$$
where $\mathsf{e}(i):=n-\hes'(n+1-i)$.
\end{definition}
\begin{example}
  For the Hessenberg function $\hes=(2,3,5,5,5)$ (see Example \ref{eg:hess}), we have $\hes'=(3,3,4,5,5)$ and $\mathsf{e}=(0,0,1,2,2)$.
\end{example}

\begin{remark} We have $\#\text{$\mathcal{H}$ess$_\mathrm{nil}$}(\hes,X)=\#\text{$\mathcal{H}$ess$_\mathrm{nil}$}(\hes,g^{-1}Xg)$ for any $g\in \GL_n(\FF_q)$. 
  In~\cref{Nil_hes:def}, the function $\mathsf{e}$ is non-decreasing and satisfies $\mathsf{e}(i)<i$. 
  We refer the reader to~\cite{Abreu-Nigro-Ram} for counting the number of $\FF_q$-points on a variant of the above Hessenberg varieties, defined by
$$
\text{$\mathcal{H}$ess}(\hes,X)\defin \left\{V_\bullet=(V_i)_{1\leq i\leq n}\in\mathscr{F}_n: XV_i\subseteq V_{\mathsf{h}(i)} \right\}.
$$
\end{remark}

\begin{theorem}
\label{thm:nilphess}
 Let $\hes\in\mathcal{H}_n$ be a Hessenberg function and let $X$ be a nilpotent $n\times n$ matrix with entries in $\FF_q$, with Jordan form $J_\mu$. Then 
$$
\#\mathrm{\mathcal{H}ess_{nil}}(\hes,X)=\frac{|\Z_\G(I+J_\mu)|}{(q-1)^nq^{\binom{n}{2}}}\F_{\mu\hes}(q)=q^{-E_\hes}\langle \tilde{\HH}_\mu(\x;q),X_{\graph(\hes)}(\x;q)\rangle,
$$
where $E_\hes$ denotes the number of edges of the graph $\graph(\hes)$.
\end{theorem}

Our second application involves a formula conjectured by Kirillov–Melnikov~\cite{KirillovMelnikov}, and  proved by Ekhad-Zeilberger~\cite{Ekhad}.
Let $\Lambda$ be a composition of $n$ and define
\begin{equation}
\label{eq:ankaka}
a_{nk}(\Lambda)\defin \# \left\{X\in \u_{\Lambda}(\FF_q): X^2=0, \mathrm{rank}(X)=k\right\}.
\end{equation}
From~\cref{mainthm:Fmula-comb} it follows that \[
a_{nk}(\Lambda)=a_{nk}(\lambda),
\quad\text{ for }\lambda=\mathrm{sort}(\Lambda). 
\]
Our analysis of $a_{nk}(\lambda)$ starts with the observation that a matrix $X \in \mathfrak{u}_\lambda(\mathbb{F}_q)$, with rank $k$ and $X^2 = 0$, must have a Jordan form of type $\mu = (2^k, 1^{n-2k})$ and so $\mu' = (n-k, k)$, where $0 \leq k \leq n/2$.
Therefore, thanks to~\cref{Intro_them: Main_Macdonald}, we have
\begin{equation}\label{Into:ankQ_formula}
a_{nk}(\lambda)=(q-1)^n q^{nk-k^2- \fn(\lambda') - n} \left( \prod_{1\leq i\leq s} [\lambda_i]_q! \right) [\x^\lambda] \Q_{\mu'}(\x; 1/q, 0).
\end{equation}

Let the $f_{\nu\rho}^\lambda(t)$ be the polynomials defined in \cite[Sec. III.3, p. 215]{Macdonald} which describe the structure constants of products of Hall-Littlewood polynomials. Indeed the $f_{\nu\rho}^\lambda(t)$ are related to the Hall polynomials $g_{\nu\rho}^\lambda(t)$ as given in  \cite[Eq. (3.6), p. 217]{Macdonald}.
Using~\cref{Intro_them: Main_Macdonald}, we prove the following combinatorial formula for $a_{nk}(\lambda)$: 

\begin{theorem}\label{anklambda-explicit} Let $\lambda$ be a partition of $n$. Then 
for $0\leq k\leq n/2$ we have 
\[
a_{nk}(\lambda)=q^{nk-k^2-\fn(\lambda')} \bar a_{nk}(\lambda),
\] where $\bar a_{nk}(\lambda)$ is equal to 
\begin{equation*}
\sum_{j=0}^k (-1)^jq^{-\binom{j+1}{2}-(n-2k)j}{n-2k+j\brack j}_q\,\frac{q^{n-2k+2j}-1}{q^{n-2k+j}-1} \sum_{\substack{\nu\, \vdash n-k+j\\ \rho\,\vdash k-j}}\frac{q^{\fn(\nu')+\fn(\rho')}f_{\nu\rho}^\lambda(1)}{[\nu]_q!\,[\rho]_q!}\, ,    
\end{equation*}
with the convention that $(q^0-1)/(q^0-1)=1$.

\end{theorem}
Note that $f_{\nu\rho}^\lambda(t)=0$ unless $\nu,\rho\subseteq \lambda$ and $|\nu|+|\rho|=|\lambda|$.
When $\lambda=(1^n)$, from the definition of $f_{\nu\rho
}^\lambda(t)$ it follows easily that \begin{equation}\label{Hall}
\sum_{\substack{\nu\, \vdash n-k+j\\ \rho\,\vdash k-j}}\frac{q^{\fn(\nu')+\fn(\rho')}f_{\nu\rho}^\lambda(1)}{[\nu]_q!\,[\rho]_q!}=\binom{n}{k-j}.
\end{equation}

Next define
\[
a_{nk}\defin\#\left\{X\in\u_n(\FF_q)\,:\,X^2=0\text{ and }\mathrm{rank}(X)=k\right\}.
\]
For $\lambda=(1^n)$ 
the 
formula in~\cref{anklambda-explicit} for $a_{nk}(\lambda)$ specializes to \begin{equation}\label{ank_explicitformula}
a_{nk}=\sum_{j=0}^k(-1)^jq^{nk-k^2-\binom{j+1}{2}-(n-2k)j}{n-2k+j \brack j}_q\binom{n}{k-j}\frac{q^{n-2k+2j}-1}{q^{n-2k+j}-1}.
\end{equation}
Now define
\[
\mathrm{C}_n=\mathrm{C}_n(q)\defin\#
\left\{X\in\u_n(\FF_q)\,:\,X^2=0\right\}.
\]
Formula~\cref{ank_explicitformula} has the following immediate consequence:
\begin{corollary}
\label{cor-diffekhad}
For $n\geq 1 $ we have \begin{equation*}
\mathrm{C}_{n}=
\sum_{0\leq k_1+k_2\leq\lfloor\frac n 2 \rfloor}q^{nk_1-k_1^2}\binom{n}{k_1}
(-1)^{k_2}q^{\binom{k_2}{2}}{n-2k_1-k_2 \brack k_2}_q\frac{[n-2k_1]_q}{[n-2k_1-k_2]_q}
.\end{equation*}
\end{corollary}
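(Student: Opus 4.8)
The plan is to obtain the formula as a reindexing of $\mathrm{C}_n=\sum_{k\ge 0}a_{nk}$. Two ingredients are already available: partitioning $\{X\in\u_n(\FF_q):X^2=0\}$ according to $\rk(X)$ gives $\mathrm{C}_n=\sum_{k\ge 0}a_{nk}$ directly from the definition of $a_{nk}$; and since a nilpotent $X$ with $X^2=0$ has Jordan type $(2^k,1^{n-2k})$, we have $a_{nk}=0$ for $k>n/2$, so $\mathrm{C}_n=\sum_{k=0}^{\lfloor n/2\rfloor}a_{nk}$. Substituting the closed form~\eqref{ank_explicitformula} for $a_{nk}$ turns the right-hand side into the finite double sum $\sum_{k=0}^{\lfloor n/2\rfloor}\sum_{j=0}^{k}$ of explicit $q$-monomials, and the corollary is precisely this sum rewritten in new summation variables.

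Concretely, I would apply the substitution $(k_1,k_2)=(k-j,\,j)$, which is a bijection from $\{(k,j):0\le j\le k\le\lfloor n/2\rfloor\}$ onto $\{(k_1,k_2):k_1,k_2\ge 0,\ k_1+k_2\le\lfloor n/2\rfloor\}$, with inverse $(k,j)=(k_1+k_2,\,k_2)$. Under it, $n-2k+j=n-2k_1-k_2$ and $n-2k+2j=n-2k_1$, so the $q$-binomial ${n-2k+j\brack j}_q$ becomes ${n-2k_1-k_2\brack k_2}_q$, the binomial $\binom{n}{k-j}$ becomes $\binom{n}{k_1}$, the sign $(-1)^j$ becomes $(-1)^{k_2}$, and $\dfrac{q^{n-2k+2j}-1}{q^{n-2k+j}-1}$ becomes $\dfrac{q^{n-2k_1}-1}{q^{n-2k_1-k_2}-1}=\dfrac{[n-2k_1]_q}{[n-2k_1-k_2]_q}$; the only degenerate case, $n-2k_1-k_2=0$, forces $k_2=0$, and there both expressions equal $1$ by the stated convention. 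The sole factor requiring a computation is the power of $q$: expanding $nk-k^2-\binom{j+1}{2}-(n-2k)j$ with $k=k_1+k_2$ and $j=k_2$, the $nk_2$ terms and the $k_1k_2$ terms cancel, and the surviving $k_2^2$ terms combine with $-\binom{k_2+1}{2}$ to leave exactly $nk_1-k_1^2+\binom{k_2}{2}$. Collecting all factors, the summand of the reindexed double sum is $q^{nk_1-k_1^2}\binom{n}{k_1}(-1)^{k_2}q^{\binom{k_2}{2}}{n-2k_1-k_2\brack k_2}_q\dfrac{[n-2k_1]_q}{[n-2k_1-k_2]_q}$, and summing over $0\le k_1+k_2\le\lfloor n/2\rfloor$ yields the asserted identity.

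There is no real obstacle here: the statement is just $\mathrm{C}_n=\sum_k a_{nk}$ with~\eqref{ank_explicitformula} inserted and the double sum reindexed. The one spot where a slip is possible is the algebra of the $q$-exponent (together with the harmless $[0]_q/[0]_q$ boundary convention), which the expansion above settles once and for all.
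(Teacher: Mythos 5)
Your proposal is correct and matches the paper's own proof: the paper likewise writes $\mathrm{C}_n=\sum_{k=0}^{\lfloor n/2\rfloor}a_{nk}$, inserts the closed form~\eqref{ank_explicitformula}, and reindexes via $k_1=k-j$, $k_2=j$ to obtain the stated double sum, with the same treatment of the $q$-exponent and the $n-2k_1-k_2=0$ boundary convention. No discrepancies.
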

The problem of computing $\mathrm{C}_n$ was considered by Kirillov and Melnikov in~\cite{KirillovMelnikov}, where they conjectured a different formula for $\mathrm{C}_n$ that was later proved by Ekhad and Zeilberger~\cite{Ekhad}; see~\cref{Ekhad_thm} below.
Let $\chi_3$ be the Dirichlet character defined by
\begin{equation}\label{Dirichlet}
\chi_3(n):=\begin{cases}
0 & n\equiv 0\pmod{3};\\
1 & n\equiv 1\pmod{3};\\
-1 &  n\equiv 2\pmod{3}.\\
\end{cases}
\end{equation}
\begin{theorem}\label{Ekhad_thm}  (Ekhad--Zeilberger) For $n\geq 1$ we have 
\begin{equation*}
    \begin{split}
        \mathrm{C}_{2n+1}&=\sum_{j=0}^n \binom{2n+1}{n-j}\left[\chi_3(j+1)q^{\frac{1}{3}\binom{2j+1}{2}}-\chi_3(j)q^{\frac{1}{3}\binom{2j+2}{2}}\right]q^{n^2+n-j^2-j}\\
        &=\sum_{j\in\ZZ}\left[\binom{2n+1}{n-3j}-\binom{2n+1}{n-3j-1}\right]q^{n^2+n-3j^2-2j},        
    \end{split}
\end{equation*}
 and
\begin{equation*}
    \begin{split}
         \mathrm{C}_{2n}&= \binom{2n}{n}q^{n^2}+\sum_{j=1}^{n}\binom{2n}{n-j}\left[\chi_3(2j+1)q^{\frac{1}{3}\binom{2j}{2}}-\chi_3(2j-1)q^{\frac{1}{3}\binom{2j+1}{2}}\right]q^{n^2-j^2}\\
        &=\sum_{j\in\ZZ}\left[\binom{2n}{n-3j}-\binom{2n}{n-3j-1}\right]q^{n^2-3j^2-2j}.      
    \end{split}
\end{equation*}  
\end{theorem}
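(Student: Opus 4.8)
The plan is to prove \cref{Ekhad_thm} by reducing \cref{cor-diffekhad} to a finite form of Euler's pentagonal number theorem, with the Jing--J\'{o}zefiak two-row formula providing the bridge to the Macdonald-polynomial side. Writing $a_{nk}=\#\{X\in\u_n(\FF_q):X^2=0,\ \rk X=k\}$ so that $\mathrm{C}_n=\sum_{k\ge 0}a_{nk}$, \cref{Into:ankQ_formula} with $\lambda=(1^n)$ (for which $\fn(\lambda')=0$ and $[\lambda_i]_q!=1$, and $\mu'=(n-k,k)$ is the transpose of the only relevant Jordan type $\mu=(2^k,1^{n-2k})$) gives
$$
\mathrm{C}_n=(q-1)^n q^{-n}\,[\x^{1^n}]\sum_{k\ge 0}q^{\,nk-k^2}\,\Q_{(n-k,k)}(\x;1/q,0),\qquad \x=(x_1,\dots,x_n),
$$
which is the content of \cref{cor-diffekhad} in disguise. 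Grouping the double sum of \cref{cor-diffekhad} by its first index and performing the inner sum, one obtains
$$
\mathrm{C}_n=\sum_{j\ge 0}\binom{n}{j}\,q^{\,j(n-j)}\,T(n-2j),\qquad
T(M)\defin\sum_{\ell\ge 0}(-1)^{\ell}q^{\binom{\ell}{2}}{M-\ell\brack\ell}_q\,\frac{[M]_q}{[M-\ell]_q}\, .
$$

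The crux is to evaluate $T(M)$. Using $\tfrac{[M]_q}{[M-\ell]_q}=1+q^{M-\ell}\tfrac{[\ell]_q}{[M-\ell]_q}$ and $\tfrac{[\ell]_q}{[M-\ell]_q}{M-\ell\brack\ell}_q={M-\ell-1\brack\ell-1}_q$, the sum splits cleanly as
$$
T(M)=A(M)-q^{\,M-1}A(M-2),\qquad A(M)\defin\sum_{\ell\ge 0}(-1)^{\ell}q^{\binom{\ell}{2}}{M-\ell\brack\ell}_q\, .
$$
Here the Jing--J\'{o}zefiak expansion of $\Q_{(m,k)}$ into products of one-row $Q$-functions \cite[Eq.~(1)]{Jing} enters: it is what puts $\sum_{k}q^{nk-k^2}\Q_{(n-k,k)}(\x;1/q,0)$, and hence the sequence $T(M)$, into a closed form in which $A(M)$ is recognized as a truncated Euler series,
$$
A(M)=
\begin{cases}
(-1)^{r}q^{\,r(3r-1)/2}, & M=3r,\\
(-1)^{r}q^{\,r(3r+1)/2}, & M=3r+1,\\
0, & M\equiv 2\pmod 3.
\end{cases}
$$

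Finally I would reassemble. Inserting $T(M)=A(M)-q^{\,M-1}A(M-2)$ into the formula for $\mathrm{C}_n$ and shifting the summation index by one in the $A(M-2)$-piece — a short computation showing that the power of $q$ in both pieces is $q^{\,j(n-j)}$ — the two binomials combine and
$$
\mathrm{C}_n=\sum_{j\ge 0}\left[\binom{n}{j}-\binom{n}{j-1}\right]q^{\,j(n-j)}\,A(n-2j)\, .
$$
Substituting the pentagonal evaluation of $A$ (so that only the $j$ with $n-2j\not\equiv 2\pmod 3$ contribute, each via a single signed power of $q$) and treating $n$ even and $n$ odd separately yields precisely the difference-of-binomials formulas of \cref{Ekhad_thm}; the $\chi_3$-weighted forms follow by folding the sums with $\binom{n}{j}=\binom{n}{n-j}$. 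The one step with genuine content is the passage to $A(M)$: the identity expressing $A(M)$ as a truncated pentagonal series is itself classical (it follows from Jacobi's triple product, or from the $q$-Pascal recurrence for $A$), so the real point is to see that the combinatorics of $\u_n(\FF_q)$ collapses onto it — equivalently, that the Jing--J\'{o}zefiak coefficients are exactly those whose signed sum is a finite Euler series. This is the structural reason behind the formula that Stanley flags as having ``no conceptual reason'', and recognizing it is where the difficulty lies.
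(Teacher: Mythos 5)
Your proposal is correct and follows essentially the same route as the paper: starting from \cref{cor-diffekhad}, your $T(M)$ is the paper's $\B_M$, your splitting $T(M)=A(M)-q^{M-1}A(M-2)$ is the paper's identity $\B_M=\A_M-q^{M-1}\A_{M-2}$, and the truncated pentagonal evaluation of $A(M)$ is exactly the paper's \cref{Ekahd_lemma}, which both you and the paper cite as classical rather than prove. The only (cosmetic) differences are order of operations --- you combine the binomial coefficients into $\binom{n}{j}-\binom{n}{j-1}$ before substituting the pentagonal values, landing on the difference-of-binomials form first, whereas the paper substitutes first and then reindexes --- and a slight blurring in your write-up of the role of the Jing--J\'{o}zefiak formula, which only enters upstream in deriving \cref{cor-diffekhad} and is independent of the pentagonal identity for $A(M)$.
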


The Ekhad-Zeilberger proof of
~\cref{Ekhad_thm}
is very short, but 
R. Stanley notes in~\cite[p. 194]{Stanley} that ``no conceptual reason is known for such a simple formula''. It turns out that 
\cref{cor-diffekhad} sheds some light on the formulas for $\mathrm{C}_n$ in~\cref{Ekhad_thm}.
Indeed 
we give a short proof that \cref{cor-diffekhad} implies~\cref{Ekhad_thm},
using a formula for Macdonald polynomials $\Q_{(\mu_1,\mu_2)}(\x\, ;q,t)$ due to Jing and J\'{o}zefiak~\cite[Eq. (1)]{Jing}.

In~\cite{KirillovX=0}, Kirillov discovered the mysterious relation  
\begin{equation}\label{X2Hermit}
\sum_{k \geq 0} a_{nk} \, q^{k(k-n)} \, H_{n-2k}(w \mid 1/q) = (2w)^n,
\end{equation}
where the \( H_{n-2k}(w \mid 1/q) \) are the continuous $q$-Hermite polynomials as known in the Askey–Wilson scheme. The proof of~\cref{X2Hermit} 
in~\cite{KirillovX=0}
is based on an algebraic structure which Kirillov calls ``fusion rules'' on the set of coadjoint orbits of \( \GL_n(\FF_q) \) on \( \u_n(\FF_q) \). In~\cite{Kirillov}, Kirillov writes ``This algebra is in fact one of the realizations of the Hall algebra, but we shall not discuss this phenomenon here.'' 
In~\cref{eq: Kirillov-general} below, we obtain a generalization of 
Kirillov's mysterious formula~\cref{X2Hermit}, and show that it follows easily from the Cauchy identity for Macdonald polynomials combined with~\cref{Intro_them: Main_Macdonald}.
Recall that the continuous $q$-Hermite polynomials $H_n(w\, |\, q)$ are defined recursively by
$$
H_{n+1}(w\, |\, q)=2wH_n(w\, |\, q)+(q^n-1)H_{n-1}(w\, |\,q), \qquad n\geq 0,
$$
with the initial conditions $H_0(w\,|\,q):=1$ and $H_{n}(w\,|\,q):=0$ when $n\leq -1$. One can also show~\cite[Chapter 13]{Ismail}
 that 
 \begin{equation}
\label{HnvsRogerSzego}
H_n(\cos(\theta)\, |\, q)=\sum_{k=0}^n{n \brack k}_qe^{i(n-2k)\theta}.
\end{equation}
It is  a classical result that the $H_n(w\, |\, 1/q)$ form a sequence of  orthogonal polynomials, i.e.,
\begin{equation}\label{ank:integral}
\frac{1}{2\pi}\int_{0}^\pi H_n(\cos(\theta)\, |\, 1/q)H_m(\cos(\theta)\, |\, 1/q) (e^{2i\theta},e^{-2i\theta};1/q)_{\infty}=\frac{(1/q;1/q)_n}{(1/q;1/q)_\infty}\delta_{nm},
\end{equation}
where 
\begin{align*}
(e^{2i\theta},e^{-2i\theta}\,;1 /q)_{\infty}&\defin\left(e^{2i\theta}\,; 1/q\right)_{\infty}\,\left(e^{-2i\theta}; 1/q\right)_{\infty}
=\prod_{j\geq 0}\left(1-q^{-j}e^{2i\theta}\right)\prod_{j\geq 0}\left(1-q^{-j}e^{-2i\theta}\right).
\end{align*}
From this, one can conclude the following remarkable formula (due to Kirillov)
\footnote{
From~\cref{HnvsRogerSzego} it is probably more natural to express~\cref{ank-Hermite,ank:integral} in terms of Roger-Szeg\Hung{o} polynomials $h_n(x;q)=\sum_{k=0}^n{n \brack k}_qx^k$. However, we have kept our exposition loyal to Kirillov's articles~\cite{KirillovX=0,Kirillov}. We thank Ole Warnaar for bringing this point to our attention.
}
\begin{equation}\label{ank-Hermite}
a_{nk}=M(n,k,q)\frac{1}{2\pi}\int_{0}^\pi ( 2\cos(\theta))^{n} H_{n-2k}(\cos(\theta)\, |\, 1/q)(e^{2i\theta},e^{-2i\theta}\, ;1/q)_{\infty}\, d\theta 
\end{equation}
where
\[
M({n,k,q})\defin
q^{k(n-k)}\prod_{j=1}^{n-2k}\left(1-\frac{1}{q^j}\right)^{-1}.
\]Below we discuss a generalization of~\cref{X2Hermit}. To this end, first we introduce some notation. Recall that the Chebyshev polynomials of the first kind $T_n(w)$ are defined by
$
T_n(\cos(\theta)) = \cos(n\theta),
$
which are also derived from the recurrence relation for $n \geq 2$:
\[
T_{n+1}(w) = 2wT_n(w) - T_{n-1}(w), \qquad T_0(w) = 1, \quad T_1(w) = w.
\]
For a partition $\rho = (\rho_1, \rho_2, \dots)$, define
\begin{equation}
\label{eq:Trho}
T_{\rho}(w) \defin T_{\rho_1}(w)T_{\rho_2}(w)\cdots .
\end{equation}
For any partition $\rho \vdash n$, we set
\begin{equation}\label{z_rho}
z_\rho \defin \prod_{i \geq 1} i^{m_i} \cdot m_i!\, ,
\end{equation}
where $m_i = m_i(\rho)$ is the number of parts of $\rho$ equal to $i$. Note that $z_\rho$ is the size of the centralizer of a permutation of type $\rho$ in the symmetric group $\mathrm{S}_n$. 

Evidently we have $a_{nk}(\lambda)=0$ when $k>n/2$. 
We prove the following assertion. 

\begin{theorem}\label{eq: Kirillov-general} Let $\lambda=(\lambda_1,\dots,\lambda_s)$ be a partition of $n$. Then
\begin{equation*}
\sum_{k=0}^n a_{nk}(\lambda)q^{k(k-n)}H_{n-2k}(w\,|\, 1/q)=[\lambda]_q! (q-1)^nq^{-\fn(\lambda')}\prod_{i=1}^s\sum_{\rho\, \vdash \lambda_i}\frac{2^{\ell(\rho)}T_{\rho}(w)}{z_\rho\prod_{i=1}^{\ell(\rho)}\left(q^{\rho_i}-1\right)}.
\end{equation*}
\end{theorem}
This theorem is indeed a special case of~\cref{Cauch_PQg}. To illustrate this formula we provide two examples.
\begin{example} Consider $\lambda=(1^n)$. In this case we have $[\lambda]_q!=1$, $\fn(\lambda')=0$.
Moreover for any $1\leq i\leq n$, the only partition $\rho$ that contributes in the right hand side of the formula in~\cref{eq: Kirillov-general}  is $\rho=(1)$ which gives $z_\rho=1$. Then the right hand side of the formula in~\cref{eq: Kirillov-general} is 
$$
(q-1)^n\prod_{i=1}^n\frac{2T_1(w)}{q-1}=(2T_1(w))^n=(2w)^n.
$$
Thus, ~\cref{X2Hermit} is a special case of~\cref{eq: Kirillov-general}.  
\end{example}

\begin{example}
Now for $\lambda=(2^l)\vdash n$ we obtain $[\lambda]_q!=(q+1)^l$, $\fn(\lambda')=l$. Moreover for any $1\leq i\leq l$, the partitions $\rho$ that contributes in the right hand side of the formula in~\cref{eq: Kirillov-general}  are $\rho_1=(1,1)$ and $\rho_2=(2)$ which gives $z_{\rho_1}=z_{\rho_2}=2$. Thus  the right hand side of the formula in~\cref{eq: Kirillov-general} is 
$$
(q+1)^l(q-1)^{2l}q^{-l}\prod_{i=1}^l\left(\frac{4T_1^2(w)}{2(q-1)^2}+\frac{2T_2(w)}{2(q^2-1)}\right),
$$ which simplifies to
$$
(q+1)^l(q-1)^{2l}q^{-l}\left(\frac{4qw^2-q+1}{(q-1)^2(q+1)}\right)^l.
$$
Thus for $\lambda=(2^l)\vdash n$ we obtain 
$$\sum_{k\geq 0} a_{nk}(\lambda)q^{k(k-n)}H_{n-2k}(w\,|\, 1/q)=\left(4w^2-1+\frac{1}{q}\right)^l. $$
\end{example}
We can use orthogonality to represent $a_{nk}(\lambda)$ as an integral, similar to~\cref{ank:integral}. This integral can be evaluated using Jacobi's triple product, but we do not  discuss this calculation.

We end this section with the third application of our main results. For a partition $\mu$ of $n$, as before we denote the nilpotent Jordan matrix of type $\mu$ by $J_\mu$.  

\begin{theorem}\label{Double_cosets:thm}  Let $\hes_1$ and $\hes_2$ be two Hessenberg functions on $[n]$, and let $\U_{\hes_1}$ and $\U_{\hes_2}$ be the unipotent subgroups of $\G = \GL_n(\FF_q)$ associated with $\hes_1$ and $\hes_2$ as defined in~\cref{U_hes:eq}. Then
\begin{equation}\label{double-center}
\left|\U_{\hes_1}\backslash\G/\U_{\hes_2}\right|={q^{|\hes_1|+|\hes_2|-2n^2}}\sum_{\mu\, \vdash n} |\cen(I+J_\mu)|\,\F_{\mu\hes_1}(q)\,\F_{\mu\hes_2}(q),
\end{equation}
where $\cen(I+J_\mu)$ is the centralizer  of $I+J_\mu$ in $\G$.
\end{theorem}
By combining~\cref{center_size1,varphi_m} we obtain 
\begin{equation}\label{center_size2}
  \left|\cen(I+J_\mu)\right|=(q-1)^{\ell(\mu)}q^{n+2\fn(\mu)-\ell(\mu)-\sum_{i\geq 1}\binom{\mu_i'-\mu_{i+1}'}{2}}\prod_{i\geq 1}[\mu_i'-\mu_{i+1}']_q!.  
\end{equation}
From~\cref{center_size2} we infer that $|\Z_\G(I+J_\mu)|$ is a polynomial in $q$ of degree $n+2\fn(\mu)$. Recall that $\U_{k_\Lambda}=\U_\Lambda$ where $k_\Lambda$ is the Hessenberg function defined in~\cref{k_partiton}. Moreover, for a composition $\Lambda$ of $n$, one can easily verify that
\begin{equation}\label{hess_part}
n^2 - |k_\Lambda| = \binom{n}{2} - \fn(\lambda'),
\end{equation}
where $\lambda = \mathrm{sort}(\Lambda)$. Thus, for $h_1 = k_\Lambda$ and $h_2 = k_\Gamma$ where $\Lambda$ and $\Gamma$ are compositions of $n$, from~\cref{Double_cosets:thm}, the equality~\cref{hess_part}, the second part of~\cref{mainthm:Fmula-comb}, and the well-known fact that $\K_{\mu\Lambda} = \K_{\mu\lambda}$~\cite[Theorem 9.25]{Loehr}, we obtain

\begin{corollary}\label{Cor:doublecoset} 
Let $\Lambda$ and $\Gamma$ be two compositions of $n$. Set $\lambda=\mathrm{sort}(\Lambda)$ and $\gamma=\mathrm{sort}(\Gamma)$.
Then 
$ |\U_\Lambda \backslash \G / \U_\Gamma|$ is a polynomial in $q$ of degree $n + \fn(\lambda') + \fn(\gamma')$, with leading coefficient 
$\sum_{\mu} \K_{\mu' \lambda} \K_{\mu' \gamma}$.
\end{corollary}

\begin{remark} This corollary has an interesting combinatorial interpretation. First note that when $\lambda=(1^n)$ we have $\K_{\mu'\lambda}=\K_{\mu\lambda}$ and in this case $\K_{\mu\lambda}$ is the number of standard Young tableaux of shape $\mu$, denoted by $f^\mu$. Moreover,  when $\lambda=\gamma=(1^n)$,  from Bruhat decomposition we have $\GL_n(\FF_q)= \sqcup_{t \in T} \sqcup_{w \in \mathrm{S}_n} U t w U$ 
where $\U=\U_\lambda(\FF_q)$ and $T$ is the subgroup of diagonal  matrices of $\G$. From this we get $|\U \backslash G / \U|=n!(q-1)^n$. Thus from~\cref{Cor:doublecoset} we obtain the well-known formula
$
n!=\sum_{\mu\, \vdash n} \left(f^\mu\right)^2
$.
\end{remark}

\subsection{Computation of $\F_{\mu\lambda}(q)$} 
Computing the polynomials $\F_{\mu\lambda}(q)$ based on~\cref{Intro_them: Main_Macdonald} can be a highly  time consuming computation. Here we explain an alternative and significantly more efficient method for this computation that is based on 
a result of 
Ram and Schlosser~\cite{Ram}. 
To this end, we first introduce two functions $r(T)$ and s$(T)$ of a semistandard Young tableau $T$ ($r(T)$ was  initially introduced by Prasad and Ram~\cite{Prasad} for standard Young tableaux).
Henceforth,  the set of semistandard Young tableaux of shape $\mu$ and content $\lambda$ will be denoted by $\ssyt(\mu,\lambda)$.

\begin{definition}\label{def:bit}
Suppose $T\in \ssyt(\mu,\lambda)$. For each $i\geq 1$, let $\beta^i_{j}=\beta^i_j(T)$ denote the number of occurrences of the integer $i$ in the $j$th row of $T$ for $j\geq 1$. Define $\beta^i=\beta^i(T):=(\beta^i_{1},\beta^i_{2},\ldots)$ for $i\geq 1$. 
\end{definition}

Note that $\beta^i$ is not necessarily a partition, but it is a composition. Also, we have $|\beta^i|=\lambda_i$.

\begin{definition}\label[definition]{def:sqt}
Given $T\in \ssyt(\mu,\lambda)$, let $\beta^i=\beta^i(T)$ and define
\begin{align*}
  s_q(T)\defin\prod_{i\geq 1}{\lambda_i \brack \beta^i}_q\,\qquad \text{where}\qquad 
{\lambda_i \brack \beta^i}_q\defin\frac{[\lambda_i]_q!}{\prod_{j\geq 1}[\beta^i_j]_q!}\, .
\end{align*}
\end{definition}

\begin{definition}\label{def:rqt}
Suppose $T\in \ssyt(\mu,\lambda)$.  
Let $T_{ij}$ denote the entry in row $i$ and column $j$ of  $T$. For each box of $T$ in the $(i,j)$ position where $i\geq 2$, we define $r_{ij}(T)$ to be
\begin{equation}\label{rijT}
  r_{ij}(T)\defin\#\{j':j'\geq j\mbox{ and }T_{i-1,j'}<T_{ij}\}.
\end{equation}
Now we set
\begin{align*}  r_q(T)\defin\prod_{
\substack{
(i,j)
\in T\\
 i
 \geq 2}}[r_{ij}(T)]_q.
\end{align*}  
\end{definition}
We illustrate these definitions by the following  example.
\begin{example}\label{example} Consider the semistandard tableau $T$ below. The table on the right describes $r_{ij}(T)$ as defined in~\cref{rijT}:   
  \begin{align}\label{Example_T}
  \scriptsize
T=\ytableaushort{112,233,4 \none}\qquad\qquad r_{ij}(T): 
   \ytableaushort{\ \ \  ,221\none,3 \none}
  \end{align}
Thus $r_q(T)=[2]_q^2[3]_q=(1+q)^2(1+q+q^2)$. Moreover, for this tableau $T$, we have $\beta^1=(2,0,0)$, $\beta^2=(1,1,0)$, $\beta^3=(0,2,0)$ and $\beta^4=(0,0,1)$, which yield $s_q(T)=[2]_q=q+1$.
  \end{example} 

Ram and Schlosser observed the following relation~\cite[Definition 3.7 and Proposition 5.1]{Ram}. For $T\in\ssyt(\mu',\lambda)$ we have 
$$
s_q(T)r_q(T)=\prod_{j=1}^s\theta_{\mu^j/\mu^{j-1}}(q),
$$
where $\emptyset=\mu^0\subseteq\mu^1\subseteq\cdots\subseteq\mu^s=\mu'$ is a chain of partitions such that every $\mu^j/\mu^{j-1}$ is a horizontal $\lambda_j$-strip. As elements of $\mathcal{F}(\mu',\lambda)$ are in one-to-one correspondence with elements of $\ssyt(\mu',\lambda)$, from the definition of $b_{\mu'\lambda}(q)$ we infer that

\begin{equation}\label{Ram-Schlosser} 
b_{\mu'\lambda}(q)=\sum_{T\in \mathrm{SSYT}(\mu',\lambda)}s_q(T)r_q(T). 
\end{equation}
The next example explains how to compute $\F_{\mu\lambda}(q)$ using~\cref{mainthm:Fmula-comb} combined with~\cref{Ram-Schlosser}.
\begin{example}\label{Example:bmu} Let $\mu=(3,2,2)$ and $\lambda=(2,2,2,1)$. Then $\binom{n}{2}+\ell(\mu)-\fn(\mu)-n=11$ and $n-\ell(\mu)=4$. There are only three tableaux of shape $\mu'$ and content $\lambda$:
  \begin{align*}
  \scriptsize
T_1=\begin{ytableau}
 1 & 1 & 2 \\
 2 & 3 & 3\\
 4
\end{ytableau}\qquad\qquad 
T_2=\begin{ytableau}
 1 & 1 & 2 \\
 2 & 3 & 4\\
 3
\end{ytableau}\qquad\qquad
T_3=\begin{ytableau}
 1 & 1 & 3 \\
 2 & 2 & 4\\
 3
\end{ytableau}
  \end{align*}
Hence
$s_q(T_1)r_q(T_1)= [2]_q^3[3]_q$, $s_q(T_2)r_q(T_2)=[2]_q^4$ and 
$s_q(T_3)r_q(T_3)=[2]_q^3$. From~\cref{Ram-Schlosser} we obtain: 
$$
b_{\mu'\lambda}(q)=(q+1)^3(q^2+2q+3).
$$
This, in combination with the first part of~\cref{mainthm:Fmula-comb}, gives:
\begin{equation*}
\F_{\mu\lambda}(q)=(q-1)^4q^{11}\left(1+1/q\right)^3\left(3+2/q+1/q^2\right)=(q-1)^4q^{6}(q+1)^3(3q^2 + 2q + 1).
\end{equation*}
\end{example}

\begin{remark} When $\lambda = (2^k)$ and $\mu = (k,k)$, we obtain $\K_{\mu'\lambda} = 1$. The same argument as in~\cref{Example:bmu} can also be used to show~\eqref{M-Foru:application}. Moreover, using the same approach as in~\cref{Example:bmu}, one can show that for $\mu = \lambda = (k^k)$, we have:
$$
\F_{\mu\lambda}(q)=(q-1)^{k^2-k}q^{(k^2-k-1)\binom{k}{2}}\, \left([k]_q!\right)^{k-1}.
$$
For arbitrary partitions $\lambda$ and $\mu$, from~\cref{Into:eq_Fuchs-Kirillov}, we know the exact exponent of $q$ and $q-1$ in $\F_{\mu\lambda}(q)$. \cref{Table2} shows that $\F_{\mu\lambda}(q)$ for $\lambda=(2,2,2,2)$ is a product of powers of $q$, $q\pm 1$, $q$-integers and another polynomial that is irreducible over $\mathbb Q$. It is tempting to conjecture that this holds in general, and in particular to compute the exponent of $q+1$. 

\begin{table}[h!]\scriptsize
\renewcommand{\arraystretch}{1.5} 
\centering
\begin{tabular}{|c|c|}
\hline
\text{$\mu$} & $\F_{\mu\lambda}(q)$ \\
\hline
$ (4^2)$ & $(q - 1)^6 q^{15}(q + 1)^3 $ \\
\hline
$(4, 3, 1)$ & $(q - 1)^5 q^{13}(q + 1)^4(3q + 1) $ \\
\hline
$ (4, 2^2)$ & $(q - 1)^5 q^{12}(q + 1)^4(2q + 1) $ \\
\hline
$ (4, 2, 1^2)$ & $(q - 1)^4 q^{11}(q + 1)^4(3q^2 + 2q + 1) $ \\
\hline
$ (4, 1^4)$ & $(q - 1)^3 q^{11}(q + 1)^4$   \\
\hline
$ (3^2, 2)$ & $(q - 1)^5 q^{10}(q + 1)^4(3q^2 + 2q + 1) $ \\
\hline
$ (3^2, 1^2)$ & $(q - 1)^4 q^{10}(q + 1)^2(6q^4 + 9q^3 + 9q^2 + 3q + 1) $ \\
\hline
$ (3, 2^2, 1)$ & $(q - 1)^4 q^7(q + 1)^4(7q^4 + 7q^3 + 7q^2 + 2q + 1) $ \\
\hline
$ (3, 2, 1^3)$ & $(q - 1)^3 q^6(q + 1)^3 (8q^5 + 9q^4 + 10q^3 + 6q^2 + 2q + 1)$ \\
\hline
$ (3, 1^5)$ & $(q - 1)^2 q^6(q + 1)^3(3q^2 + 1) $ \\
\hline
$ (2^4)$ & $(q - 1)^4 q^6(q + 1)^2 (3q^2 + q + 1)(q^2 + q + 1)$ \\
\hline
$ (2^3, 1^2)$ & $(q - 1)^3 q^3(q + 1)^3(6q^6 + 6q^5 + 10q^4 + 5q^3 + 5q^2 + q + 1) $ \\
\hline
$ (2^2, 1^4)$ & $(q - 1)^2 q(q + 1)(6q^8 + 11q^7 + 16q^6 + 15q^5 + 14q^4 + 8q^3 + 5q^2 + 2q + 1)$ \\
\hline
$ (2, 1^6)$ & $(q - 1)(q + 1)^2(3q^4 + 2q^2 + 1) $ \\
\hline
$ (1^8)$ & $1$ \\
\hline
\end{tabular}
    \caption{\small Calculation of $\F_{\mu\lambda}$ when $\lambda=(2,2,2,2)$}
    \label{Table2}
\end{table}
\end{remark}

 When $\lambda=(1^n)$, Yip~\cite{Yip}  obtained several interesting closed formulas of $\F_{\mu\lambda}(q)$, including for the case $
\mu=(k+1,1^{n-k-1})$. 
\cref{hook_formula} below generalizes Yip's results.  Consider an arbitrary partition $\lambda=(\lambda_1,\dots,\lambda_s)$ of $n$ and a hook partition $\mu=(k+1,1^{n-k-1})$ where $k\leq s-1$ (the latter condition is equivalent to $\lambda\less \mu'$). Recall that $[n]_{1/q}=q^{1-n}[n]_q$.
\begin{theorem}\label{hook_formula} Let $\lambda=(\lambda_1,\dots,\lambda_s)$ be a partition of $n$ and let $\mu=(k+1,1^{n-k-1})$ with $k\leq s-1$. Then
\begin{equation}
\F_{\mu\lambda}(q)=(q-1)^kq^{\binom{n}{2}-\binom{n-k}{2}-k}\sum_{j=2}^{s-k+1}[\lambda_1+\dots+\lambda_{j-1}]_{1/q}\sum_{\substack{A\subseteq \{2,\dots,s\}\\
|A|=k\\ \min A=j}}\,\prod_{i\in A}[\lambda_i]_{1/q}
\, .
\end{equation}
\end{theorem}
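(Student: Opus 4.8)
The plan is to specialize the general formula of Cref{mainthm:Fmula-comb}, or rather its tableau reformulation in Cref{Ram-Schlosser}, to the hook partition $\mu = (k+1, 1^{n-k-1})$, and then reorganize the resulting sum over semistandard tableaux into the form claimed. Since $\mu' = (n-k, 1^k)$ — itself a hook of arm length $n-k-1$ and leg length $k$ — the combinatorics of $\ssyt(\mu', \lambda)$ is very constrained, which is what makes the closed form possible. I would begin by verifying the prefactor: with $\mu = (k+1, 1^{n-k-1})$ one has $\ell(\mu) = n-k$, and a direct calculation gives $\binom{n}{2} + \ell(\mu) - \fn(\mu) - n = \binom{n}{2} - \binom{n-k}{2} - k$, matching the exponent of $q$ in the statement once one also accounts for the passage from $b_{\mu'\lambda}(1/q)$ to the $q$-integers $[\cdot]_{1/q}$; the factor $(q-1)^{n - \ell(\mu)} = (q-1)^k$ is immediate.

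Next I would describe the tableaux in $\ssyt(\mu', \lambda)$ explicitly. A semistandard tableau of hook shape $(n-k, 1^k)$ is determined by: (i) the first row, which must be weakly increasing and contain the smallest entry $1$ with multiplicity $\lambda_1$ forced into positions $(1,1),\dots,(1,\lambda_1)$ (since column-strictness forbids $1$ below the first row), and more generally the first row consists of all letters not placed in the first column; (ii) the first column below the corner, which is a strictly increasing sequence of $k$ letters, say indexed by a $k$-subset $A = \{a_1 < \dots < a_k\} \subseteq \{2,\dots,s\}$ (the letter $1$ cannot appear in the leg, and the multiplicities $\lambda_i$ constrain which letters can appear). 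The content condition then forces: letter $i$ appears $\lambda_i - [i \in A]$ times in the first row and $[i \in A]$ times in the leg. So $\ssyt(\mu',\lambda)$ is naturally indexed by $k$-subsets $A \subseteq \{2,\dots,s\}$, with one mild feasibility constraint coming from the requirement that the first row be nonempty at the corner and weakly increasing (always satisfiable here). Writing $j = \min A$ will produce the outer sum over $j$ in the statement.

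Then I would compute $s_q(T)$ and $r_q(T)$ for the tableau $T = T_A$ corresponding to a subset $A$. For $s_q$: in row $1$ the composition $\beta^i$ has a single nonzero part $\lambda_i - [i\in A]$, so ${\lambda_i \brack \beta^i}_q = {\lambda_i \brack \lambda_i - [i\in A]}_q$, which equals $1$ if $i \notin A$ and $[\lambda_i]_q$ if $i \in A$; hence $s_q(T_A) = \prod_{i \in A}[\lambda_i]_q$. For $r_q$: the only boxes with $i \geq 2$ are the $k$ boxes of the leg, at positions $(2,1), (3,1), \dots, (k+1,1)$, and for the box in row $i$ of the leg the quantity $r_{i,1}(T)$ counts columns $j' \geq 1$ with entry in row $i-1$ strictly less than the leg entry $a_{i-1}$ (indexing the leg letters $a_1 < \dots < a_k$, with the corner in "row $1$"). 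For the top leg box, this counts the entries in the first row strictly less than $a_1 = \min A = j$, which is exactly $\lambda_1 + \dots + \lambda_{j-1}$ (all letters $< j$ sit in row $1$); for the lower leg boxes the count telescopes similarly. This is the step I expect to be the main obstacle: carefully bookkeeping $r_{i,1}$ for the interior leg boxes and checking that the product $r_q(T_A)$ collapses to a single factor $[\lambda_1 + \dots + \lambda_{j-1}]_q$ (with $j = \min A$) rather than something more complicated — the point being that once past the minimum element of $A$, the row above any lower leg box is a single box whose entry equals the previous leg letter, contributing a factor $[1]_q = 1$.

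Finally, assembling the pieces: $b_{\mu'\lambda}(q) = \sum_{A} s_q(T_A) r_q(T_A) = \sum_{j} [\lambda_1 + \dots + \lambda_{j-1}]_q \sum_{\min A = j} \prod_{i \in A}[\lambda_i]_q$, and substituting into Cref{mainthm:Fmula-comb}(1) with $q \mapsto 1/q$ and the normalization $[m]_{1/q} = q^{1-m}[m]_q$ converts every $q$-integer to its $1/q$-counterpart and produces exactly the power of $q$ displayed in the statement (the extra powers of $q$ from the $s$ factors of $[\lambda_i]_{1/q}$ and from $[\lambda_1+\dots+\lambda_{j-1}]_{1/q}$ must be reconciled with $\binom{n}{2} - \binom{n-k}{2} - k$; this is a routine exponent check). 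To close, I would note that the condition $k \leq s-1$ is exactly $\lambda \less \mu'$ (since $\mu' = (n-k, 1^k)$ has length $k+1$, and domination $\lambda \less \mu'$ forces $\ell(\lambda) = s \geq \ell(\mu') = k+1$ together with the partial-sum inequalities, all of which hold precisely when $s \geq k+1$), so the hypothesis of Cref{mainthm:Fmula-comb} is met and the cross-check with Yip's formula at $\lambda = (1^n)$ (where the inner sum over $A$ has a single term) recovers $\F_{(k+1,1^{n-k-1}),(1^n)}(q) = (q-1)^k q^{\binom{n}{2} - \binom{n-k}{2} - k}[n-k]_{1/q}\binom{s-1}{k}_{\text{suitably interpreted}}$, confirming consistency.
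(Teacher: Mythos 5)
Your proposal is correct and takes essentially the same route as the paper: specialize part (1) of \cref{mainthm:Fmula-comb} together with the Ram--Schlosser expression \cref{Ram-Schlosser}, observe that semistandard tableaux of hook shape $\mu'=(n-k,1^k)$ and content $\lambda$ are indexed by $k$-subsets $A\subseteq\{2,\dots,s\}$, compute $s_q(T_A)=\prod_{i\in A}[\lambda_i]_q$ and $r_q(T_A)=[\lambda_1+\cdots+\lambda_{j-1}]_q$ with $j=\min A$ (the lower leg boxes each contributing $[1]_q=1$), and substitute $b_{\mu'\lambda}(1/q)$ back in; note also that no further exponent ``reconciliation'' is needed, since the theorem's formula is stated directly in terms of $[\cdot]_{1/q}$ and $b_{\mu'\lambda}(1/q)$ is exactly the displayed double sum. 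The only slip is in your closing consistency check: for $\lambda=(1^n)$ the inner sum over $A$ with $\min A=j$ has $\binom{n-j}{k-1}$ terms rather than a single one, which is why the paper's specialization \cref{Yip-Formula} involves $\sum_j[j-1]_{1/q}\binom{n-j}{k-1}$; but this aside is not part of your argument and does not affect its validity.
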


\begin{example} 
We illustrate some special cases of \cref{hook_formula}.
For $\lambda=(1^n)$, from~\cref{hook_formula} and the identity
$\binom{n}{k}+\binom{n-1}{k}+\cdots+\binom{k}{k}=\binom{n+1}{k+1}$, we obtain
\begin{equation}\label{Yip-Formula}
\begin{split}
\F_{\mu\lambda}(q)&=(q-1)^kq^{\binom{n}{2}-\binom{n-k}{2}-k}\sum_{j=2}^{n-k+1}[j-1]_{1/q}\binom{n-j}{k-1}\\
&=(q-1)^kq^{\binom{n-1}{2}-\binom{n-k-1}{2}}\sum_{j=1}^{n-k}\binom{n-j}{k}q^{-(j-1)}.
\end{split}
\end{equation}
When $\lambda=(2^l)\vdash n=2l$ with $k\leq l-1$, a similar argument to the one above gives the identity:
\begin{equation}
\begin{split}
\F_{\mu\lambda}(q)&=(q-1)^kq^{\binom{n}{2}-\binom{n-k}{2}-k}\sum_{j=2}^{l-k+1}[2(j-1)]_{1/q}\binom{l-j}{k-1}([2]_{1/q})^k\\
&=(q-1)^k(q+1)^kq^{\binom{n-1}{2}-\binom{n-k-1}{2}-k}\sum_{j=1}^{2(l-k)}\binom{l-\lceil j/2\rceil}{k}q^{-(j-1)}\, ,
\end{split}
\end{equation}
where $\lceil x \rceil$ is the ceiling function.  
\end{example}

\section{The division algorithm}
The goal of this section is to describe a
 parabolic extension of the \emph{division algorithm}, originally introduced in~\cites{Borodin,Kirillov} in the special case of $\lambda=(1^n)$,
which will be used in~\cref{sec:rec} to 
find a recursive formula for $\F_{\mu\lambda}$.


Throughout this section, 
we consider matrices  over an arbitrary field $\FF$, 
and we use
  $\mathcal C_\mu=\mathcal C_\mu(\FF)$ to denote the $\GL_n(\FF)$-conjugacy class of $J_\mu$. We abbreviate $\u_\Lambda(\FF)$ to $\u_\Lambda$.

\subsection{Properties of the rank sequence}\label{seq: Division algorithm}  
We fix an integer $n \geq 2$.

\begin{definition}[Eligible partition and admissible matrix]\label{eligible partition} Let $1 \leq m \leq n$, and let $\nu$ be a partition of $n-m$. A partition $\mu$ of $n$ is called $(\nu,m)$-eligible  if there exists  an $(n-m)\times m$ matrix $\Z$, such that
\begin{equation}\label{admisible-con}
\left(\begin{array}{c|c}
J_{\nu} & \Z\\ \hline
0_{m\times (n-m)} & 0_{m\times m}
\end{array}\right)\in \C_\mu. 
\end{equation}
The set of all $(\nu,m)$-eligible partitions will be denoted by $\HH_{m}(\nu)$. Moreover a matrix $\Z$ satisfying~\cref{admisible-con} will be called a $(\mu,\nu,m)$-admissible matrix. 
\end{definition}

\begin{definition}[Rank sequence]\label{rank-seq} Let $\Z$ be a  $(\mu,\nu,m)$-admissible matrix with
\begin{equation}\label{matrixA}
A:=\left(\begin{array}{c|c}
J_{\nu} & \Z\\ \hline
0_{m\times (n-m)} & 0_{m\times m}
\end{array}\right)\in \C_\mu. 
\end{equation}
The rank sequence of $\Z$ is defined by $\mathfrak{r}(\Z)=(\omega_j)_{j\geq 0}$ where
\begin{equation}\label{omega_i}
\omega_j=\omega_j(\mu,\nu,\Z)\defin
\begin{cases}
m & j=0,\\
\rk(A^j)-\rk(J_{\nu}^j) &  j\geq 1. 
\end{cases}
\end{equation}
\end{definition}

For any $j\geq 1$ we have 
\begin{equation}\label{A^j}
A^j=\left(\begin{array}{c|c}
J_{\nu}^j & J_{\nu}^{j-1}\Z\\ \hline
0_{m\times (n-m)} & 0_{m\times m}
\end{array}\right),
\end{equation}
and, following~\cite{Yip}, we can describe $J_{\nu}^{j-1}\Z$ combinatorially in terms of $\Z$ and $\nu$ as follows. Let $z_1,\dots,z_{n-m}$ be the rows of $\Z$ from top to bottom.  We put these vectors, from top right to bottom left, in the boxes of the Young diagram of $\nu$. To obtain the rows of  $J_{\nu}^{j-1}\Z$ we shift the entries in each row of $\nu$ to the right by $j-1$ boxes (the vectors 
that fall out are eliminated
and we write the zero vector in the boxes that become vacant).  
Henceforth the matrix whose rows are the ones that appear in the $j$'th column of $\nu$ after $j-1$ shifts will be denoted by $\Z_j$.
\begin{example}\label{Yip-Z} Let $\nu=(4,3,1)\vdash 8$ and consider the following diagrams filled with vectors:  
\medskip

\begin{center}
\begin{ytableau}
z_4 & z_3 & z_2 & z_1 \\
z_7 &z_6 & z_5  \\
z_8
\end{ytableau},\qquad
 \begin{ytableau}
0 & z_4 & z_3 & z_2 \\
0 &z_7 & z_6  \\
0
\end{ytableau},\qquad
\begin{ytableau}
0 & 0 & z_4 & z_3 \\
0 & 0 & z_7  \\
0
\end{ytableau}\qquad
\begin{ytableau}
0 & 0 & 0 & z_4 \\
0 & 0 & 0  \\
0
\end{ytableau}
\end{center}
\medskip
The first diagram says that the rows of $\Z$ are $z_1,\dots, z_8$, while the second diagram indicates that the rows of $J_\nu \Z$ are  $z_2,z_3,z_4,0,z_6,z_7,0,0$  and so  on. We also have 
$$
\Z_1=\begin{pmatrix}
z_4\\z_7\\z_8
\end{pmatrix},\qquad 
\Z_2=\begin{pmatrix}
z_4\\z_7
\end{pmatrix},\qquad
\Z_3=\begin{pmatrix}
z_4\\z_7
\end{pmatrix},\qquad
\Z_4=\begin{pmatrix}
z_4
\end{pmatrix}.\qquad   
$$
\end{example}

From nilpotence of $J_\nu$ and ~\cref{A^j} it follows that
\begin{equation}
\label{eqq:admi}\omega_j=\rk(A^j)-\rk(J_{\nu}^j)=\rk(\Z_j),\qquad j\geq 1.
\end{equation}
The following description of $\Z_j$ also emerges from~\cref{Yip-Z}. 
\begin{lemma}\label{Z_j-description} Let $\nu=(\nu_1,\dots,\nu_k)$ be a partition of $n-m$ with the conjugate $\nu'=(\nu'_1,\dots,\nu'_\ell)$ and let $\Z$ be a $(\mu,\nu,m)$-admissible matrix. Then for $1\leq j\leq \ell$ we have
\begin{equation}\label{Z_j-matrix}
    \Z_j=\begin{pmatrix}
    z_{\varepsilon(1)}\\ \vdots\\ z_{\varepsilon(\nu_j')}
\end{pmatrix}, \qquad\quad  \varepsilon(l)\defin\sum_{i\leq l}\nu_i
\end{equation}
\end{lemma}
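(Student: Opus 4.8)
The plan is to read $\Z_j$ directly off the combinatorial model for $J_\nu^{j-1}\Z$ recalled just before the lemma (following~\cite{Yip}); no new idea is needed beyond careful bookkeeping of indices, and the statement will follow by unwinding that description.

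First I would make the initial filling precise. Placing the rows $z_1,\dots,z_{n-m}$ of $\Z$ into the boxes of the Young diagram of $\nu$ ``from top right to bottom left'' means that the box in row $i$, column $c$, with $1\le c\le \nu_i$, receives the vector $z_{\varepsilon(i)-c+1}$, where $\varepsilon(l)=\nu_1+\cdots+\nu_l$ and $\varepsilon(0)=0$; in particular the leftmost box of row $i$ (i.e.\ $c=1$) carries $z_{\varepsilon(i)}$. I would sanity-check this against~\cref{Yip-Z}, where $\nu=(4,3,1)$, so $\varepsilon=(0,4,7,8)$, and indeed the first column of the first diagram reads $z_4,z_7,z_8$ from top to bottom.

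Next I would record the effect of the $j-1$ right-shifts: in row $i$, the box in column $c$ of the shifted diagram holds the original entry of column $c-(j-1)$ when $c\ge j$, and the zero vector when $c<j$. Fixing $j$, the column-$j$ entry in row $i$ of the shifted diagram therefore exists precisely when $\nu_i\ge j$, i.e.\ when $i\le\nu'_j$, and in that case it equals the original column-$1$ entry of row $i$, namely $z_{\varepsilon(i)}$. Since $\nu$ is weakly decreasing, the rows with $\nu_i\ge j$ are exactly $i=1,\dots,\nu'_j$ (and $\nu'_j\ge 1$ for $1\le j\le\ell=\ell(\nu')=\nu_1$, so this list is nonempty); reading their column-$j$ entries from top to bottom shows that $\Z_j$ is the matrix whose rows are $z_{\varepsilon(1)},\dots,z_{\varepsilon(\nu'_j)}$, which is~\cref{Z_j-matrix}. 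A last check against~\cref{Yip-Z}: $\nu'_2=\nu'_3=2$ and $\nu'_4=1$ give $\Z_2,\Z_3$ with rows $z_4,z_7$ and $\Z_4=(z_4)$, as displayed there.

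There is no genuine obstacle here; the only care needed is in fixing the orientation of the initial filling and in noticing that, after shifting, column $j$ is populated exactly in those rows long enough to possess a column $j$. If one prefers an argument free of the combinatorial picture, one can work straight from~\cref{A^j}: since $J_\nu$ is block-diagonal with blocks $J_{\nu_1},\dots,J_{\nu_k}$, the matrix $J_\nu^{j-1}\Z$ is obtained from $\Z$ by shifting the rows within each block and annihilating the overhang, and tracking which rows of $\Z$ end up in the entries that form column $j$ of the diagram after $j-1$ shifts again yields $z_{\varepsilon(1)},\dots,z_{\varepsilon(\nu'_j)}$.
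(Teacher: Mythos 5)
Your proposal is correct and follows exactly the route the paper intends: the paper treats the lemma as immediate from the combinatorial description of $J_\nu^{j-1}\Z$ (filling the diagram of $\nu$ from top right to bottom left and shifting, as illustrated in \cref{Yip-Z}), and your argument simply makes that bookkeeping explicit — box $(i,c)$ holds $z_{\varepsilon(i)-c+1}$, so after $j-1$ shifts column $j$ is populated precisely in rows $i\le\nu'_j$ by the original column-$1$ entries $z_{\varepsilon(i)}$. No gap; your index conventions and the sanity checks against the example all agree with the paper.
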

\begin{definition}[Essential and free vectors]
\label{dfn-free-adm}
Let $\Z$ be a $(\mu,\nu,m)$-admissible matrix. We call the rows 
$\{z_{\varepsilon(1)},\ldots,
z_{\varepsilon(k)}\}$ of $\Z$ the \emph{essential vectors}, and its other rows the  \emph{free vectors}.
\end{definition}

\begin{remark}\label{new-entires}
    \cref{Z_j-description} in particular implies that $\Z_1$ has $k$ rows and $\Z_\ell$ has $\nu'_\ell$ rows. Moreover the number of new rows in $\Z_j$ that were not in $\Z_{j+1}$ is $\nu'_j-\nu'_{j+1}$.
\end{remark}

The proof of the following lemma 
from~\cite[Lemma 2]{Yip}
is straightforward. 

\begin{lemma}\label{rankJmu}  Let $\nu=(\nu_1,\dots,\nu_k)$ be a partition of $n$ with conjugate  $\nu'=(\nu'_1,\dots,\nu'_\ell)$. Then
\begin{equation}\label{Jordan-rank}
\rk\left(J_\nu^j\right)=
\begin{cases}
\sum_{i=j+1}^\ell \nu'_i  & 1\leq j\leq \ell-1,\\
0 & j\geq \ell.
\end{cases}
\end{equation}
\end{lemma}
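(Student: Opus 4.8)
The plan is to reduce to the case of a single Jordan block and then translate the resulting expression into the language of the conjugate partition by a double-counting argument.

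First I would note that $J_\nu$ is block diagonal with diagonal blocks $J_{\nu_1},\dots,J_{\nu_k}$, hence so is $J_\nu^j$, with diagonal blocks $J_{\nu_1}^j,\dots,J_{\nu_k}^j$. Consequently $\rk(J_\nu^j)=\sum_{i=1}^k\rk(J_{\nu_i}^j)$. For a single $m\times m$ nilpotent Jordan block $J_m$ one checks directly that $J_m^j$ has exactly $\max(m-j,0)$ nonzero entries, all equal to $1$ and lying in pairwise distinct rows and pairwise distinct columns (and $J_m^j=0$ when $j\ge m$); hence $\rk(J_m^j)=\max(m-j,0)$. Combining these two observations gives
\[
\rk(J_\nu^j)=\sum_{i=1}^k\max(\nu_i-j,0)=\sum_{i\,:\,\nu_i>j}(\nu_i-j).
\]

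Finally I would rewrite the right-hand side via the standard conjugate-partition identity
\[
\sum_{i=1}^k\max(\nu_i-j,0)=\sum_{i=1}^k\#\{t>j:\nu_i\ge t\}=\sum_{t>j}\#\{i:\nu_i\ge t\}=\sum_{t>j}\nu_t',
\]
where the middle equality is a change in the order of summation and the last one is the definition of $\nu'$. Since $\ell=\ell(\nu')=\nu_1$, for $j\ge \ell$ every term $\max(\nu_i-j,0)$ vanishes, so $\rk(J_\nu^j)=0$; and for $1\le j\le \ell-1$ we have $\nu_t'=0$ whenever $t>\ell$, so $\sum_{t>j}\nu_t'=\sum_{t=j+1}^{\ell}\nu_t'$, which is the claimed formula.

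There is no genuine obstacle here; the only points requiring care are the off-by-one bookkeeping in the conjugate-partition identity and the treatment of the boundary case $j=\ell$, both of which are routine. This is why the authors describe the argument as straightforward.
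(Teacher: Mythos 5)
Your proof is correct. The paper itself omits the argument, simply citing Yip's Lemma~2 and calling it straightforward, and your route --- block-diagonal decomposition of $J_\nu^j$, the computation $\rk(J_m^j)=\max(m-j,0)$ for a single Jordan block, and the double-counting identity $\sum_i\max(\nu_i-j,0)=\sum_{t>j}\nu'_t$ --- is precisely the standard argument being alluded to, with the boundary case $j\geq\ell$ handled correctly.
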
 
From this we deduce the following lemma. 
\begin{lemma}\label{lem:iffcondition} Let $\nu$ be a partition of $n-m$ with $\ell = \ell(\nu')=\nu_1$, and let $(\omega_j)_{j \geq 0}$ be the sequence defined in~\cref{omega_i}. The following statements hold:

\begin{enumerate}\setlength{\itemsep}{0.5em}
\item $\omega_{\ell+1}=0$.
\item $0\leq \cdots\leq \omega_{j+1}\leq \omega_j\leq \cdots\leq \omega_1\leq \omega_0=m$.
\item $0\leq \omega_j-\omega_{j+1}\leq \nu'_j-\nu'_{j+1}$ for $j\geq 1$.
\item $\mu_j'=\nu'_j+(\omega_{j-1}-\omega_{j})$ for $j\geq 1$.
\item $\sum_{j\geq 1}(\omega_{j-1}-\omega_j)=m.$
\end{enumerate} 
\end{lemma}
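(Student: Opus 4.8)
The plan is to reduce all five assertions to three facts already in hand: the identity $\omega_j=\rk(\Z_j)$ for $j\ge 1$ from~\cref{eqq:admi}; the combinatorial description of $\Z_j$ in~\cref{Z_j-description} together with~\cref{new-entires}; and the rank formula for nilpotent Jordan matrices in~\cref{rankJmu}. Write $\ell=\ell(\nu')=\nu_1$. Part~(1) is then immediate: the largest part of $\nu$ is $\nu_1=\ell$, so $J_\nu$ is nilpotent of index $\ell$ and $J_\nu^{\ell}=0$; the block form~\cref{A^j} gives $A^{\ell+1}=0$, hence $\omega_{\ell+1}=\rk(A^{\ell+1})-\rk(J_\nu^{\ell+1})=0$. (Equivalently, the $(\ell+1)$st column of the Young diagram of $\nu$ is empty, so $\Z_{\ell+1}$ has no rows.)

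For parts~(2) and~(3) I would use that, by~\cref{Z_j-description} and~\cref{new-entires}, for $j\ge 1$ the matrix $\Z_{j+1}$ is obtained from $\Z_j$ by deleting its last $\nu'_j-\nu'_{j+1}$ rows---equivalently, $\Z_j$ is $\Z_{j+1}$ with that many new rows adjoined. Deleting rows cannot raise the rank, so $\omega_{j+1}=\rk(\Z_{j+1})\le\rk(\Z_j)=\omega_j$ for $j\ge 1$, while $\omega_1=\rk(\Z_1)\le m=\omega_0$ since $\Z_1$ has only $m$ columns; together with $\omega_j\ge 0$ this gives the chain in~(2). Adjoining $r$ rows to a matrix raises its rank by at most $r$, so $\omega_j=\rk(\Z_j)\le\rk(\Z_{j+1})+(\nu'_j-\nu'_{j+1})=\omega_{j+1}+\nu'_j-\nu'_{j+1}$, which together with~(2) is~(3).

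The crux is~(4), which ties the rank sequence to the Jordan type $\mu$. First I would note that~\cref{eqq:admi} extends to $j=0$, since $\rk(A^0)=n=(n-m)+m=\rk(J_\nu^0)+\omega_0$; thus $\rk(A^j)=\rk(J_\nu^j)+\omega_j$ for every $j\ge 0$. As $A\in\C_\mu$ we have $\rk(A^j)=\rk(J_\mu^j)$, so~\cref{rankJmu} (telescoped, with $\rk(A^0)=n=|\mu|$) gives $\mu'_j=\rk(A^{j-1})-\rk(A^j)$ for $j\ge 1$, and the same lemma gives $\nu'_j=\rk(J_\nu^{j-1})-\rk(J_\nu^j)$ for $j\ge 1$. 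Substituting $\rk(A^i)=\rk(J_\nu^i)+\omega_i$ into the first of these and subtracting the second yields
\[
\mu'_j=\nu'_j+(\omega_{j-1}-\omega_j),\qquad j\ge 1,
\]
which is~(4). Part~(5) is then immediate by telescoping, $\sum_{j=1}^{\ell+1}(\omega_{j-1}-\omega_j)=\omega_0-\omega_{\ell+1}=m$ using~(1), or by summing~(4): $\sum_{j\ge 1}(\omega_{j-1}-\omega_j)=\sum_{j\ge 1}(\mu'_j-\nu'_j)=|\mu|-|\nu|=m$.

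I do not expect a genuine obstacle: once~\cref{eqq:admi},~\cref{Z_j-description}, and~\cref{rankJmu} are invoked the argument is short. The only points needing care are the boundary values of the rank sequence at $j=0$ that enter~(4), and applying~\cref{Z_j-description}/\cref{new-entires} only for $1\le j\le\ell$, so that $\Z_j$ is genuinely empty (hence of rank $0$) when $j>\ell$.
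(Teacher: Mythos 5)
Your proof is correct and follows essentially the same route as the paper: part (1) via $J_\nu^{\nu_1}=0$, parts (2)--(3) from the fact that the rows of $\Z_{j+1}$ are a subset of those of $\Z_j$ with $\nu'_j-\nu'_{j+1}$ rows removed (the paper phrases (3) through the nullity of $\Z_j^T$, which is the same estimate), and parts (4)--(5) by combining~\cref{eqq:admi} with~\cref{rankJmu} and telescoping. No gaps.
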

\begin{proof}
Note that $\ell=\nu_1$ and since
$J_{\nu}^\ell=J_{\nu}^{\nu_1}=0$, we obtain  $A^{\ell+1}=0$ which proves the first statement. For the second statement, note that  $\omega_j=\rk(\Z_j)$ and the set of rows of $\Z_{j+1}$ is a subset of the set of rows of $\Z_j$. Moreover, since $\Z_1$ has $m$ columns,  $\omega_1\leq m$.  For the third statement, observe that the matrix $\Z_j^T$ has $\nu'_j$ columns and thus its nullity is $\mathrm{Nul}_j:=\nu'_j-\omega_j$. Since the columns of $\Z_{j+1}^T$ form a subset of the columns of $\Z_j^T$, we have
$\mathrm{Nul}_j\geq \mathrm{Nul}_{j+1}$.
For the fourth one, note that by~\cref{rankJmu}
we have
$$
\rk(A^j)=\sum_{i\geq j+1}\mu'_j=n-\sum_{i\leq j}\mu'_i,\qquad \rk(J_{\nu}^j)=\sum_{j\geq j+1}\nu'_j=n-m-\sum_{i\leq j}\nu'_i. 
$$
Thus for $j\geq 1$ we have
$
\omega_j=m-\sum_{i\leq j}(\mu'_i-\nu'_i)$, or equivalently 
$\sum_{i\leq j}(\mu'_i-\nu'_i)=\omega_0-\omega_j$. Taking the difference of two such relations for successive values of $j$ yields the fourth statement. Finally, the last assertion follows from the first and the second ones. 
\end{proof}

\begin{lemma}\label{admissible-seq} Let $\mu$ and $\nu$ be partitions of $n$ and $n-m$, respectively. Set $\ell=\ell(\nu') = \nu_1$, and let $(\omega_j)_{j \geq 0}$ satisfy conditions (1)–(4) of~\cref{lem:iffcondition}. Then, there exist a $(\mu,\nu,m)$-admissible matrix $\Z$ whose rank sequence is $(\omega_j)_{j \geq 0}$.
\end{lemma}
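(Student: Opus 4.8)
The plan is to construct $\Z$ by choosing its essential rows so as to realize the prescribed rank sequence, and then deduce admissibility of $\Z$ for free. The key initial observation is that the rank sequence depends only on the essential rows of $\Z$: by~\cref{Z_j-description}, for $1\le j\le\ell$ the rows of $\Z_j$ are exactly $z_{\varepsilon(1)},\dots,z_{\varepsilon(\nu_j')}$, while for $j>\ell$ we have $\nu_j'=0$ and $\Z_j$ is the empty matrix. Moreover the identity $\rk(A^j)-\rk(J_\nu^j)=\rk(\Z_j)$ underlying~\cref{eqq:admi} is proved using only the block shape~\cref{A^j} and the nilpotence of $J_\nu$, hence it holds for \emph{any} $(n-m)\times m$ matrix $\Z$, admissible or not. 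So it suffices to exhibit vectors $v_i:=z_{\varepsilon(i)}\in\FF^m$, $1\le i\le k$ (where $k=\nu_1'$), with $\rk(v_1,\dots,v_{\nu_j'})=\omega_j$ for $1\le j\le\ell$, and then set every free row of $\Z$ to zero.

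To construct the $v_i$, first note that telescoping condition~(3) of~\cref{lem:iffcondition} and using $\omega_{\ell+1}=0=\nu_{\ell+1}'$ gives $\omega_1\le\nu_1'=k$, while condition~(2) gives $\omega_1\le m$. By~(2) and~(3), the increase $\omega_j-\omega_{j+1}$ lies in $\{0,1,\dots,\nu_j'-\nu_{j+1}'\}$ for every $j\ge1$, so one may build a non-decreasing function $f\colon\{0,1,\dots,k\}\to\{0,1,\dots,m\}$ with $f(0)=0$, $f(\nu_j')=\omega_j$ for $1\le j\le\ell$, and $f(p)-f(p-1)\in\{0,1\}$ for all $p$ --- greedily, by distributing the $\omega_j-\omega_{j+1}$ required unit increases among the $\nu_j'-\nu_{j+1}'$ positions of each block. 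Letting $e_1,\dots,e_m$ be the standard basis of $\FF^m$, set $v_p:=e_{f(p)}$ if $f(p)>f(p-1)$ and $v_p:=0$ otherwise. Since the nonzero values among $f(1),\dots,f(p)$ are exactly $1,\dots,f(p)$, we get $\rk(v_1,\dots,v_p)=f(p)$, hence $\rk(\Z_j)=\omega_j$ for all $j\ge1$; together with $\omega_0=m$ by definition, this shows $\mathfrak{r}(\Z)=(\omega_j)_{j\ge0}$.

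It remains to verify that this $\Z$ is $(\mu,\nu,m)$-admissible, i.e.\ that $A\in\C_\mu$. By the previous paragraph $\rk(A^j)=\rk(J_\nu^j)+\omega_j$ for all $j\ge0$ (with $\rk(J_\nu^0)=n-m$ and $\omega_0=m$, so $\rk(A^0)=n$), and $A$ is nilpotent by~\cref{A^j}. The Jordan type $\mu^{\ast}$ of a nilpotent matrix is determined by its rank sequence via $(\mu^{\ast})_j'=\rk(A^{j-1})-\rk(A^j)$; combining this with the consequence $\rk(J_\nu^{j-1})-\rk(J_\nu^j)=\nu_j'$ of~\cref{rankJmu} and with condition~(4) of~\cref{lem:iffcondition} gives $(\mu^{\ast})_j'=\nu_j'+\omega_{j-1}-\omega_j=\mu_j'$, so $\mu^{\ast}=\mu$ and $A\in\C_\mu$. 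I expect the main difficulty to be purely organizational --- correctly distributing the increments within the blocks so the running rank grows by $0$ or $1$ at each step (the role of $f$) --- together with the small but essential point that one must not assume admissibility of $\Z$ in order to invoke~\cref{eqq:admi}: admissibility is recovered only afterwards, from the correspondence between rank sequences and Jordan types of nilpotent matrices.
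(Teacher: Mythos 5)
Your proposal is correct and follows essentially the same route as the paper: build the essential rows so that $\rk(\Z_j)=\omega_j$ (made possible by conditions (2)–(3), exactly as in the paper's inductive selection of new essential vectors, of which your function $f$ and standard-basis vectors are just an explicit implementation), take the free rows arbitrarily, and then read off the Jordan type from the rank sequence via \cref{rankJmu} and condition (4) to get $A\in\C_\mu$. Your explicit remark that \cref{eqq:admi} holds for an arbitrary $\Z$ of this block shape, so admissibility is a conclusion rather than a hypothesis, is a welcome clarification but not a different argument.
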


\begin{proof}

We construct an $(n-m) \times m$ matrix $\Z$ with row vectors $\{z_1, \dots, z_{n-m}\}$ in $\FF^m$ such that $\rk(\Z_j) = \omega_j$ for $1 \leq j \leq \ell$, where $\Z_j$ is as in~\cref{Z_j-matrix}. 
 The value of $\rk(\Z_j)$ is determined by the essential vectors that occur in $\Z_j$. 

We construct the essential vectors inductively, starting with those that occur in $\Z_\ell$.  
By condition (3) of \cref{admissible-seq},  we have $\omega_\ell \leq \nu'_\ell$, and by~\cref{new-entires}, the number of rows in $\Z_\ell$ is $\nu'_\ell$. Thus, we can select $\nu'_\ell$ 
rows 
 in $\FF^m$ 
 for $\Z_\ell$
 such that $\rk(\Z_\ell) = \omega_\ell$. 
To construct $\Z_j$ from $\Z_{j+1}$ with $\rk(\Z_j) = \omega_j$, observe from~\cref{new-entires} that the number of new essential vectors in $\Z_j$ not appearing in $\Z_{j+1}$ is $\nu'_j - \nu'_{j+1}$, which by condition (4) of \cref{admissible-seq} is at least $\omega_j - \omega_{j+1}$. Hence, we can select new essential vectors in $\Z_j$, linearly independent of those in $\Z_{j+1}$, such that their span has dimension $\omega_j - \omega_{j+1}$ and so we obtain $\Z_j$ with $\rk(\Z_j)=\omega_j$. Now construct $\Z$ by choosing the free vectors  arbitrarily. This completes the construction of $\Z$. From~\cref{rankJmu} and the assumption $\mu'_j = \nu'_j + (\omega_{j-1} - \omega_j)$ for $j \geq 1$, we deduce that $A \in \C_\mu$.
\end{proof}

\begin{example}
Let $\mu = (4,4,2,1,1,1)$, $\nu = (4,3,1)$, and $\omega_0 = 5, \, \omega_1 = 2, \, \omega_2 = \omega_3 = 1, \, \omega_4 = 0$, with $n = 13$ and $m = 5$. The essential vectors are $\{z_4, z_7, z_8\}$ as in~\cref{Yip-Z}. In this example we can pick $z_4=0$ and two linearly independent vectors $z_7,z_8$. The rest of vectors are arbitrary.
\end{example}
We are now ready to state the main theorem of this section. 

\begin{theorem}[Division algorithm]\label{Div-alg} Let $1\leq m\leq n$ and let  $\nu$ be a partition of $n-m$. Then $\mu\in \HH_{m}(\nu)$ if and only if $\mu'/\nu'$ is a horizontal strip of size $m$.
\end{theorem}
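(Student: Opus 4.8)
The plan is to prove the two implications of the biconditional separately, relying throughout on the combinatorial description of $\Z_j$ in \cref{Z_j-description} together with the rank bookkeeping of \cref{lem:iffcondition} and the existence statement in \cref{admissible-seq}.

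First I would prove the forward direction: if $\mu \in \H_m(\nu)$, then $\mu'/\nu'$ is a horizontal $m$-strip. Given a $(\mu,\nu,m)$-admissible matrix $\Z$ with rank sequence $(\omega_j)_{j\ge 0}$, \cref{lem:iffcondition}(4) gives $\mu_j' = \nu_j' + (\omega_{j-1} - \omega_j)$ for all $j\ge 1$. Since $\omega_{j-1}-\omega_j \ge 0$ by part (2), we get $\mu_j' \ge \nu_j'$, so $\nu' \subseteq \mu'$. Next, part (3) gives $\omega_j - \omega_{j+1} \le \nu_j' - \nu_{j+1}'$, which rearranges (using part (4) applied at indices $j$ and $j+1$) to $\mu_{j+1}' \le \nu_j'$; combined with $\mu_j' \ge \nu_j'$ this yields $\mu_{j+1}' \le \nu_j' \le \mu_j'$, i.e.\ $\mu'/\nu'$ has at most one box in each column, so it is a horizontal strip. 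Finally its size is $\sum_{j\ge 1}(\mu_j'-\nu_j') = \sum_{j\ge 1}(\omega_{j-1}-\omega_j) = m$ by part (5). This direction is essentially just unwinding the lemma already established.

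For the converse, suppose $\mu'/\nu'$ is a horizontal $m$-strip; I must exhibit a $(\mu,\nu,m)$-admissible matrix. The natural move is to \emph{define} a candidate sequence $(\omega_j)_{j\ge 0}$ by $\omega_0 := m$ and $\omega_j := m - \sum_{i\le j}(\mu_i' - \nu_i')$ for $j\ge 1$ (this is forced by \cref{lem:iffcondition}(4)), and then check it satisfies hypotheses (1)–(4) of \cref{admissible-seq}; the existence of the admissible matrix then follows immediately from that lemma. Condition (4) holds by construction. For (2), monotonicity $\omega_{j+1}\le\omega_j$ is $\mu_{j+1}' \ge \nu_{j+1}'$ (from $\nu'\subseteq\mu'$), and $\omega_1 \le m$ is $\mu_1' \ge \nu_1'$; nonnegativity needs $\sum_{i\le j}(\mu_i'-\nu_i') \le m = |\mu'/\nu'|$, which holds since all terms are nonnegative and the total is $m$ (this also gives (5) and, once $\omega_{\ell+1}=0$, condition (1): indeed $\sum_{i\le \ell}(\mu_i'-\nu_i') = m$ because columns beyond $\ell = \nu_1$ of $\nu'$ are empty and, the strip being horizontal of size $m$ sitting on top of $\nu'$, all its boxes lie in columns $1,\dots,\ell$ or column $\ell+1$ — I will need a small argument that actually forces the count to exhaust at $j=\ell$, or more simply note $\mu_{\ell+1}' \le \nu_\ell' = \nu_\ell'$ gives $\omega_\ell \le \nu_\ell'$ hence ... ). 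Condition (3), $\omega_j - \omega_{j+1} \le \nu_j' - \nu_{j+1}'$, rearranges to $\mu_{j+1}' \le \nu_j'$, which is precisely the horizontal-strip condition $\mu_{j+1}' \le \nu_j'$ — this is the crucial input and is exactly what ``no two boxes of $\mu'/\nu'$ in the same column'' means after translating via conjugation.

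The main obstacle I anticipate is the careful handling of the index range and the ``boundary'' conditions (1) and (5): one must be sure that the horizontal-strip hypothesis on $\mu'/\nu'$, where $\nu$ is a partition of $n-m$ but $\mu$ is a partition of $n$ with possibly more columns, correctly forces $\omega_{\ell+1}=0$ and $\sum_j(\omega_{j-1}-\omega_j)=m$ rather than something smaller. Concretely I would note $\ell(\mu') = \mu_1 \le \nu_1 + 1 = \ell+1$ (since the strip adds at most one box to the first row of $\mu$ compared to $\nu$, wait — to its length), argue $\omega_j = 0$ for $j \ge \ell+1$ directly from $\mu_j' = \nu_j' = 0$ there, and then (1) and (5) are automatic. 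Once those bookkeeping points are pinned down, both directions reduce to the dictionary (already set up in \cref{lem:iffcondition} and \cref{admissible-seq}) between admissible matrices, their rank sequences, and inequalities among the $\mu_j', \nu_j'$, so I would keep the writeup short and let those two lemmas do the heavy lifting.
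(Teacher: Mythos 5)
Your proposal is correct and follows essentially the same route as the paper: the forward direction unwinds \cref{lem:iffcondition} to get containment, the interlacing condition $\mu'_{j+1}\le\nu'_j$, and size $m$, while the converse defines the forced sequence $\omega_j=m-\sum_{i\le j}(\mu'_i-\nu'_i)$, checks the conditions of \cref{lem:iffcondition} using the horizontal-strip hypothesis, and invokes \cref{admissible-seq}. The boundary point you flag is handled in the paper exactly as you suggest, by noting $\ell(\mu')\le\ell+1$ (else the strip condition fails at column $\ell+1$), which makes $\omega_{\ell+1}=0$ and the size count automatic.
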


\begin{proof}
Assume $\mu\in \HH_m(\nu)$. From parts (4) and (5) of~\cref{lem:iffcondition} it follows that  $\mu'$ is obtained by adding $m$ boxes into the Young diagram of $\nu'$. From parts (1) and (3)  of~\cref{lem:iffcondition} it follows that no two boxes are in the same row of $\mu$. Thus,  $\mu'/\nu'$ is a horizontal strip of size $m$. 
\smallskip

Conversely, assume that $\mu'/\nu'$ is a horizontal strip of size $m$ and let $\ell=\ell(\nu')$. Then $\ell(\mu')\leq \ell+1$ since otherwise $\mu'/\nu'$ cannot be a horizontal strip. Choose non-negative integers $\alpha_j$ for $1\leq j\leq \ell+1$ with $\mu'_j=\nu'_j+\alpha_j$ and $\alpha_1+\cdots+\alpha_{\ell+1}=m$. Since $\mu'/\nu'$ is a horizontal strip, we have $\alpha_{j+1}\leq \nu'_{j}-\nu_{j+1}'$. Define $\omega_0=m$ and $\omega_j=\omega_{j-1}-\alpha_j$ for $j\geq 1$. It is straightforward to  verify that the $(\omega_j)_{j\geq 0}$ satisfy conditions (1)–(5) of~\cref{lem:iffcondition}. Hence, by~\cref{admissible-seq} there exists a corresponding $(\mu,\nu,m)$-admissible matrix $\Z$,  which proves that $\mu\in \HH_m(\nu)$.
\end{proof}

\begin{example} Let $m=2$ and let $\nu=(3,2,2)$ with the conjugate $\nu'=(3,3,1)$. Then $\HH_m(\nu)$ has five elements and the following Young diagrams are the conjugates of $\mu\in\HH_m(\nu)$
\medskip

\begin{center}
\scriptsize
\begin{ytableau}
{} &  & & *(gray) & *(gray) \\
 & &   \\
  
\end{ytableau}\qquad\quad 
\begin{ytableau}
{} &  & & *(gray)  \\
 & &   \\
  &  *(gray)
\end{ytableau}\qquad\quad
\begin{ytableau}
{} &  & & *(gray)  \\
 & &   \\
  \\
 *(gray)
\end{ytableau}\qquad\quad
\begin{ytableau}
{} &  &  \\
 & &   \\
  & *(gray) & *(gray)
\end{ytableau}\qquad\quad
\begin{ytableau}
{} &  &  \\
 & &   \\
  &  *(gray)\\
  *(gray)
\end{ytableau}
\end{center}
\end{example}

\subsection{Non-emptiness of $\C_\mu\cap \u_\Lambda$} Given a composition $\Lambda=(\Lambda_1,\dots,\Lambda_s)$ on $n$, the partition of $n$ obtained by sorting  $\Lambda$ will be denoted by $\lambda=(\lambda_1,\dots,\lambda_s)$. 
We
define $\Lambda^*=(\Lambda_1,\dots,\Lambda_{s-1})$ and 
set $\lambda^*=\mathrm{sor
t}(\Lambda^*)$.
We can write any $X\in\u_\Lambda$ as
\begin{equation}\label{X_presentation}
X=\left(\begin{array}{c|c}
X_1 & Y\\ \hline
0_{\Lambda_s\times (n-\Lambda_s)} & 0_{\Lambda_s\times \Lambda_s}
\end{array}\right),
\end{equation}
where $X_1$ is  a square matrix of size $(n-\Lambda_s)$.
The main goal of this section is to prove the following assertion.
\begin{proposition}\label{Gale-Ryser-application} Let $\mu=(\mu_1,\dots,\mu_k)$ be a partition of $n$ and let $\Lambda=(\Lambda_1,\dots,\Lambda_s)$ be a composition of $n$ with $\lambda=\mathrm{sort}(\Lambda)$. Then $\C_\mu\cap \u_\Lambda$ is non-empty if and only if $\lambda\unlhd \mu'$. 
\end{proposition}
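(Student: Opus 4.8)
The plan is to deduce the statement from the Division Algorithm (\cref{Div-alg}) by decomposing the composition $\Lambda$ one block at a time. Write $\Lambda = (\Lambda_1, \dots, \Lambda_s)$ and set $m = \Lambda_s$. A matrix $X \in \u_\Lambda(\FF)$ has its last $m$ rows identically zero and its first $n-m$ columns zero outside the top-left $(n-m)\times(n-m)$ block; moreover that top-left block is precisely an element of $\u_{\Lambda'}(\FF)$ where $\Lambda' = (\Lambda_1, \dots, \Lambda_{s-1})$ is a composition of $n - m$. So every $X \in \u_\Lambda(\FF_q)$ has exactly the block shape appearing in \cref{admisible-con}, with $J_\nu$ replaced by an arbitrary element $Y \in \u_{\Lambda'}(\FF_q)$. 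This sets up an induction on $s$: I would prove that $\C_\mu \cap \u_\Lambda$ is non-empty if and only if there exists a partition $\nu \vdash n-m$ with $\C_\nu \cap \u_{\Lambda'} \neq \emptyset$ (i.e. $Y$ can be chosen of Jordan type $\nu$) and $\mu \in \H_m(\nu)$, which by \cref{Div-alg} means $\mu'/\nu'$ is a horizontal $m$-strip.

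Granting that reduction, the inductive hypothesis says $\C_\nu \cap \u_{\Lambda'} \neq \emptyset \iff \lambda' \unlhd \nu'$, where $\lambda' = \mathrm{sort}(\Lambda')$; and $\mathrm{sort}(\Lambda)$ is obtained from $\lambda'$ by inserting a part equal to $m = \Lambda_s$ in the appropriate place. So $\C_\mu \cap \u_\Lambda \neq \emptyset$ iff there is a partition $\nu$ of $n-m$ with $\lambda' \unlhd \nu'$ and $\mu'/\nu'$ a horizontal $m$-strip. The combinatorial heart is then to show this is equivalent to $\lambda \unlhd \mu'$. In one direction: if such $\nu$ exists, then $\nu' \unlhd \mu'$ (adding a horizontal strip respects dominance in the conjugate, or directly: a horizontal strip adds at most one box per row of the conjugate diagram, and one checks the partial-sum inequalities), and combining $\lambda' \unlhd \nu' \unlhd \mu'$ with the single inserted part one gets $\lambda \unlhd \mu'$. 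In the other direction, given $\lambda \unlhd \mu'$, one must produce a suitable intermediate $\nu$; this is exactly a Gale--Ryser type statement (hence the name of the proposition), and I expect to invoke or mimic the classical argument that a horizontal-strip chain realizing $\lambda \unlhd \mu'$ can be built step by step — in fact the existence of the whole chain $\emptyset = \mu^0 \subseteq \cdots \subseteq \mu^s = \mu'$ with $\mu^j/\mu^{j-1}$ a horizontal $\lambda_j$-strip is precisely $\K_{\mu'\lambda} > 0$, which holds iff $\lambda \unlhd \mu'$ (this is \cref{chain-size} together with the standard fact that Kostka numbers are positive exactly on the dominance order).

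Actually, the cleanest route is to run the induction so that the full chain is built at once: $\C_\mu \cap \u_\Lambda \neq \emptyset$ iff there is a chain $\emptyset = \rho^0 \subseteq \rho^1 \subseteq \cdots \subseteq \rho^s = \mu'$ with each $\rho^j/\rho^{j-1}$ a horizontal $\Lambda_j$-strip — each step being one application of \cref{Div-alg} to peel off the corresponding diagonal block. Reordering the $\Lambda_j$ into $\lambda$ does not change whether such a chain exists (one can always reorder the strips in a column-strict filling, equivalently $\K_{\mu'\lambda}$ depends only on $\mathrm{sort}(\Lambda)$), so this is equivalent to $\K_{\mu'\lambda} > 0$, which by \cref{chain-size} and the classical characterization of the support of Kostka numbers holds iff $\lambda \unlhd \mu'$. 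The main obstacle I anticipate is the bookkeeping in the base case and in checking that peeling off block $j$ really corresponds to requiring $\rho^{j}/\rho^{j-1}$ to be a horizontal $\Lambda_j$-strip uniformly in $j$ — i.e. that the "admissible matrix" picture of \cref{rank-seq}–\cref{Div-alg} applies with $J_\nu$ replaced by a general nilpotent block of $\u_{\Lambda'}$, which it does because nothing in Section 2.1 used that the upper-left block was in Jordan form, only its rank powers, and those are governed by its Jordan type $\nu$.
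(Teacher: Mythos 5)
Your plan is correct, but it only partially coincides with the paper's argument. For the direction ``$\C_\mu\cap\u_\Lambda\neq\emptyset\Rightarrow\lambda\unlhd\mu'$'' the paper does exactly what you propose: peel off the last block, note that the top-left block lies in $\C_\nu\cap\u_{\Lambda^*}$, conjugate by $\mathrm{diag}(g,\mathrm{I}_{\Lambda_s})$ to put it in Jordan form, and invoke \cref{Div-alg}; but it then finishes with a direct dominance manipulation (if $\mu'/\nu'$ is a horizontal $\Lambda_s$-strip then $\nu'\cup(\Lambda_s)\unlhd\mu'$, and $\cup$ respects dominance), whereas you finish by translating the resulting chain of horizontal strips into $\K_{\mu'\Lambda}>0$ and quoting the classical facts $\K_{\mu'\Lambda}=\K_{\mu'\lambda}$ and $\K_{\mu'\lambda}>0\iff\lambda\unlhd\mu'$. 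The genuinely different part is the converse: the paper does \emph{not} run the recursion backwards, but instead constructs an explicit representative $\E_{\mathcal J}\in\C_\mu\cap\u_\Lambda$ (a $0$--$1$ matrix built from a partition of $[n]$ into sets $J_i$ of sizes $\mu_i$ meeting each block-interval $\mathcal F_j$ at most once), whose existence is Higgins' generalization of Hall's marriage theorem --- so no Kostka-number input is needed there. Your route instead uses the surjectivity half of the block-peeling (existence of admissible matrices, i.e.\ \cref{admissible-seq}, plus the conjugation trick of \cref{recursive-main_lemma}); note that your remark that ``nothing in Section 2.1 used that the upper-left block was in Jordan form'' should really be phrased via that conjugation, since the admissibility definition does literally require $J_\nu$. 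Both routes are sound: yours unifies the two directions into a single equivalence (nonemptiness $\iff$ existence of a horizontal-strip chain), dovetails with the recursion used later for $\F_{\mu\Lambda}$, and immediately shows that nonemptiness depends only on $\mathrm{sort}(\Lambda)$, at the price of importing standard symmetric-function facts that the paper only records in \cref{chain-size} without proof; the paper's proof is more self-contained (modulo Higgins' theorem) and yields an explicit matrix realizing the Jordan type.
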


\begin{proof}
Suppose that $X\in \C_\mu\cap\u_\Lambda$.
Write $X$ as in~\cref{X_presentation} and let $J_\nu$ be the Jordan form of $X_1$.   Therefore, $\nu\vdash n-\Lambda_s$ and $X_1\in \C_\nu\cap \u_{\Lambda^*}$ where $\Lambda^*=(\Lambda_1,\dots,\Lambda_{s-1})$. Now pick $g\in \GL_{n-\Lambda_s}(\FF)$ such that $gX_1g^{-1}=J_{\nu}$ and set 
$\tilde{g}:=\mathrm{diag}(g,\mathrm{I}_{\Lambda_s})$. Then 
$\tilde gX\tilde g^{-1}\in \C_\mu$
and  thus $\mu\in \HH_{\Lambda_s}(\nu)$. \cref{Div-alg} implies that $\mu'/\nu'$ is a horizontal $\Lambda_s$-strip. By an iterative application of this process, we obtain a sequence of partitions $\emptyset=\mu^0\subseteq\cdots \subseteq\mu^{s-2}\subseteq \mu^{s-1}\subseteq \mu^{s}=\mu$ such that $(\mu^j/\mu^{j-1})'$ is a horizontal $\Lambda_j$-strip. Equivalently, we obtain a semistandard Young tableau of shape $\mu'$ and content $\Lambda$. Thus $\K_{\mu'\Lambda}=\K_{\mu'\lambda}\neq 0$ which implies $\lambda\less\mu'$.

Conversely, assume $\lambda\less\mu'$. Then there exists a semistandard Young tableau of shape $\mu'$ and content $\Lambda$ 
(see~\cite[Exercise A.11, p. 457]{Fulton-Harris}). Therefore there exists a sequence of partitions $\emptyset=\mu^0\subseteq\cdots \subseteq\mu^{s-2}\subseteq \mu^{s-1}\subseteq \mu^{s}=\mu$ such that $(\mu^j/\mu^{j-1})'$ is a horizontal $\Lambda_j$-strip. Starting from a $\Lambda_1\times \Lambda_1$ zero matrix and successively using~\cref{Div-alg} for $(\mu^j/\mu^{j-1})'$, we can enlarge the matrix inductively to construct an element of $\C_\mu\cap\u_\Lambda$.
\end{proof}

Let $X\in \C_\mu\cap \mathfrak u_\Lambda$. We can write $X$ as
in~\cref{X_presentation}.
 As before, let $J_\nu$ be the Jordan form of $X_1$, where $\nu\vdash (n-\Lambda_s)$ and $X_1\in \C_{\nu}\cap \u_{\Lambda^*}$ (in particular,  $\lambda^*\less\nu'$).  If $g_{X_1}^{}\in \GL_{n-\Lambda_s}(\FF)$ is chosen such that $gXg^{-1}=J_\nu$, then
\begin{equation}
\tilde gX\tilde g^{-1}
=
\left(\begin{array}{c|c}
J_{\nu} & gY\\ \hline
0_{\Lambda_s\times (n-\Lambda_s)} & 0_{\Lambda_s\times\Lambda_s}
\end{array}\right)
\in \C_\mu,  \qquad\text{ for } \tilde{g}:=\mathrm{diag}(g_{X_1},\mathrm{I}_{\Lambda_s}).
\end{equation}

Therefore, $\mu\in \HH_{\Lambda_s}(\nu)$ and $g_{X_1}^{}Y$ is a $(\mu,\nu,\Lambda_s)$-admissible matrix. Thus, we
obtain a map
\begin{equation}\label{pi}
\pi: \C_\mu\cap \u_\Lambda \longrightarrow  \bigsqcup_{\substack{\nu\,\vdash\, n-\Lambda_s\\[0.06cm]\lambda^*\less\, \nu' \\[0.06cm] \mu\in \HH_{\Lambda_s}(\nu)}} (\C_\nu\cap \u_{\Lambda^*}),\quad\pi(X)=X_1.
\end{equation}
\begin{lemma}\label{recursive-main_lemma} The map $\pi$ defined in~\cref{pi} is surjective. Moreover for any $X_1\in \C_\nu \cap \u_{\Lambda^*}$, there is a bijective correspondence between  the fiber $\pi^{-1}(X_1)$ and the set of $(\mu,\nu,\Lambda_s)$-admissible matrices, given by $Y\mapsto g_{X_1}^{}Y$.  
\end{lemma}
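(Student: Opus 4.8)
The lemma is essentially a matter of unwinding the block decomposition~\cref{X_presentation}; the substantive combinatorial input, \cref{Div-alg}, has already been used to pin down the codomain of $\pi$, so what remains is bookkeeping. For surjectivity, I would fix a partition $\nu$ occurring in the index set of~\cref{pi} (so that $\lambda^*\less\nu'$ and $\mu\in\H_{\Lambda_s}(\nu)$) together with an arbitrary $X_1\in\C_\nu\cap\u_{\Lambda^*}$. Since $\mu\in\H_{\Lambda_s}(\nu)$, there is by definition a $(\mu,\nu,\Lambda_s)$-admissible matrix $\Z$, i.e.\ $\left(\begin{smallmatrix}J_\nu&\Z\\0&0\end{smallmatrix}\right)\in\C_\mu$. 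Set $Y:=g_{X_1}^{-1}\Z$ and $X:=\left(\begin{smallmatrix}X_1&Y\\0&0\end{smallmatrix}\right)$. Conjugating by $\tilde g:=\mathrm{diag}(g_{X_1},\mathrm{I}_{\Lambda_s})$ and using $g_{X_1}X_1g_{X_1}^{-1}=J_\nu$ gives $\tilde gX\tilde g^{-1}=\left(\begin{smallmatrix}J_\nu&\Z\\0&0\end{smallmatrix}\right)\in\C_\mu$, hence $X\in\C_\mu$; and $\pi(X)=X_1$ by construction.

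It then remains to check that $X\in\u_\Lambda$. The point to isolate here is that membership in $\u_\Lambda$ constrains only the blocks of a matrix lying on or below the block diagonal cut out by $\Lambda=(\Lambda_1,\dots,\Lambda_s)$ (those must vanish), while every block strictly above the block diagonal is unconstrained. Since $X_1\in\u_{\Lambda^*}$ already has the required vanishing among its first $s-1$ block rows and columns, the last block row of $X$ is zero, and $Y$ sits strictly above the block diagonal, we conclude that $X\in\u_\Lambda$ \emph{for every choice of the $(n-\Lambda_s)\times\Lambda_s$ matrix $Y$}. This completes the proof that $\pi$ is surjective.

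For the fiber statement, fix $X_1\in\C_\nu\cap\u_{\Lambda^*}$. Unwinding the definition of $\pi$, the fiber $\pi^{-1}(X_1)$ consists exactly of the matrices $X=\left(\begin{smallmatrix}X_1&Y\\0&0\end{smallmatrix}\right)$ lying in $\C_\mu\cap\u_\Lambda$; by the block-structure observation above the condition $X\in\u_\Lambda$ is automatic, so $\pi^{-1}(X_1)$ is in natural bijection with the set of $(n-\Lambda_s)\times\Lambda_s$ matrices $Y$ satisfying $\left(\begin{smallmatrix}X_1&Y\\0&0\end{smallmatrix}\right)\in\C_\mu$. Conjugating by $\tilde g$ once more, this condition is equivalent to $\left(\begin{smallmatrix}J_\nu&g_{X_1}Y\\0&0\end{smallmatrix}\right)\in\C_\mu$, i.e.\ to $g_{X_1}Y$ being a $(\mu,\nu,\Lambda_s)$-admissible matrix. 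Since left multiplication by the invertible matrix $g_{X_1}$ is a bijection on $(n-\Lambda_s)\times\Lambda_s$ matrices, $Y\mapsto g_{X_1}Y$ restricts to the asserted bijection between $\pi^{-1}(X_1)$ and the set of $(\mu,\nu,\Lambda_s)$-admissible matrices.

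No step here is difficult once the definitions are unwound; the one claim worth stating and verifying explicitly — because it is precisely what makes the fibers full matrix spaces and what drives the counting in the following section — is that belonging to $\u_\Lambda$ imposes no condition whatsoever on the off-diagonal block $Y$. (It is also true, though not needed here, that by \cref{Gale-Ryser-application} every $\nu$ appearing in~\cref{pi} yields a nonempty $\C_\nu\cap\u_{\Lambda^*}$, so surjectivity is a genuine assertion about the full disjoint union.)
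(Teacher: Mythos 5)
Your proposal is correct and follows essentially the same route as the paper: surjectivity via choosing an admissible $\Z$, setting $Y=g_{X_1}^{-1}\Z$, and conjugating by $\tilde g=\mathrm{diag}(g_{X_1},\mathrm{I}_{\Lambda_s})$; and the fiber description via the observation that $X\in\C_\mu$ with top-left block $X_1$ is equivalent to $g_{X_1}Y$ being $(\mu,\nu,\Lambda_s)$-admissible, with invertibility of $g_{X_1}$ giving the bijection. Your explicit remark that membership in $\u_\Lambda$ imposes no constraint on the off-diagonal block $Y$ is a detail the paper leaves implicit, but it is not a different argument.
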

\begin{proof} We first show that $\pi$ is surjective. Let $X_1\in \C_\nu\cap\u_{\Lambda^*}$. Since $\mu\in \HH_{\Lambda_s}(\nu)$, there exists also a $(\mu,\nu,\Lambda_s)$-admissible matrix $\Z$ satisfying
\begin{equation}\label{Z-admis}
\left(\begin{array}{c|c}
J_{\nu} & \Z\\ \hline
0_{\Lambda_s\times (n-\Lambda_s)} & 0_{\Lambda_s\times \Lambda_s}
\end{array}\right)\in \C_\mu. 
\end{equation}
Now define 
$$
X:=\left(\begin{array}{c|c}
X_1 & g_{X_1}^{-1}\,\Z\\ \hline
0_{\Lambda_s\times (n-\Lambda_s)} & 0_{\Lambda_s\times \Lambda_s}
\end{array}\right)\in \u_\Lambda.
$$
From~\cref{Z-admis} we have
\begin{equation}
\tilde gX\tilde g^{-1}
=\left(\begin{array}{c|c}
J_\nu & \Z\\ \hline
0_{\Lambda_s\times (n-\Lambda_s)} & 0_{\Lambda_s\times \Lambda_s}
\end{array}\right)\in \C_\mu,  \qquad \tilde{g}:=\mathrm{diag}(g_{X_1}^{},\mathrm{I}_{\Lambda_s}).
\end{equation}
This shows $X\in \C_\mu\cap \u_\Lambda$ and $\pi(X)=X_1$. From this argument we  deduce that the map
\begin{equation*}
\begin{split}
\{(\mu,\nu,\Lambda_s)\text{-admissible matrices}\} & \rightarrow \pi^{-1}(X_1)\\
\Z & \longmapsto \left(\begin{array}{c|c}
X_1 & g_{X_1}^{-1}\,\Z\\ \hline
0_{\Lambda_s\times (n-\Lambda_s)} & 0_{\Lambda_s\times \Lambda_s}
\end{array}\right)
\end{split}
\end{equation*}
is a bijection, which proves the second part of the lemma. 
\end{proof}

\section{A recursion for $\F_{\mu\Lambda}(q)$}
\label{sec:rec}
From now on we assume that $\FF=\FF_q$. 
The goal of this section is to provide a recursive formula for $\F_{\mu\Lambda}(q)$. Because of~\cref{Gale-Ryser-application} we will assume $\lambda\less \mu'$, where $\lambda=\mathrm{sort}(\Lambda)$. The following theorem is an immediate consequence of 
\cref{recursive-main_lemma}. 
\begin{theorem}\label{recursive_theortem} Let $\Lambda = (\Lambda_1, \dots, \Lambda_s)$ be a composition of $n$ and set $\lambda = \mathrm{sort}(\Lambda)$. Additionally, let $\Lambda^* = (\Lambda_1, \dots, \Lambda_{s-1})$ and set $\lambda^* = \mathrm{sort}(\Lambda^*)$. For any partition $\mu$ of $n$ with $\lambda \less \mu'$, we have
\begin{equation}\label{F_recursive}
\F_{\mu\Lambda}(q)=\sum_{\substack{\nu\, \vdash\, n-\Lambda_s\\[0.06cm] \lambda^*\less\, \nu' \\[0.06cm] \mu\in \HH_{\Lambda_s}(\nu)}} \G^*(\mu,\nu,\Lambda_s)\, \F_{\nu\Lambda^*}(q),
\end{equation}  
where $\G^*(\mu,\nu,\Lambda_s)$ is the number of $(\mu,\nu,\Lambda_s)$ admissible matrices. 
\end{theorem}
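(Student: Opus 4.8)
The statement is essentially a bookkeeping consequence of \cref{recursive-main_lemma}, so my plan is to assemble the pieces already in place. First I would recall that by definition $\F_{\mu\Lambda}(q) = \#(\C_\mu \cap \u_\Lambda(\FF_q))$, and that the map $\pi$ of \cref{pi} sends $\C_\mu \cap \u_\Lambda$ to the disjoint union $\bigsqcup_\nu (\C_\nu \cap \u_{\Lambda^*})$, where the union ranges over partitions $\nu \vdash n - \Lambda_s$ satisfying $\lambda^* \less \nu'$ and $\mu \in \H_{\Lambda_s}(\nu)$. Since $\pi$ is surjective (the first part of \cref{recursive-main_lemma}), I would partition the domain $\C_\mu \cap \u_\Lambda$ into fibers according to which summand $\C_\nu \cap \u_{\Lambda^*}$ contains $\pi(X)$, and then within each summand, further into individual fibers $\pi^{-1}(X_1)$ over points $X_1 \in \C_\nu \cap \u_{\Lambda^*}$. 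This gives
\[
\F_{\mu\Lambda}(q) = \sum_{\substack{\nu\, \vdash\, n-\Lambda_s\\ \lambda^*\less\, \nu' \\ \mu\in \H_{\Lambda_s}(\nu)}} \ \sum_{X_1 \in \C_\nu \cap \u_{\Lambda^*}} \#\pi^{-1}(X_1).
\]

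Next I would invoke the second part of \cref{recursive-main_lemma}: for each $X_1 \in \C_\nu \cap \u_{\Lambda^*}$, the fiber $\pi^{-1}(X_1)$ is in bijection with the set of $(\mu,\nu,\Lambda_s)$-admissible matrices (via $Y \mapsto g_{X_1} Y$, where $g_{X_1} \in \GL_{n-\Lambda_s}(\FF_q)$ conjugates $X_1$ to $J_\nu$). The crucial observation is that the cardinality of the set of $(\mu,\nu,\Lambda_s)$-admissible matrices depends only on $\mu$, $\nu$, and $\Lambda_s$ — not on $X_1$ — so it equals $\G^*(\mu,\nu,\Lambda_s)$ uniformly across the inner sum. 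Hence the inner sum collapses to $\G^*(\mu,\nu,\Lambda_s) \cdot \#(\C_\nu \cap \u_{\Lambda^*}) = \G^*(\mu,\nu,\Lambda_s)\, \F_{\nu\Lambda^*}(q)$, which yields \cref{F_recursive} directly.

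There is one bookkeeping point worth checking, which I regard as the only mild obstacle: the index set of the disjoint union in \cref{pi}. One should verify that the constraint $\lambda^* \less \nu'$ in the sum is consistent — namely that any $\nu$ appearing as the Jordan type of the top-left block $X_1$ of some $X \in \C_\mu \cap \u_\Lambda$ automatically satisfies $\lambda^* \less \nu'$ (this is exactly the content of the inductive argument in Step II of the proof of \cref{Gale-Ryser-application}, applied to $X_1 \in \C_\nu \cap \u_{\Lambda^*}$), and conversely that whenever $\lambda^* \less \nu'$ and $\mu \in \H_{\Lambda_s}(\nu)$ the corresponding summand $\C_\nu \cap \u_{\Lambda^*}$ is nonempty (again by \cref{Gale-Ryser-application}), so that no spurious or missing terms appear. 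Once the index set is pinned down, the argument is a one-line rearrangement; no new ideas are needed beyond \cref{recursive-main_lemma}. Finally, I would note that the hypothesis $\lambda \less \mu'$ is what guarantees (via \cref{Gale-Ryser-application}) that $\C_\mu \cap \u_\Lambda$ itself is nonempty, so the recursion is not vacuous, and that for each valid $\nu$ in the sum one has $\lambda^* \less \nu'$ so $\F_{\nu\Lambda^*}(q)$ is itself covered by the same framework.
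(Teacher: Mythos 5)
Your proposal is correct and follows exactly the route the paper intends: the paper derives \cref{recursive_theortem} as an immediate consequence of \cref{recursive-main_lemma}, namely by counting $\C_\mu\cap\u_\Lambda$ fiberwise under the surjection $\pi$ of \cref{pi} and using that each fiber $\pi^{-1}(X_1)$ is in bijection with the set of $(\mu,\nu,\Lambda_s)$-admissible matrices, whose cardinality $\G^*(\mu,\nu,\Lambda_s)$ is independent of $X_1$. Your extra check that the index set in \cref{pi} is the right one (via \cref{Gale-Ryser-application}) is sound and matches the discussion preceding \cref{pi} in the paper.
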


We can also introduce bottom-up notions of eligible partitions and admissible matrices as in~\cref{recursive-main_lemma}, but working with  
matrices of the form
$$
\left(\begin{array}{c|c}
0_{\Lambda_1\times\Lambda_1} & \Z\\ \hline
0_{(n-\Lambda_1)\times\Lambda_1} & J_\nu
\end{array}\right).
$$
In this fashion, we obtain the following variant of~\cref{recursive_theortem}: suppose that 
$\Lambda_*=(\Lambda_2,\dots,\Lambda_s)$ and $\lambda_*=\mathrm{sort}(\Lambda_*)$.
Then
\begin{equation}\label{F_recursive_modify}
\F_{\mu\Lambda}(q)=\sum_{\substack{\nu\, \vdash\, n-\Lambda_1\\[0.06cm] \lambda_*\less\, \nu' \\[0.06cm] \mu\in \HH_{\Lambda_1}(\nu)}} \G_*(\mu,\nu,\Lambda_1)\, \F_{\nu\Lambda_*}(q),
\end{equation}
where 
$\G_*(\mu,\nu,\Lambda_1)$ is the number of bottom-up $(\mu,\nu,\Lambda_1)$-admissible matrices. 
It is straightforward to verify that 
$\G_*(\mu,\nu,|\mu|-|\nu|)=\G^*(\mu,\nu,|\mu|-|\nu|)$.

\subsection{Symmetry of $\F_{\mu\Lambda}(q)$}\label{Symmetry} As before we assume $\Lambda=(\Lambda_1,\dots,\Lambda_s)$ is a composition of $n\geq 2$ with $\lambda=\mathrm{sort}(\Lambda)$. The goal of this section is to prove the result below. 
\begin{proposition}\label{symmetry_prop} Let $\mu$ be a partition of $n$. Then $\F_{\mu\Lambda}(q)=\F_{\mu\lambda}(q)$.
\end{proposition}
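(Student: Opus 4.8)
The plan is to establish $\F_{\mu\Lambda}(q)=\F_{\mu\lambda}(q)$ by induction on the number of parts $s=\ell(\Lambda)$, using the recursion of~\cref{recursive_theortem} together with the symmetry noted at the end of the previous section, namely that $\G_*(\mu,\nu,|\mu|-|\nu|)=\G^*(\mu,\nu,|\mu|-|\nu|)$. The base case $s=1$ is immediate from~\cref{Lambda-length}. For the inductive step, the key observation is that both the top-down recursion~\cref{F_recursive} (which peels off the last part $\Lambda_s$) and the bottom-up recursion~\cref{F_recursive_modify} (which peels off the first part $\Lambda_1$) compute $\F_{\mu\Lambda}(q)$, but in terms of $\F_{\nu\Lambda^*}$ and $\F_{\nu\Lambda_*}$ respectively, where $\Lambda^*$ and $\Lambda_*$ are compositions with one fewer part. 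Since $\mathrm{sort}(\Lambda^*)$ and $\mathrm{sort}(\Lambda_*)$ need not be equal as compositions, the induction hypothesis is what reconciles them: both equal $\F_{\nu\,\mathrm{sort}(\Lambda^*)}$ and $\F_{\nu\,\mathrm{sort}(\Lambda_*)}$, and these depend only on the underlying partition.

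More precisely, first I would reduce to showing that the value of $\F_{\mu\Lambda}(q)$ is unchanged under an arbitrary transposition of two adjacent parts $\Lambda_i,\Lambda_{i+1}$, since adjacent transpositions generate the full symmetric group acting on the parts of $\Lambda$, and any composition can be sorted by such moves. To handle an adjacent transposition in positions $i,i+1$ of a general composition, I would first use the top-down recursion repeatedly to strip off all parts after position $i+1$ (reducing to the composition $(\Lambda_1,\dots,\Lambda_{i+1})$), then use the bottom-up recursion repeatedly to strip off all parts before position $i$; by the induction hypothesis these reductions only see the sorted partition and so are insensitive to the internal order of $\Lambda_1,\dots,\Lambda_{i-1}$ and of $\Lambda_{i+2},\dots,\Lambda_s$. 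This reduces the claim to the case of a two-part composition, i.e.\ to showing $\F_{\mu,(a,b)}(q)=\F_{\mu,(b,a)}(q)$.

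For the two-part case, the top-down recursion peeling $b$ and the bottom-up recursion peeling $a$ respectively give $\F_{\mu,(a,b)}(q)=\sum_\nu \G^*(\mu,\nu,b)\F_{\nu,(a)}(q)$ and $\F_{\mu,(a,b)}(q)=\sum_\nu \G_*(\mu,\nu,a)\F_{\nu,(b)}(q)$, where in both cases $\nu$ runs over partitions such that $\C_\nu\cap\u_{(a)}$ (resp.\ $\C_\nu\cap\u_{(b)}$) is nonempty, which by~\cref{Lambda-length} forces $\nu=(1^a)$ (resp.\ $\nu=(1^b)$) and $\F_{\nu,(a)}=\F_{\nu,(b)}=1$. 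Hence $\F_{\mu,(a,b)}(q)=\G^*(\mu,(1^b),b)=\G_*(\mu,(1^a),a)$, and I would invoke the identity $\G_*(\mu,\nu,|\mu|-|\nu|)=\G^*(\mu,\nu,|\mu|-|\nu|)$ together with the conjugation symmetry interchanging the roles of the two blocks (transposing the ambient matrix carries $\u_{(a,b)}$ to $\u_{(b,a)}$ and preserves Jordan type) to conclude that this count is symmetric in $a$ and $b$.

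The main obstacle I anticipate is the bookkeeping in the general reduction to adjacent transpositions: one must verify carefully that iterating the top-down recursion to strip trailing parts and then the bottom-up recursion to strip leading parts is legitimate (the intermediate eligibility conditions $\lambda^*\less\nu'$ are satisfied at each stage, and the orders of summation can be interchanged), and that the induction hypothesis genuinely applies at each intermediate composition because it has strictly fewer parts. Once the problem is localized to a two-part composition, the symmetry should be transparent from the transpose-invariance of Jordan type; the real work is confirming that the recursions decouple cleanly enough to localize it there.
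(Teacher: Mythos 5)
Your proposal is correct and takes essentially the same route as the paper: induction on $\ell(\Lambda)$, with the two-part case settled by transpose-invariance of the rank of the rectangular block, and the inductive step carried by the top-down and bottom-up recursions of~\cref{recursive_theortem} and~\cref{F_recursive_modify} combined with the induction hypothesis. The paper phrases the inductive step by sorting the first (resp.\ last) $s-1$ parts via $\Lambda^{\#}$ and $\Lambda_{\#}$ and noting $\lambda=((\Lambda_{\#})^{\#})_{\#}$ rather than reducing to adjacent transpositions, but this is only bookkeeping; likewise your appeal to $\G_*=\G^*$ in the two-part case (where the indices should read $\G^*(\mu,(1^a),b)=\G_*(\mu,(1^b),a)$) is superfluous once the transpose bijection between $a\times b$ and $b\times a$ matrices of a given rank is in hand.
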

\begin{proof}
    We prove this by induction on $s=\ell(\Lambda)$. When $s = 1$, the statement is trivial by~\cref{Lambda-length}. When $s=2$, let $\Lambda = (\Lambda_1, \Lambda_2)$ be a composition of $n$ with $\Lambda_1 \leq \Lambda_2$ so that $\lambda = (\Lambda_2, \Lambda_1)$. Let $X \in \u_\Lambda$ be a matrix of rank $k$. Since $X^2 = 0$, the Jordan type of such a matrix is $\mu = (2^k, 1^{n-2k})$ with $\mu' = (n-k, k)$. For this partition, $\F_{\mu\Lambda}(q)$ is the number of $\Lambda_1 \times \Lambda_2$ matrices of rank $k$, which is clearly the same as the number of $\Lambda_2 \times \Lambda_1$ matrices of rank $k$.
Therefore 
\begin{equation}\label{eq_Sym_s=2}
    \F_{\mu\Lambda}(q)=\F_{\mu\lambda}(q),\qquad \Lambda=(\Lambda_1,\Lambda_2). 
\end{equation}
Now assume $s\geq 3$. Define $\Lambda^\#=(\lambda^*,\Lambda_s)$ and $\Lambda_\#=(\Lambda_1,\lambda_*)$ where $\lambda^*$ and $\lambda_*$ are defined in~\cref{F_recursive,F_recursive_modify}.  From~\cref{F_recursive,F_recursive_modify} and the induction hypothesis we deduce 
\begin{equation}
    \F_{\mu\Lambda}(q)=\F_{\mu\Lambda^\#}(q)=\F_{\mu\Lambda_\#}(q).
\end{equation}
But we have $\lambda=((\Lambda_\#)^\#)_\#$, hence $\F_{\mu\Lambda}(q)=\F_{\mu\lambda}(q)$. 
\end{proof}

\subsection{Counting admissible matrices}
For $0\leq r\leq l$, set 
\begin{equation}\label{Cnr}
    \mathrm{C}_l^r\defin \left(q^l-1\right)\left(q^l-q\right)\cdots\left(q^l-q^{r-1}\right)=q^{\binom{r}{2}}(q-1)^r\,[r]_q!\, {l\brack r}_q. 
\end{equation}

\begin{proposition}\label{addmissible_number} Let $1 \leq m \leq n$, and let $\nu$ be a partition of $n - m$ with $\ell = \ell(\nu')$. Suppose $\mu \in \HH_m(\nu)$. The total number of $(\mu, \nu, m)$-admissible matrices is given by
\begin{equation}
\G^*(\mu,\nu,m)=q^{m(n-m-\nu_1')}\prod_{j=1}^{\ell}q^{\omega_{j+1}(\nu'_j-\nu'_{j+1})}{m-\omega_{j+1}\brack \omega_{j}-\omega_{j+1}}_q\,\mathrm{C}^{\omega_j-\omega_{j+1}}_{\nu'_j-\nu'_{j+1}}\, ,
\end{equation}
where $\omega_0=m$, $\omega_{\ell+1}=0$ and $\mu'_{j}=\nu_j'+(\omega_{j-1}-\omega_j)$ for $1\leq j\leq \ell+1$. 
\end{proposition}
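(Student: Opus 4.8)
The plan is to count the $(\mu,\nu,m)$-admissible matrices $\Z$ directly by constructing them column-of-$\nu$ by column-of-$\nu$, exactly as in the proof of~\cref{admissible-seq}, but now keeping track of the number of choices at each stage rather than merely existence. Recall that by~\cref{eqq:admi} a matrix $\Z$ is $(\mu,\nu,m)$-admissible precisely when $\rk(\Z_j)=\omega_j$ for all $1\le j\le\ell$, where by~\cref{Z_j-description} the rows of $\Z_j$ are the essential vectors $z_{\varepsilon(1)},\dots,z_{\varepsilon(\nu_j')}$, and the $\omega_j$ are determined by $\mu,\nu$ via $\mu'_j=\nu'_j+(\omega_{j-1}-\omega_j)$. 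Since $\mu\in\H_m(\nu)$, these $\omega_j$ satisfy conditions (1)--(5) of~\cref{lem:iffcondition}, so the count is well-defined.

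First I would separate the free vectors from the essential vectors. By~\cref{dfn-free-adm} there are $(n-m)-k=(n-m)-\nu_1'$ free vectors (here $k=\ell(\nu)=\nu_1'$), each ranging freely over $\FF_q^m$; this contributes the prefactor $q^{m(n-m-\nu_1')}$. It then remains to count the ways to choose the $k=\nu_1'$ essential vectors $z_{\varepsilon(1)},\dots,z_{\varepsilon(k)}$ in $\FF_q^m$ subject to $\rk(\Z_j)=\omega_j$ for $1\le j\le\ell$. I would do this by a downward induction on $j$, building the vectors that appear in $\Z_\ell$ first, then passing from $\Z_{j+1}$ to $\Z_j$. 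By~\cref{new-entires}, passing from $\Z_{j+1}$ to $\Z_j$ introduces exactly $\nu'_j-\nu'_{j+1}$ new essential vectors, and we need the rank to jump from $\omega_{j+1}$ to $\omega_j$, i.e. we must add $\omega_j-\omega_{j+1}$ new dimensions on top of an already-fixed $\omega_{j+1}$-dimensional span sitting inside $\FF_q^m$. The base case $j=\ell$ (with $\Z_{\ell+1}$ empty, $\omega_{\ell+1}=0$) is the same computation with $\omega_{\ell+1}=0$.

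The heart of the matter is the following counting lemma: given a fixed subspace $W\subseteq\FF_q^m$ of dimension $\omega_{j+1}$, the number of ways to choose $\nu'_j-\nu'_{j+1}$ vectors in $\FF_q^m$ whose span, together with $W$, has dimension exactly $\omega_j$ equals
\[
q^{\omega_{j+1}(\nu'_j-\nu'_{j+1})}\,{m-\omega_{j+1}\brack \omega_j-\omega_{j+1}}_q\,\mathrm{C}^{\omega_j-\omega_{j+1}}_{\nu'_j-\nu'_{j+1}}.
\]
To see this, project $\FF_q^m\to\FF_q^m/W\cong\FF_q^{m-\omega_{j+1}}$; the images of the new vectors must span a subspace of dimension $d:=\omega_j-\omega_{j+1}$ in $\FF_q^{m-\omega_{j+1}}$. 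The number of ways to choose which $d$-dimensional subspace $V$ of the quotient is the Gaussian binomial ${m-\omega_{j+1}\brack d}_q$; the number of ordered $(\nu'_j-\nu'_{j+1})$-tuples of vectors in $V$ that span $V$ is, by a standard inclusion--exclusion / flag argument, exactly $\mathrm{C}^{d}_{\nu'_j-\nu'_{j+1}}=\prod_{i=0}^{d-1}(q^{\nu'_j-\nu'_{j+1}}-q^i)$ --- wait, rather $\prod_{i=0}^{d-1}(q^{?}\cdots)$; more precisely $\mathrm{C}^{r}_{l}$ as defined in~\cref{Cnr} counts the ordered $l$-tuples of vectors in $\FF_q^r$ spanning an $r$-dimensional... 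I would state it cleanly as: the number of ordered $l$-tuples in $\FF_q^{r}$ that span $\FF_q^{r}$ (here $l=\nu'_j-\nu'_{j+1}$, $r=d$) is $\mathrm{C}^{d}_{l}$ once one is careful about which symbol sits where, matching the convention in~\cref{Cnr}; and finally, each of the $\nu'_j-\nu'_{j+1}$ new vectors has a free lift from the quotient back to $\FF_q^m$, each lift being a coset of $W$ of size $q^{\omega_{j+1}}$, giving the factor $q^{\omega_{j+1}(\nu'_j-\nu'_{j+1})}$. Multiplying this lemma over $j=1,\dots,\ell$ (the spans nest because $\Z_{j+1}$'s rows form a subset of $\Z_j$'s rows, so at stage $j$ the already-fixed span is genuinely the $\omega_{j+1}$-dimensional span from stage $j+1$), together with the free-vector prefactor, yields the claimed product formula.

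The main obstacle I anticipate is purely bookkeeping: getting the roles of the two indices in $\mathrm{C}^r_l$ straight and verifying that the ``already-fixed span'' when we build $\Z_j$ is exactly the $\omega_{j+1}$-dimensional span carried over from $\Z_{j+1}$ --- this uses that each essential vector, once chosen, never changes, and that $\Z_{j+1}$'s essential vectors are literally a sub-multiset of $\Z_j$'s (\cref{Z_j-description}). One subtle point to handle is that the new essential vectors at stage $j$ might individually land inside the old span; the counting lemma above already accounts for this since it counts all tuples with the prescribed total span, not tuples of vectors each outside $W$. I would also double check the edge behavior when $\omega_j=\omega_{j+1}$ (so $d=0$): then ${m-\omega_{j+1}\brack 0}_q=1$ and $\mathrm{C}^0_{\nu'_j-\nu'_{j+1}}=1$ by the empty-product convention, and the factor correctly reduces to $q^{\omega_{j+1}(\nu'_j-\nu'_{j+1})}$, i.e. the new vectors lie freely in $W$, as it should be.
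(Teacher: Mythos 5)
Your proposal is correct and follows essentially the same route as the paper: factor off the $q^{m(n-m-\nu_1')}$ contribution of the free vectors, then count the essential vectors stage by stage, where your key counting lemma (choose the $d$-dimensional image in $\FF_q^m/W$, count spanning tuples via $\mathrm{C}^{d}_{\nu'_j-\nu'_{j+1}}$, and multiply by $q^{\omega_{j+1}}$ per lift) is exactly the inductive step of the paper's \cref{rank_profile}, applied with $n_i=\nu_i'$ and $r_i=\omega_i$. Your final reading of the convention in \cref{Cnr} (superscript = dimension, subscript = number of vectors) is the right one, so no gap remains.
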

\begin{proof}
By \cref{lem:iffcondition} an $(n-m)\times n$ matrix $\Z$ is $(\mu,\nu,m)$-admissible if and only if \cref{eqq:admi} holds. 
There are $n-m-\nu_1'$ free vectors
in $\Z$ (see \cref{dfn-free-adm}), and there are $q^{m(n-m-\nu'_1)}$ ways to fill their entries. 
Thus, counting the number of  admissible matrices boils down to counting the number of (essential) vectors 
$z_{\varepsilon(1)},z_{\varepsilon(2)},\dots,z_{\varepsilon(\nu_1')}\in \FF_q^m$
such that for the corresponding $\Z$ we have  $\rk(\Z_j)=\omega_j$ for $1\leq j\leq \nu'_1$. 
The statement follows from setting $n_i=\nu_i'$ and $r_i=\omega_i$ 
in~\cref{rank_profile}. 
\end{proof}



\section{Proof of~\cref{mainthm:Fmula-comb}: first and second parts}\label{F-Combin:Section} Let $\lambda$ and $\mu$ be partitions of $n$ with $\lambda\less \mu'$. As before we set $s=\ell(\lambda)$ and $\lambda^*=(\lambda_1,\dots,\lambda_{s-1})$.  By~\cref{Gale-Ryser-application} the intersection $\C_\lambda\cap \u_\lambda$ is non-empty and from  
\cref{recursive_theortem} we obtain 
\begin{equation}\label{rec:F}
\F_{\mu\lambda}(q)=\sum_{\substack{\nu\, \vdash\, n-\lambda_s\\[0.06cm] \lambda^*\less\, \nu' \\[0.06cm] \mu\in \HH_{\lambda_s}(\nu)}} \G^*(\mu,\nu,\lambda_s)\, \F_{\nu\lambda^*}(q).
\end{equation}
Setting $m=\lambda_s$ and $n=|\mu|$ in~\cref{addmissible_number} we obtain
\begin{equation}\label{GC-eq}
\G^*(\mu,\nu,\lambda_s)=q^{\lambda_s(|\mu|-\lambda_s-\nu_1')}\prod_{j=1}^{\ell}q^{\omega_{j+1}(\nu'_j-\nu'_{j+1})}{\lambda_s-\omega_{j+1}\brack \omega_{j}-\omega_{j+1}}_q\,\mathrm{C}^{\omega_j-\omega_{j+1}}_{\nu'_j-\nu'_{j+1}},
\end{equation}
where $\ell = \ell(\nu')$ and $\mu'_j = \nu'_j + (\omega_{j-1} - \omega_j)$, for $1 \leq j \leq \ell+1$, with $\omega_0 = \lambda_s$ and $\nu_{\ell+1}'=\omega_{\ell+1} = 0$.
We now simplify $\G^*(\mu,\nu,\lambda_s)$ by relating it to $\theta_{\mu'/\nu'}(q)$, defined in~\cref{theta:eq}. 
\begin{lemma}\label{G:Simplification} Let $\nu$ be a partition of $n-\lambda_s$ with $\ell=\ell(\nu')$. Then for any $\mu\in\HH_{\lambda_s}(\nu)$ we have
$$
\G^*(\mu,\nu,\lambda_s)=q^{\alpha(\mu,\nu,\lambda_s)}\, (q-1)^{\beta(\mu,\nu,\lambda_s)}
\theta_{\mu'/\nu'}(q),
$$

where 
\begin{equation}\label{alpha-beta}
\begin{split}
\alpha(\mu,\nu,\lambda_s)&=\lambda_s(|\mu|-\lambda_s-\nu_1')+\sum_{j=1}^\ell \binom{\mu'_{j+1}-\nu'_{j+1}}{2}+\sum_{j=1}^\ell \omega_{j+1}(\nu'_j-\nu'_{j+1}),
\\
\beta(\mu,\nu,\lambda_s)&=\lambda_s-(\mu'_1-\nu_1').
\end{split}
\end{equation}
\end{lemma}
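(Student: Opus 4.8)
The plan is to massage the explicit product formula for $\G^*(\mu,\nu,\lambda_s)$ in~\cref{GC-eq} into the shape $q^{\alpha}(q-1)^\beta\,\theta_{\mu'/\nu'}(q)$ by working out what each factor contributes to the power of $q$, the power of $q-1$, and the $q$-combinatorial part. First I would recall from the definition~\cref{Cnr} that $\mathrm{C}^{r}_{l}=q^{\binom r2}(q-1)^r[r]_q!\,{l\brack r}_q$, so in~\cref{GC-eq} the factor $\mathrm{C}^{\omega_j-\omega_{j+1}}_{\nu'_j-\nu'_{j+1}}$ contributes $q^{\binom{\omega_j-\omega_{j+1}}{2}}(q-1)^{\omega_j-\omega_{j+1}}[\omega_j-\omega_{j+1}]_q!\,{\nu'_j-\nu'_{j+1}\brack \omega_j-\omega_{j+1}}_q$. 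Since $\mu'_{j}-\nu'_j=\omega_{j-1}-\omega_j$ by hypothesis, the relation $\mu'_{j+1}-\nu'_{j+1}=\omega_j-\omega_{j+1}$ lets me rewrite $\binom{\omega_j-\omega_{j+1}}{2}=\binom{\mu'_{j+1}-\nu'_{j+1}}{2}$, which is exactly the middle summand appearing in $\alpha$; and it lets me rewrite the power of $q-1$ as $\sum_{j=1}^\ell(\omega_j-\omega_{j+1})=\omega_1-\omega_{\ell+1}=\omega_1=\lambda_s-(\mu'_1-\nu'_1)$, which is precisely $\beta(\mu,\nu,\lambda_s)$ since $\mu'_1-\nu'_1=\omega_0-\omega_1=\lambda_s-\omega_1$. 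So the exponents of $q-1$ already match without touching $\theta$; the only subtlety is that $\theta_{\mu'/\nu'}(q)$ as defined in~\cref{theta:eq} is built purely from $q$-integers and $q$-binomials and carries no $(q-1)$ factor, which is consistent.

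Next I would reconcile the $q$-combinatorial parts. Writing $\eta=\mu'$, $\rho=\nu'$, the definition gives
\[
\theta_{\mu'/\nu'}(q)=\frac{[\,|\mu'|-|\nu'|\,]_q!}{[\mu'_1-\nu'_1]_q!}\prod_{i\geq 1}{\nu'_i-\nu'_{i+1}\brack \mu'_{i+1}-\nu'_{i+1}}_q=\frac{[\lambda_s]_q!}{[\mu'_1-\nu'_1]_q!}\prod_{i\geq 1}{\nu'_i-\nu'_{i+1}\brack \omega_i-\omega_{i+1}}_q,
\]
using $|\mu'|-|\nu'|=m=\lambda_s$ and $\mu'_{i+1}-\nu'_{i+1}=\omega_i-\omega_{i+1}$. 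On the other hand the $q$-combinatorial debris left over from~\cref{GC-eq} after extracting powers of $q$ and $q-1$ is $\prod_{j=1}^\ell {\lambda_s-\omega_{j+1}\brack \omega_j-\omega_{j+1}}_q[\omega_j-\omega_{j+1}]_q!\,{\nu'_j-\nu'_{j+1}\brack \omega_j-\omega_{j+1}}_q$, and I must show the product of the first two types of factors telescopes to $[\lambda_s]_q!/[\mu'_1-\nu'_1]_q!=[\lambda_s]_q!/[\omega_1]_q!$. Here I would use ${\lambda_s-\omega_{j+1}\brack \omega_j-\omega_{j+1}}_q[\omega_j-\omega_{j+1}]_q!=\frac{[\lambda_s-\omega_{j+1}]_q!}{[\lambda_s-\omega_j]_q!}$, so that $\prod_{j=1}^\ell\frac{[\lambda_s-\omega_{j+1}]_q!}{[\lambda_s-\omega_j]_q!}$ telescopes to $\frac{[\lambda_s-\omega_{\ell+1}]_q!}{[\lambda_s-\omega_1]_q!}=\frac{[\lambda_s]_q!}{[\lambda_s-\omega_1]_q!}=\frac{[\lambda_s]_q!}{[\omega_1]_q!}$ — wait, that is $[\lambda_s]_q!/[\lambda_s-\omega_1]_q!$, not $[\lambda_s]_q!/[\omega_1]_q!$, so I must be careful: the target is $[\lambda_s]_q!/[\mu'_1-\nu'_1]_q!=[\lambda_s]_q!/[\lambda_s-\omega_1]_q!$, and indeed $\mu'_1-\nu'_1=\omega_0-\omega_1=\lambda_s-\omega_1$, so the telescoping product matches $1/[\mu'_1-\nu'_1]_q!$ exactly, leaving the remaining factors $\prod_{j\geq1}{\nu'_j-\nu'_{j+1}\brack \omega_j-\omega_{j+1}}_q$ to coincide with those in $\theta_{\mu'/\nu'}(q)$.

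Finally I would tally the pure powers of $q$. From~\cref{GC-eq} the explicit power of $q$ not coming from the $\mathrm{C}$-factors is $\lambda_s(|\mu|-\lambda_s-\nu'_1)+\sum_{j=1}^\ell\omega_{j+1}(\nu'_j-\nu'_{j+1})$, and from the $\mathrm{C}$-factors we pick up an additional $\sum_{j=1}^\ell\binom{\omega_j-\omega_{j+1}}{2}=\sum_{j=1}^\ell\binom{\mu'_{j+1}-\nu'_{j+1}}{2}$; adding these gives precisely $\alpha(\mu,\nu,\lambda_s)$ as written in~\cref{alpha-beta}. (One must also check that $\theta_{\mu'/\nu'}(q)$, being a ratio of $q$-factorials and a product of $q$-binomials, is a polynomial in $q$ with nonzero constant term, so that no further powers of $q$ are hidden inside it — this is immediate from the fact that $q$-binomials are polynomials with constant term $1$ and the factorial ratio $[\lambda_s]_q!/[\lambda_s-\omega_1]_q!$ likewise has constant term $1$.) I expect the main obstacle to be purely bookkeeping: keeping the index shifts straight between $\omega_j-\omega_{j+1}$, $\mu'_{j+1}-\nu'_{j+1}$, and $\mu'_j-\nu'_j=\omega_{j-1}-\omega_j$ so that the telescoping lands on $[\mu'_1-\nu'_1]_q!$ rather than an off-by-one variant, and making sure the edge cases $\omega_{\ell+1}=0$ and $\nu'_{\ell+1}=0$ are consistently applied throughout.
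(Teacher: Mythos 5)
Your proposal is correct and follows essentially the same route as the paper: expand each $\mathrm{C}^{\omega_j-\omega_{j+1}}_{\nu'_j-\nu'_{j+1}}$ via \cref{Cnr}, use $\omega_j-\omega_{j+1}=\mu'_{j+1}-\nu'_{j+1}$ to collect the powers of $q$ and $q-1$, and telescope $\prod_j{\lambda_s-\omega_{j+1}\brack\omega_j-\omega_{j+1}}_q[\omega_j-\omega_{j+1}]_q!$ into $[\lambda_s]_q!/[\lambda_s-\omega_1]_q!=[\lambda_s]_q!/[\mu'_1-\nu'_1]_q!$, recovering $\theta_{\mu'/\nu'}(q)$. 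The momentary $[\omega_1]_q!$ versus $[\lambda_s-\omega_1]_q!$ slip is correctly caught and resolved in your own text, so there is no gap.
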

\begin{proof}
From the definition of $\mathrm{C}^r_l$ given in~\cref{Cnr} and using the fact that $\omega_j-\omega_{j+1}=\mu'_{j+1}-\nu'_{j+1}$, one obtains
\begin{equation}\label{C:equality}
    \begin{split}
        \mathrm{C}_{\nu'_j-\nu'_{j+1}}^{\omega_j-\omega_{j+1}}&=q^{\binom{\mu'_{j+1}-\nu'_{j+1}}{2}}\,(q-1)^{\mu'_{j+1}-\nu'_{j+1}} \cdot [\omega_j-\omega_{j+1}]_q! \cdot {\nu'_j-\nu'_{j+1} \brack \mu_{j+1}'-\nu'_{j+1}}_q.
    \end{split}
\end{equation}
Furthermore, using $\lambda_s-\omega_1=\mu'_1-\nu'_1$, we find
\begin{align*}
\prod_{j=1}^{\ell} {\lambda_s-\omega_{j+1} \brack \omega_j-\omega_{j+1}}_q\, [\omega_j-\omega_{j+1}]_q!\, {\nu'_j-\nu'_{j+1} \brack \mu_{j+1}'-\nu'_{j+1}}_q&=\frac{[\lambda_s]_q!}{[\lambda_s-\omega_1]_q!}\cdot\prod_{j=1}^\ell {\nu'_j-\nu'_{j+1} \brack \mu_{j+1}'-\nu'_{j+1}}_q
=\theta_{\mu'/\nu'}(q).
\end{align*}
Using this and~\cref{GC-eq,C:equality}, we obtain
\begin{equation}
\label{eq:G*final}
\begin{split}
&\G^*(\mu,\nu,\lambda_s)=q^{\lambda_s(|\mu|-\lambda_s-\nu_1')}\prod_{j=1}^{\ell}q^{\omega_{j+1}(\nu'_j-\nu'_{j+1})}{\lambda_s-\omega_{j+1}\brack \omega_{j}-\omega_{j+1}}_q\,\mathrm{C}^{\omega_j-\omega_{j+1}}_{\nu'_j-\nu'_{j+1}}
=
q^E\theta_{\mu'/\nu'}(q)
,
\end{split}
\end{equation}
where 
\begin{equation}\label{dif:E}
E\defin  q^{\lambda_s(|\mu|-\lambda_s-\nu_1')+\sum_{j=1}^\ell\binom{\mu'_{j+1}-\nu'_{j+1}}{2}+\sum_{j=1}^\ell \omega_{j+1}(\nu'_j-\nu'_{j+1})}\, (q-1)^{\sum_{j=1}^\ell\left(\mu'_{j+1}-\nu'_{j+1}\right)}.
\end{equation}
Observe that
$$
\sum_{i=1}^\ell(\mu'_{j+1}-\nu'_{j+1})=(n-\mu'_1)-(n-\lambda_s-\nu'_1)=\lambda_s-(\mu'_1-\nu'_1). 
$$
Combining this with~\cref{eq:G*final} proves the statement.  
\end{proof}
Recall that a polynomial $f(q)$ of degree $N$ is called palindromic if $q^Nf(1/q)=f(q)$.
\begin{lemma}\label{deg_gama:lemma}
    Let $\nu$ be a partition of $n-\lambda_s$ with $\ell=\ell(\nu')$. Then for any $\mu\in\HH_{\lambda_s}(\nu)$ the polynomial $\theta_{\mu'/\nu'}(q)$ is palindromic of degree
    \begin{equation*}
        \gamma(\mu,\nu,\lambda_s)\defin\binom{\lambda_s}{2}-\binom{\mu_1'-\nu_1'}{2}+\sum_{j=1}^\ell (\nu_j'-\mu_{j+1}')(\mu_{j+1}'-\nu_{j+1}').
   \end{equation*}
\end{lemma}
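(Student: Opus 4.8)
The plan is to read off $\theta_{\mu'/\nu'}(q)$ directly from its defining product \eqref{theta:eq}, observe that each factor appearing in that product is individually a palindromic polynomial, and then invoke the elementary fact that a product of palindromic polynomials is palindromic with degree the sum of the degrees. First I would record the three ingredients I need. (i) For every $k\geq 1$ the $q$-integer $[k]_q=1+q+\cdots+q^{k-1}$ is palindromic of degree $k-1$, so for $a\geq b\geq 0$ the ratio $[a]_q!/[b]_q!=[b+1]_q[b+2]_q\cdots[a]_q$ is palindromic of degree $\sum_{k=b+1}^{a}(k-1)=\binom a2-\binom b2$. (ii) For $a\geq b\geq 0$ the Gaussian binomial ${a \brack b}_q$ is palindromic of degree $b(a-b)$, that is $q^{b(a-b)}{a \brack b}_{1/q}={a \brack b}_q$; this is the standard symmetry of its coefficients (they count subdiagrams fitting in a $b\times(a-b)$ rectangle, which is invariant under complementation). (iii) If $f,g$ are palindromic of degrees $d_f,d_g$, then $fg$ is palindromic of degree $d_f+d_g$, since $q^{d_f+d_g}(fg)(1/q)=\bigl(q^{d_f}f(1/q)\bigr)\bigl(q^{d_g}g(1/q)\bigr)=f(q)g(q)$.

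Next I would specialize \eqref{theta:eq} with $\eta=\mu'$ and $\rho=\nu'$. Since $|\mu'|-|\nu'|=n-(n-\lambda_s)=\lambda_s$, this gives
\[
\theta_{\mu'/\nu'}(q)=\frac{[\lambda_s]_q!}{[\mu_1'-\nu_1']_q!}\ \prod_{i\geq 1}{\nu_i'-\nu_{i+1}' \brack \mu_{i+1}'-\nu_{i+1}'}_q ,
\]
where the product is effectively over $1\le i\le\ell$, the remaining factors being ${0 \brack 0}_q=1$. Because $\mu\in\H_{\lambda_s}(\nu)$, \cref{Div-alg} tells us $\mu'/\nu'$ is a horizontal $\lambda_s$-strip, hence $\nu_i'\le\mu_i'\le\nu_{i-1}'$ for all $i$; in particular $0\le\mu_1'-\nu_1'\le\lambda_s$ and $0\le\mu_{i+1}'-\nu_{i+1}'\le\nu_i'-\nu_{i+1}'$, so every factor above is a genuine polynomial to which (i) and (ii) apply. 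Applying (i) to the prefactor yields degree $\binom{\lambda_s}{2}-\binom{\mu_1'-\nu_1'}{2}$, and applying (ii) to each Gaussian binomial yields degree $(\mu_{i+1}'-\nu_{i+1}')\bigl((\nu_i'-\nu_{i+1}')-(\mu_{i+1}'-\nu_{i+1}')\bigr)=(\nu_i'-\mu_{i+1}')(\mu_{i+1}'-\nu_{i+1}')$. Summing these with (iii) shows $\theta_{\mu'/\nu'}(q)$ is palindromic of degree exactly $\gamma(\mu,\nu,\lambda_s)$.

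I do not expect a serious obstacle here: the argument is bookkeeping built on the two standard palindromicity facts. The only point requiring care is verifying that all the binomial arguments and the $q$-factorial ratio are well-formed (nonnegative), which is precisely the horizontal-strip information furnished by \cref{Div-alg} and already flagged in the remark following \cref{theta:dif}; and checking that the degree arithmetic $\sum_{k=b+1}^{a}(k-1)=\binom a2-\binom b2$ reproduces the first two terms of $\gamma$.
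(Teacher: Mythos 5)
Your proposal is correct and follows essentially the same route as the paper: palindromicity of the $q$-integers (hence of the $q$-factorial ratio and Gaussian binomials) gives palindromicity of $\theta_{\mu'/\nu'}(q)$, and the degree is obtained by summing $\deg_q\bigl([\lambda_s]_q!/[\mu_1'-\nu_1']_q!\bigr)=\binom{\lambda_s}{2}-\binom{\mu_1'-\nu_1'}{2}$ with $\deg_q{\nu_i'-\nu_{i+1}'\brack \mu_{i+1}'-\nu_{i+1}'}_q=(\nu_i'-\mu_{i+1}')(\mu_{i+1}'-\nu_{i+1}')$. Your version merely spells out the well-formedness of the factors via the horizontal-strip condition and the product rule for palindromic polynomials, which the paper leaves implicit.
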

\begin{proof}
Since $[n]_q$ is palindromic, it follows immediately that $\theta_{\mu'/\nu'}(q)$ is palindromic. Next note that 
 $\deg_q([n]_q!)=\binom{n}{2}$ and also
\begin{equation}\label{degg_q}
    \deg_q\left( {n \brack k}_q\right)= \binom{n}{2}-\binom{k}{2}-\binom{n-k}{2}
=k(n-k).
\end{equation}    
The definition of $\theta_{\mu'/\nu'}(q)$ and~\cref{degg_q} prove the lemma. 
\end{proof}

\begin{lemma}\label{alph_simplify:lemma}
    Let $\nu$ be a partition of $n-\lambda_s$.
    Then for any $\mu\in\HH_{\lambda_s}(\nu)$ we have
    \begin{equation*}
        \alpha(\mu,\nu,\lambda_s)+\gamma(\mu,\nu,\lambda_s)=\lambda_s|\mu|-\lambda_s^2+\fn(\nu)-\fn(\mu)-\lambda_s+\ell(\mu)-\ell(\nu)+\binom{\lambda_s}{2}.
    \end{equation*}
\end{lemma}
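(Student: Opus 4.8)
The proof is a direct verification, so the plan is simply to expand both sides and simplify while keeping careful track of the boundary indices. Throughout put $\ell=\ell(\nu')=\nu_1$ and, recalling \cref{lem:iffcondition}(4), introduce $\Delta_j:=\omega_{j-1}-\omega_j=\mu'_j-\nu'_j$ for $1\le j\le\ell+1$; then $\omega_j=\sum_{i>j}\Delta_i$, and the conventions $\omega_0=\lambda_s$, $\omega_{\ell+1}=\nu'_{\ell+1}=0$ give $\sum_{j=1}^{\ell+1}\Delta_j=\lambda_s$ by parts (1) and (5) of \cref{lem:iffcondition}. In this notation $\binom{\mu'_{j+1}-\nu'_{j+1}}{2}=\binom{\Delta_{j+1}}{2}$ and $(\nu'_j-\mu'_{j+1})(\mu'_{j+1}-\nu'_{j+1})=(\nu'_j-\nu'_{j+1})\Delta_{j+1}-\Delta_{j+1}^2$, so
\[
\gamma(\mu,\nu,\lambda_s)=\binom{\lambda_s}{2}-\binom{\Delta_1}{2}+\sum_{j=1}^{\ell}(\nu'_j-\nu'_{j+1})\Delta_{j+1}-\sum_{j=1}^{\ell}\Delta_{j+1}^2 .
\]

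First I would add $\alpha$ and $\gamma$ and merge the two sums that contain $\omega$. The term $\sum_{j=1}^\ell\omega_{j+1}(\nu'_j-\nu'_{j+1})$ coming from $\alpha$ and the term $\sum_{j=1}^\ell(\nu'_j-\nu'_{j+1})\Delta_{j+1}$ coming from $\gamma$ combine into $\sum_{j=1}^\ell(\nu'_j-\nu'_{j+1})\omega_j$, since $\omega_{j+1}+\Delta_{j+1}=\omega_j$. For the remaining terms, $\alpha$ contributes $\sum_{j=2}^{\ell+1}\binom{\Delta_j}{2}$ and $\gamma$ contributes $-\binom{\Delta_1}{2}-\sum_{j=2}^{\ell+1}\Delta_j^2$; applying $\binom{\Delta}{2}-\Delta^2=-\binom{\Delta}{2}-\Delta$ on the range $j=2,\dots,\ell+1$ and using $\sum_{j=2}^{\ell+1}\Delta_j=\lambda_s-\Delta_1$ turns this into $-\sum_{j=1}^{\ell+1}\binom{\Delta_j}{2}-\lambda_s+\Delta_1$. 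Collecting everything,
\[
\alpha+\gamma=\lambda_s(|\mu|-\lambda_s-\nu'_1)+\binom{\lambda_s}{2}-\sum_{j=1}^{\ell+1}\binom{\Delta_j}{2}-\lambda_s+\Delta_1+\sum_{j=1}^{\ell}(\nu'_j-\nu'_{j+1})\omega_j .
\]

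Next I would rewrite the right-hand side of the lemma. The identity $\binom{\mu'_j}{2}-\binom{\nu'_j}{2}=\Delta_j\nu'_j+\binom{\Delta_j}{2}$, valid for every $j$ because $\nu'_{\ell+1}=0$ and $\mu'_j=\nu'_j$ for $j>\ell+1$, gives $\fn(\nu)-\fn(\mu)=-\sum_{j=1}^{\ell}\Delta_j\nu'_j-\sum_{j=1}^{\ell+1}\binom{\Delta_j}{2}$, while $\ell(\mu)-\ell(\nu)=\mu'_1-\nu'_1=\Delta_1$. Substituting these into the claimed right-hand side and comparing with the displayed expression for $\alpha+\gamma$, after cancellation the claim reduces to
\[
\sum_{j=1}^{\ell}(\nu'_j-\nu'_{j+1})\omega_j+\sum_{j=1}^{\ell}\Delta_j\nu'_j=\lambda_s\nu'_1 .
\]
This I would prove by summation by parts: reindexing and using $\nu'_{\ell+1}=0$ gives $\sum_{j=1}^\ell(\nu'_j-\nu'_{j+1})\omega_j=\nu'_1\omega_1-\sum_{j=2}^{\ell}\nu'_j\Delta_j$, so the left-hand side collapses to $\nu'_1\omega_1+\nu'_1\Delta_1=\nu'_1(\omega_1+\Delta_1)=\nu'_1\omega_0=\lambda_s\nu'_1$, as required.

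The argument is entirely mechanical, so the only real danger is bookkeeping: the sums appearing in $\alpha$, $\gamma$, and $\fn$ run over slightly different ranges ($j=1,\dots,\ell$ versus $j=2,\dots,\ell+1$), and the three boundary conventions $\omega_0=\lambda_s$, $\omega_{\ell+1}=0$, $\nu'_{\ell+1}=0$ must be tracked carefully — that is where an off-by-one or a sign slip would occur if one were careless.
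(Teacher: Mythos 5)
Your computation is correct, and it is essentially the same argument as the paper's: a direct expansion of $\alpha+\gamma$ using $\mu'_j-\nu'_j=\omega_{j-1}-\omega_j$, summation by parts, and elementary binomial identities, with the boundary conventions $\omega_0=\lambda_s$, $\omega_{\ell+1}=\nu'_{\ell+1}=0$ handled correctly. The only difference is organizational (you reduce both sides to the identity $\sum_{j=1}^{\ell}(\nu'_j-\nu'_{j+1})\omega_j+\sum_{j=1}^{\ell}\Delta_j\nu'_j=\lambda_s\nu'_1$ rather than rewriting the left side into the right side directly), which does not change the substance of the proof.
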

\begin{proof}
    Recall that $\mu_{j+1}'-\nu_{j+1}'=\omega_j-\omega_{j+1}$ and $\omega_0=\lambda_s$. Thus, 
    $
    \omega_{j+1}=\lambda_s-\sum_{i=1}^{j+1}(\mu_i'-\nu_i')
    $.
    Set $\ell=\ell(\nu')$.
By applying summation by parts, and the fact that $\nu_{\ell+1} = 0$, we obtain
\begin{alignat}{1}\label{alpha_part1:eq}
    &\sum_{j=1}^{\ell}\omega_{j+1}(\nu_j'-\nu_{j+1}')=\lambda_s\nu_1'-\sum_{j=1}^\ell\sum_{i=1}^{j+1}(\mu_i'-\nu_i')(\nu_j'-\nu_{j+1}')\nonumber\\ 
    &=\lambda_s\nu_1'-\nu_1'(\mu_1'-\nu_1')-\sum_{j=1}^\ell \nu_j'(\mu_{j+1}'-\nu_{j+1}') \nonumber\\
    &=\lambda_s\nu_1'-\nu_1'(\mu_1'-\nu_1')
    -\sum_{j=1}^\ell (\nu_j'-\mu_{j+1}')(\mu_{j+1}'-\nu_{j+1}')-\sum_{j=1}^\ell \mu_{j+1}'(\mu_{j+1}'-\nu_{j+1}'). 
\end{alignat}
We now simplify $-\sum_{j=1}^\ell \mu_{j+1}'(\mu_{j+1}'-\nu_{j+1}')$ using the identity  
$$
-ab=\binom{b-a}{2}-\binom{b}{2}-\binom{a}{2}-a.
$$
Set $b=\mu_{j+1}'$ and $a=\mu_{j+1}'-\nu_{j+1}'$. Then 
\begin{alignat}{1}\label{alpha_part2:eq}
   & -\sum_{j=1}^\ell \mu_{j+1}'(\mu_{j+1}'-\nu_{j+1}')
    =\sum_{j=1}^\ell\binom{\nu_{j+1}'}{2}-\sum_{j=1}^\ell\binom{\mu_{j+1}'}{2}-\sum_{j=1}^\ell\binom{\mu_{j+1}'-\nu_{j+1}'}{2}-\sum_{j=1}^\ell (\mu_{j+1}'-\nu_{j+1}') \nonumber\\
    &=\fn(\nu)-\fn(\mu)-\binom{\nu'_1}{2}+\binom{\mu_1'}{2}-\lambda_s+\mu_1'-\nu_1'-\sum_{j=1}^\ell\binom{\mu_{j+1}'-\nu_{j+1}'}{2}.
\end{alignat}
From now on, the proof is a straightforward simplification. From
the definition of $\alpha(\mu,\nu,\lambda_s)$ in~\cref{alpha-beta},~\cref{deg_gama:lemma}, and~\cref{alpha_part1:eq,alpha_part2:eq} we obtain
\begin{equation*}
\begin{split}
\alpha(\mu,\nu,\lambda_s)+\gamma(\mu,\nu,\lambda_s)&=\lambda_s|\mu|-\lambda_s^2+\fn(\nu)-\fn(\mu)-\lambda_s+\binom{\lambda_s}{2}\\
&-\nu_1'(\mu_1'-\nu_1')-\binom{\nu'_1}{2}+\binom{\mu_1'}{2}-\binom{\mu_1'-\nu_1'}{2}+\mu_1'-\nu_1'.
\end{split}
\end{equation*}
But 
$$
-\nu_1'(\mu_1'-\nu_1')-\binom{\nu'_1}{2}+\binom{\mu_1'}{2}-\binom{\mu_1'-\nu_1'}{2}+\mu_1'-\nu_1'=\mu_1'-\nu_1'=\ell(\mu)-\ell(\nu),
$$
and this completes the proof of the lemma.  
\end{proof}

\begin{proposition}
\label{F_simpler:Corr}
\label{F_chain:prop} Let $\lambda$ and $\mu$ be two partitions of $n$ with $\lambda\less\mu'$. Set $s=\ell(\lambda)$. Furthermore, let $\mathcal{F'}:=\mathcal{F'}(\mu,\lambda)$ denote the set of all chains  \[
\emptyset=\mu^{0}\subseteq \mu^{1}\subseteq \cdots\subseteq \mu^{s}=\mu\] of partitions such that each $(\mu^{j}/\mu^{j-1})'$ is a horizontal $\lambda_j$-strip. Then $\F_{\mu\lambda}(q)$ is equal to
\begin{equation}\label{Fmula:comb2}
(q-1)^{n-\ell(\mu)}\sum_{\mathcal{F'}} q^{
\sum_{j=1}^s\alpha(\mu^j,\,\mu^{j-1},\,\lambda_j)
+
\gamma(\mu^j,\,\mu^{j-1},\,\lambda_j)
}\prod_{j=1}^s\theta_{(\mu^j/\mu^{j-1})'}(1/q).
\end{equation}
\end{proposition}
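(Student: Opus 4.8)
The statement follows by iterating the recursion of~\cref{recursive_theortem} in the special case $\Lambda=\lambda$ a partition, together with the closed form for $\G^*(\mu,\nu,\lambda_s)$ obtained in~\cref{G:Simplification}. First I would set up the induction on $s=\ell(\lambda)$. The base case $s=1$ forces $\mu=(1^n)$ (since $\u_\lambda=0$ for $\lambda=(n)$), and then both sides of~\cref{Fmula:comb2} equal $1$: the only chain in $\mathcal{F'}$ is $\emptyset=\mu^0\subseteq\mu^1=\mu$, and a direct check of $\alpha$, $\gamma$, and $\theta_{(\mu/\emptyset)'}$ at this chain gives the trivial value, in agreement with~\cref{Lambda-length}.

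For the inductive step, I would apply~\cref{rec:F} with $m=\lambda_s$, writing
\[
\F_{\mu\lambda}(q)=\sum_{\substack{\nu\,\vdash\,n-\lambda_s\\ \lambda^*\less\nu'\\ \mu\in\H_{\lambda_s}(\nu)}}\G^*(\mu,\nu,\lambda_s)\,\F_{\nu\lambda^*}(q),
\]
then substitute the inductive hypothesis for $\F_{\nu\lambda^*}(q)$ (a sum over chains $\emptyset=\mu^0\subseteq\cdots\subseteq\mu^{s-1}=\nu$ with $(\mu^j/\mu^{j-1})'$ a horizontal $\lambda_j$-strip) and, crucially,~\cref{G:Simplification} to rewrite $\G^*(\mu,\nu,\lambda_s)=q^{\alpha(\mu,\nu,\lambda_s)}(q-1)^{\beta(\mu,\nu,\lambda_s)}\theta_{\mu'/\nu'}(q)$. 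The key bookkeeping observations are: (i) by~\cref{Div-alg}, the condition $\mu\in\H_{\lambda_s}(\nu)$ is exactly that $\mu'/\nu'$ is a horizontal $\lambda_s$-strip, i.e.\ $(\mu^s/\mu^{s-1})'$ with $\mu^s=\mu$, $\mu^{s-1}=\nu$, is a horizontal $\lambda_s$-strip — so extending each length-$(s-1)$ chain by the step $\nu\subseteq\mu$ produces exactly the chains in $\mathcal{F'}(\mu,\lambda)$, and the double sum collapses into a single sum over $\mathcal{F'}$; (ii) $\theta_{\mu'/\nu'}(q)$ has degree $\gamma(\mu,\nu,\lambda_s)$ and is palindromic by~\cref{deg_gama:lemma}, hence $\theta_{\mu'/\nu'}(q)=q^{\gamma(\mu,\nu,\lambda_s)}\theta_{\mu'/\nu'}(1/q)=q^{\gamma(\mu,\nu,\lambda_s)}\theta_{(\mu^s/\mu^{s-1})'}(1/q)$; (iii) the $q$-power exponent in the inductive hypothesis, $\sum_{j=1}^{s-1}\bigl(\alpha(\mu^j,\mu^{j-1},\lambda_j)+\gamma(\mu^j,\mu^{j-1},\lambda_j)\bigr)$, combines with the new contribution $\alpha(\mu,\nu,\lambda_s)+\gamma(\mu,\nu,\lambda_s)$ to give precisely the exponent in~\cref{Fmula:comb2}.

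The remaining point is the exponent of $q-1$. The inductive hypothesis contributes $(q-1)^{(n-\lambda_s)-\ell(\nu)}$ and $\G^*$ contributes an extra $(q-1)^{\beta(\mu,\nu,\lambda_s)}$ with $\beta(\mu,\nu,\lambda_s)=\lambda_s-(\mu_1'-\nu_1')$. Since $\mu_1'-\nu_1'=\ell(\mu)-\ell(\nu)$ (the horizontal $\lambda_s$-strip $\mu'/\nu'$ adds $\mu_1'-\nu_1'$ boxes to the first column), the total power of $q-1$ is $(n-\lambda_s)-\ell(\nu)+\lambda_s-\bigl(\ell(\mu)-\ell(\nu)\bigr)=n-\ell(\mu)$, independent of the chain — which is why it can be pulled out in front of the sum over $\mathcal{F'}$.

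I expect the main obstacle to be purely organizational rather than conceptual: one must verify carefully that the index set of the iterated recursion matches $\mathcal{F'}(\mu,\lambda)$ exactly (using~\cref{Div-alg} at each step and the non-emptiness criterion $\lambda^*\less\nu'$ of~\cref{Gale-Ryser-application}, which is automatically the restriction of $\lambda\less\mu'$ to truncated chains), and that the three exponent identities — the collapse of the $q$-powers, the palindromicity substitution $\theta\mapsto q^\gamma\theta(1/q)$, and the constancy of the $(q-1)$-exponent — all line up term by term. None of these requires new ideas beyond the lemmas already proved; the care lies in matching conventions ($\omega_0=\lambda_s$, $\nu'_{\ell+1}=\omega_{\ell+1}=0$, $\mu'_j=\nu'_j+(\omega_{j-1}-\omega_j)$) consistently across the chain.
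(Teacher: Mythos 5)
Your proposal is correct and matches the paper's own argument: the paper also proves this by iterating the recursion \cref{rec:F} with the closed form for $\G^*$ from \cref{G:Simplification}, identifying $\mu\in\H_{\lambda_s}(\nu)$ with the horizontal-strip condition, using the palindromicity of $\theta$ from \cref{deg_gama:lemma} to pass to $\theta(1/q)$, and checking that the $(q-1)$-exponents telescope to $n-\ell(\mu)$ (the paper sums the $\beta$'s directly, you equivalently use $\mu_1'-\nu_1'=\ell(\mu)-\ell(\nu)$). Your framing as induction on $s$ versus the paper's ``iterative substitution'' is only a cosmetic difference, and your handling of the index set (dropping $\lambda^*\less\nu'$ because such $\nu$ carry no chains, i.e.\ $\F_{\nu\lambda^*}=0$) agrees with the paper.
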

\begin{proof}
This follows from iterative substitution of the $\F_{\nu\lambda^*}$ and the explicit formula for $\G^*(\mu,\nu,\lambda_s)$ given in~\cref{G:Simplification}
in the recursive relation~\cref{rec:F}. The condition $\lambda^*\less\nu'$ can be dropped because it is equivalent to  $\F_{\nu\lambda^*}(q)\neq 0$. The condition $\mu\in\HH_{\lambda_s}(\nu)$ states that $\mu'/\nu'$ is a horizontal $\lambda_s$-strip. 
Recall that $$\beta(\mu^j,\mu^{j-1},\lambda_j)=\lambda_j-\left(\left(\mu^j\right)'_1-\left(\mu^{j-1}\right)'_1\right).$$
Therefore
$$
\sum_{j=1}^s\beta(\mu^j,\mu^{j-1},\lambda_j)=\sum_{j=1}^s \left(\lambda_j-\left(\left(\mu^j\right)'_1-\left(\mu^{j-1}\right)'_1\right)\right)=n-\left(\mu^{s}\right)'_1=n-\mu'_1=n-\ell(\mu). 
$$
Finally,~\cref{deg_gama:lemma} allows the substitution of $\theta_{(\mu^j/\mu^{j-1})'}(q)$ by
\[
q^{\gamma(\mu^j,\,\mu^{j-1},\,\lambda_j)}\theta_{(\mu^j/\mu^{j-1})'}(1/q),
\] which complets the proof.
\end{proof}

\begin{proposition}\label{Simplify:prop} Let $\emptyset=\mu^{0}\subseteq \mu^{1}\subseteq \cdots\subseteq \mu^{s}=\mu$ be a chain of partitions such that $(\mu^{j}/\mu^{j-1})'$ is a horizontal $\lambda_j$-strip. Then
$$
\sum_{j=1}^s \left(\alpha(\mu^j,\mu^{j-1},\lambda_j)+\gamma(\mu^j,\mu^{j-1},\lambda_j)\right)=\binom{n}{2}+\ell(\mu)-\fn(\mu)-n,
$$
where $\fn(\mu)$ is defined in~\cref{n_mu:dif}. 
\end{proposition}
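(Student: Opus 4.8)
The statement should follow by applying \cref{alph_simplify:lemma} to each summand, summing, telescoping, and invoking one elementary binomial identity. First I would check that the hypotheses of \cref{alph_simplify:lemma} are met at every step: the chain condition says $(\mu^j/\mu^{j-1})'$ is a horizontal $\lambda_j$-strip, i.e.\ $(\mu^j)'/(\mu^{j-1})'$ is a horizontal strip of size $\lambda_j$, which by \cref{Div-alg} is precisely the assertion $\mu^j\in\H_{\lambda_j}(\mu^{j-1})$. Introducing the partial sums $n_j:=|\mu^j|=\lambda_1+\cdots+\lambda_j$ (so $n_0=0$, $n_s=n$, and $|\mu^{j-1}|=n_j-\lambda_j=n_{j-1}$), \cref{alph_simplify:lemma} with $\mu\mapsto\mu^j$, $\nu\mapsto\mu^{j-1}$, $\lambda_s\mapsto\lambda_j$ yields
\begin{equation*}
\alpha(\mu^j,\mu^{j-1},\lambda_j)+\gamma(\mu^j,\mu^{j-1},\lambda_j)=\lambda_j n_{j-1}+\binom{\lambda_j}{2}+\fn(\mu^{j-1})-\fn(\mu^j)-\lambda_j+\ell(\mu^j)-\ell(\mu^{j-1}),
\end{equation*}
where I have already simplified $\lambda_j|\mu^j|-\lambda_j^2=\lambda_j(n_j-\lambda_j)=\lambda_j n_{j-1}$.

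Next I would sum this identity over $j=1,\dots,s$. The contributions $\fn(\mu^{j-1})-\fn(\mu^j)$ telescope to $\fn(\mu^0)-\fn(\mu^s)=-\fn(\mu)$ since $\mu^0=\emptyset$; the contributions $\ell(\mu^j)-\ell(\mu^{j-1})$ telescope to $\ell(\mu)$; and $\sum_{j}\lambda_j=n$. This reduces the desired equality to the claim
\begin{equation*}
\sum_{j=1}^s\left(\lambda_j n_{j-1}+\binom{\lambda_j}{2}\right)=\binom{n}{2}.
\end{equation*}

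For the final step I would note that $\lambda_j n_{j-1}=\lambda_j(\lambda_1+\cdots+\lambda_{j-1})=\sum_{i<j}\lambda_i\lambda_j$, so $\sum_j\lambda_j n_{j-1}=\sum_{i<j}\lambda_i\lambda_j$, and then
\begin{equation*}
\sum_{j=1}^s\left(\lambda_j n_{j-1}+\binom{\lambda_j}{2}\right)=\sum_{i<j}\lambda_i\lambda_j+\sum_{j=1}^s\binom{\lambda_j}{2}=\binom{\lambda_1+\cdots+\lambda_s}{2}=\binom{n}{2},
\end{equation*}
the middle equality being the iterated identity $\binom{a+b}{2}=\binom{a}{2}+\binom{b}{2}+ab$. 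Substituting back gives $\binom{n}{2}+\ell(\mu)-\fn(\mu)-n$, as required. There is no genuine obstacle in this argument; the only points needing care are translating the chain hypothesis into membership in $\H_{\lambda_j}(\mu^{j-1})$ via \cref{Div-alg} and bookkeeping the telescoping indices correctly.
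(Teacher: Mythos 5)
Your proposal is correct and follows essentially the same route as the paper: apply \cref{alph_simplify:lemma} to each link of the chain, sum, telescope the $\fn$ and $\ell$ terms, and absorb the quadratic terms in the $\lambda_j$ via an elementary binomial identity (you use $\sum_{j}\bigl(\lambda_j n_{j-1}+\binom{\lambda_j}{2}\bigr)=\binom{n}{2}$, the paper equivalently uses $\sum_j\lambda_j(\lambda_1+\cdots+\lambda_j)=\binom{n+1}{2}+\fn(\lambda')$). The bookkeeping, including the translation of the chain hypothesis into $\mu^j\in\H_{\lambda_j}(\mu^{j-1})$ via \cref{Div-alg}, is accurate.
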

\begin{proof}
By~\cref{alph_simplify:lemma} we have
\begin{align*}
\alpha(\mu^j,\mu^{j-1},\lambda_j)+\gamma(\mu^j,\mu^{j-1},\lambda_j)
&=
\lambda_j(\lambda_1+\cdots +\lambda_j)\\
&-\binom{\lambda_j}{2}-2\lambda_j+\fn(\mu^{j-1})-\fn(\mu^{j})+\ell(\mu^j)-\ell(\mu^{j-1}).
\end{align*}
But we have 
\[
\sum_{j=1}^s\lambda_j(\lambda_1+\cdots+\lambda_j)=
\frac{1}{2}
\left(\left(\sum_{j=1}^s\lambda_j\right)
^2+\sum_{j=1}^s\lambda_j^2\right)
=\binom{n+1}{2}+\fn(\lambda').
\]
It follows that 
\begin{align*}
\sum_{j=1}^s \big(\alpha(\mu^j,\mu^{j-1},\lambda_j)+\gamma(\mu^j,\mu^{j-1},\lambda_j)\big)
&=
\sum_{j=1}^s\lambda_j(\lambda_1+\cdots+\lambda_j)-\fn(\lambda')-2n
-\fn(\mu)+\ell(\mu)\\
&
=\binom{n}{2}-n+\ell(\mu)-\fn(\mu).
\qedhere\end{align*}
\end{proof}
Parts (1) and (2) of~\cref{mainthm:Fmula-comb} readily follow from~\cref{F_simpler:Corr} and~\cref{Simplify:prop}. For Part (2) note that 
$$
\deg_q\left( \F_{\mu\lambda}(q)\right)=n-\ell(\mu)+\binom{n}{2}+\ell(\mu)-\fn(\mu)-n=\binom{n}{2}-\fn(\mu).
$$
In general, for $\lambda$ and $\mu$ partitions of $n$ with $\lambda \less \mu$, we have $\# \mathcal{F}(\mu,\lambda) =\K_{\mu\lambda}>0$.
From this we conclude that the leading coefficient of $\F_{\mu\lambda}(q)$ is $\K_{\mu'\lambda}$.


\section{Macdonald polynomials and the proof of~\cref{Intro_them: Main_Macdonald}
}
The goal of this section is to prove~\cref{Intro_them: Main_Macdonald}. We use the same notation introduced in~\cref{theta:dif} and~\cref{psi:dif}.
As before, we work with fixed partitions $\lambda $ and $\mu$ of $n$ with $\lambda\less \mu'$. 
Note that $\ell(\mu') \leq \ell(\lambda)$ since we assume $\lambda \less \mu'$. Thanks to~\cref{monomila_expansion:Mac}, we have 
\begin{equation}\label{monomila_expansion_1:Mac}
    \P_{\mu'}(\x;q,0)=\sum_{\nu}a_{\mu'\nu}(q)m_\nu(\x),
\end{equation}
\begin{proof}[Proof of~\cref{Intro_them: Main_Macdonald} (relation of $\F_{\mu\lambda}(q)$ to  $\P_{\mu'}(\x,1/q,0)$]
By~\cref{bmuamu} and the identity 
$
[n]_{1/q}!=q^{-\binom{n}{2}}[n]_q!\, ,
$
 we deduce
\begin{equation}\label{ba:eq}
b_{\mu'\lambda}\left(1/q\right)=q^{\sum_{i\geq 1}\left(\binom{\mu'_i-\mu'_{i+1}}{2}-\binom{\lambda_i}{2}\right)}\,\left(\prod_{i\geq 1}\frac{[\lambda_i]_q!}{[\mu'_i-\mu'_{i+1}]_q!}\right)a_{\mu'\lambda}\left(1/q\right). 
\end{equation}
From~\cref{monomila_expansion_1:Mac} we have $a_{\mu'\lambda}\left(1/q\right)=[\x^\lambda]\P_{\mu'}(x;1/q,0)$ and so from~\cref{ba:eq} and the first part of~\cref{mainthm:Fmula-comb}  we obtain
\begin{equation}
\label{eq:Fmula-q}
\F_{\mu\lambda}(q)=(q-1)^{n-\ell(\mu)}q^{E'}\prod_{i\geq 1}\frac{[\lambda_i]_q!}{[\mu_i'-\mu_{i+1}']_q!}\, [\x^\lambda]\P_{\mu'}(\x;1/q,0),
\end{equation}
where $E'\defin {\sum_{i\geq 1}\binom{\mu'_i-\mu'_{i+1}}{2}+\binom{n}{2}+\ell(\mu)-\fn(\mu)-\fn(\lambda')-n}$. 
This finishes the proof of the claim relating $\F_{\mu\lambda}(q)$ to  $\P_{\mu'}(\x,1/q,0)$.
\end{proof}
In the rest of this section we prove
the formula in~\cref{Intro_them: Main_Macdonald} that relates $\F_{\mu\lambda}(q)$ to  $\Q_{\mu'}(\x,1/q,0)$.
 Recall that the $q$-Pochhammer symbol, also called the $q$-shifted factorial, is defined for each integer $n\geq 0$ by 
$$
(a;q)_n\defin (1-a)(1-aq)\cdots(1-aq^{n-1}),\qquad (a;q)_0\defin 1.
$$
Let $\langle\cdot,\cdot\rangle_{q,t}$ denote the $q,t$-scalar product on symmetric functions~\cite[Sec. VI.1, p. 306]{Macdonald}, so that 
$$
\langle p_\lambda,p_\mu\rangle_{q,t}=\delta_{\lambda\mu}z_{\lambda}\prod_{i=1}^{\ell(\lambda)}\frac{1-q^{\lambda_i}}{1-t^{\lambda_i}},
$$
where the $p_\lambda$ are the power sum symmetric functions and $z_\lambda$ is defined in~\cref{z_rho}. 
For any partition $\mu$ of $n$, we define 
$$
b_{\mu}(q,t)\defin \langle\P_\mu(\x;q,t),\P_\mu(\x;q,t)\rangle_{q,t}^{-1}\,.
$$
Then  $\Q_{\mu}(\x;q,t)$ is  defined by
\begin{equation}\label{QPrelation}
\Q_\mu(\x;q,t)=b_\mu(q,t)\P_\mu(\x;q,t).
\end{equation}
Obviously we have $\langle\P_\lambda,\Q_\mu\rangle_{q,t}=\delta_{\lambda\mu}$. For each square $s = (i,j)$ in the diagram of a partition $\mu$, we define its arm-length as  $a(s) := \mu_i - j$
and its leg-length as  $l(s) := \mu'_j - i$.
Note that $a(s)$ and $l(s)$ represent the numbers of squares in the diagram of $\mu$ to the east and to the south of the square $s$, respectively. One can show~\cite[p. 339, Eq. (6.19)]{Macdonald} that
\begin{equation}\label{bmu-dual}
b_\mu(q, t) = \prod_{s\in \mu}\frac{1-q^{a(s)}t^{l(s)+1}}{1-q^{a(s)+1}t^{l(s)}}\, .
\end{equation}
The explicit relation between $\Q_\mu(\x;1/q,0)$ and $\P_\mu(\x;1/q,0)$ is as follows.   
\begin{lemma}\label{b_mu(1/q)} Let $\mu$ be a partition of $n$. 
Then 
\begin{equation}
    \frac{(q-1)^{\mu_1}}{q^{\mu_1+\sum_{j\geq 1}\binom{\mu_j-\mu_{j+1}}{2}}}\Q_{\mu}(\x;1/q,0)=\prod_{j\geq 1}\frac{1}{[\mu_j-\mu_{j+1}]_q!}\P_{\mu}(\x;1/q,0).
\end{equation}
\end{lemma}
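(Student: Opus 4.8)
The plan is to reduce the claimed identity to an elementary evaluation of the normalization constant $b_\mu(q,t)$ at the point $(1/q,0)$. Since $\Q_\mu(\x;q,t)=b_\mu(q,t)\P_\mu(\x;q,t)$ by definition, the asserted identity is equivalent to the scalar identity
\[
\frac{(q-1)^{\mu_1}}{q^{\mu_1+\sum_{j\geq 1}\binom{\mu_j-\mu_{j+1}}{2}}}\,b_\mu(1/q,0)=\prod_{j\geq 1}\frac{1}{[\mu_j-\mu_{j+1}]_q!},
\]
after which one multiplies both sides by $\P_\mu(\x;1/q,0)$. The hypothesis $\ell(\mu)\leq s$ enters only to ensure that $\P_\mu(\x;1/q,0)$ is a nonzero polynomial in $\x=(x_1,\dots,x_s)$, so that the cancellation is legitimate.

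First I would specialize the product formula~\eqref{bmu-dual} at $t=0$. Each numerator factor $1-q^{a(s)}t^{l(s)+1}$ specializes to $1$, since $l(s)+1\geq 1$, while a denominator factor $1-q^{a(s)+1}t^{l(s)}$ specializes to $1-q^{a(s)+1}$ precisely when $l(s)=0$ and to $1$ otherwise. Hence
\[
b_\mu(q,0)=\prod_{\substack{s\in\mu\\ l(s)=0}}\frac{1}{1-q^{a(s)+1}}.
\]

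Next I would identify the boxes $s=(i,j)\in\mu$ with $l(s)=0$: these are exactly the bottom box of each column of the Young diagram. Organizing them by the row $i$ they occupy, row $i$ contributes the boxes sitting in columns $\mu_{i+1}+1,\dots,\mu_i$ (so $\mu_i-\mu_{i+1}$ of them), and the box in column $j$ has arm-length $a(s)=\mu_i-j$, which runs through the values $0,1,\dots,\mu_i-\mu_{i+1}-1$. Therefore
\[
b_\mu(q,0)=\prod_{i\geq 1}\ \prod_{k=1}^{\mu_i-\mu_{i+1}}\frac{1}{1-q^{k}}.
\]

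Finally I would substitute $q\mapsto 1/q$, using $1-q^{-k}=q^{-k}(q-1)[k]_q$, so that $\prod_{k=1}^{m}(1-q^{-k})^{-1}=q^{\binom{m+1}{2}}\bigl((q-1)^{m}\,[m]_q!\bigr)^{-1}$ with $m=\mu_i-\mu_{i+1}$, and then collect the exponents over $i$ using the telescoping identity $\sum_{i\geq 1}(\mu_i-\mu_{i+1})=\mu_1$ together with $\sum_{i\geq 1}\binom{\mu_i-\mu_{i+1}+1}{2}=\mu_1+\sum_{i\geq 1}\binom{\mu_i-\mu_{i+1}}{2}$. This produces the scalar identity displayed above, and multiplying through by $\P_\mu(\x;1/q,0)$ completes the proof. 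I do not anticipate a genuine obstacle here; the only step demanding care is the combinatorial bookkeeping of the leg-zero boxes and their arm-lengths (equivalently, recognizing that their generating product factors as the product of the $[\mu_i-\mu_{i+1}]_q!$), after which the rest is a routine rearrangement of powers of $q$ and $q-1$.
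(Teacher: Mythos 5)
Your proposal is correct and follows essentially the same route as the paper: specialize the arm/leg product formula for $b_\mu(q,t)$ at $t=0$, observe that only the leg-zero boxes contribute so that $b_\mu(q,0)=\prod_{j\geq 1}(q;q)_{\mu_j-\mu_{j+1}}^{-1}$, then substitute $q\mapsto 1/q$ via $(1/q;1/q)_m=(q-1)^m q^{-\binom{m+1}{2}}[m]_q!$ and conclude from $\Q_\mu=b_\mu\P_\mu$. Your argument merely spells out the bookkeeping of the leg-zero boxes that the paper leaves implicit, and the remark about $\ell(\mu)\leq s$ is harmless since the final step only multiplies the scalar identity by $\P_\mu(\x;1/q,0)$.
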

\begin{proof}
    From~\cref{bmu-dual} we infer that
\begin{equation}\label{bmu0-rel}
    b_{\mu}(q,0)=\prod_{\substack{s\in\mu\\ l(s)=0}}\frac{1}{1-q^{a(s)+1}}=\prod_{j\geq 1}\frac{1}{(q\, ;q)_{\mu_j-\mu_{j+1}}}.
\end{equation}
    This combined with
the identity     
$
(1/q\,;1/q)_n=\prod_{j=1}^n\left(1-\frac{1}{q^j}\right)=\frac{(q-1)^n}{q^{n(n+1)/2}}[n]_q!$ gives us
    $$
    b_{\mu}(1/q,0)=\prod_{j\geq 1}\frac{q^{\mu_j-\mu_{j+1}+\binom{\mu_j-\mu_{j+1}}{2}}}{(q-1)^{\mu_j-\mu_{j+1}}[\mu_j-\mu_{j+1}]_q!}=\frac{q^{\mu_1+\sum_{j\geq 1}\binom{\mu_j-\mu_{j+1}}{2}}}{(q-1)^{\mu_1}}\prod_{j\geq 1}\frac{1}{[\mu_j-\mu_{j+1}]_q!}\, . 
    $$
  The lemma follows from this and the definition of $\Q_\mu(\x;q,t)$. 
\end{proof}
\begin{proof}[Proof of~\cref{Intro_them: Main_Macdonald}:
relating $\F_{\mu\lambda}$ to  $\Q_{\mu'}(\x,1/q,0)$]  
From Equation~\cref{eq:Fmula-q} and then \cref{b_mu(1/q)} (after replacing $\mu$ with $\mu'$), we obtain: 
\begin{equation*}
\begin{split}
&\F_{\mu\lambda}(q)=(q-1)^{n-\ell(\mu)}q^{E'}\prod_{i\geq 1}\frac{[\lambda_i]_q!}{[\mu_i'-\mu_{i+1}']_q!}\, [\x^\lambda]\P_{\mu'}(\x;1/q,0) \\
&=(q-1)^{n-\ell(\mu)}q^{E'}\frac{\left(\prod_{j\geq 1}[\lambda_j]_q!\right)(q-1)^{\ell(\mu)}}{q^{\ell(\mu)+\sum_{j\geq 1}\binom{\mu_j'-\mu'_{j+1}}{2}}}[\x^\lambda]\Q_{\mu'}(\x;1/q,0)\\
&=(q-1)^n q^{\binom{n}{2} - \fn(\mu) - \fn(\lambda') - n} \left( \prod_{i \geq 1} [\lambda_i]_q! \right) [\x^\lambda] \Q_{\mu'}(\x; 1/q, 0).
\end{split}
\end{equation*}
This completes the proof. 
\end{proof}

\section{Proof of the third part of~\cref{mainthm:Fmula-comb}} We begin by using the first part of~\cref{mainthm:Fmula-comb} to give another proof of the following result due to Fuchs and Kirillov~\cite[Theorem 4.1]{Fuchs-Kirillov}.

\begin{theorem}\label{Fuchs_Kirillov} Take $\lambda=(1^n)$ and let $\mu$ be an arbitrary partition. Then there exists an integer polynomial $\R_{\mu\lambda}(q)$ with $\R_{\mu\lambda}(0)=1$ and $\R_{\mu\lambda}(1)\neq 0$ such that 
$$
\F_{\mu\lambda}(q)=(q-1)^{n-\ell(\mu)}q^{\binom{n}{2}-\binom{\ell(\mu)}{2}-\sum_{j\geq 1}\mu_j'\mu_{j+1}'}\, \R_{\mu\lambda}(q).
$$
\end{theorem}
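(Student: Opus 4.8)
The plan is to reduce everything to Part~(1) of~\cref{mainthm:Fmula-comb} (already proved) together with the Ram--Schlosser identity~\cref{Ram-Schlosser}. Part~(1) gives
\[
\F_{\mu\lambda}(q)=(q-1)^{n-\ell(\mu)}\,q^{\binom{n}{2}+\ell(\mu)-\fn(\mu)-n}\,b_{\mu'\lambda}(1/q),
\]
so it suffices to understand the polynomial $b_{\mu'\lambda}(q)$ when $\lambda=(1^n)$. In that case each content value occurs exactly once, so in~\cref{def:sqt} every $\beta^i(T)$ is a unit vector and $s_q(T)=1$; hence~\cref{Ram-Schlosser} becomes $b_{\mu'\lambda}(q)=\sum_{T\in\ssyt(\mu',(1^n))}r_q(T)$, a sum over standard Young tableaux of shape $\mu'$ of the products $\prod_{(i,j)\in\mu',\,i\ge 2}[r_{ij}(T)]_q$. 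Since the cell directly above $(i,j)$ always carries a smaller entry, $r_{ij}(T)\ge 1$, and I get at once that $b_{\mu'\lambda}(q)\in\ZZ[q]$ has nonnegative coefficients, constant term $\#\ssyt(\mu',(1^n))=\K_{\mu'\lambda}>0$, and value $b_{\mu'\lambda}(1)>0$.

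Next I would pin down the degree and leading coefficient of $b_{\mu'\lambda}(q)$. From~\cref{rijT} one has $r_{ij}(T)\le\mu'_{i-1}-j+1$ for every cell $(i,j)$ with $i\ge 2$, and I would check that this bound is attained simultaneously in all cells by the \emph{row-filling} tableau $T^{\mathrm{row}}$ that places $1,\dots,\mu'_1$ in the first row, then $\mu'_1+1,\dots,\mu'_1+\mu'_2$ in the second, and so on: there $b_{i-1,j'}<b_{ij}$ precisely when $j'<\mu'_{i-1}+j$, which holds for all $j'\le\mu'_{i-1}$. The crucial point is uniqueness: if $T$ attains $r_{ij}(T)=\mu'_{i-1}-j+1$ in every cell with $i\ge 2$, then taking $j=1$ in row~$2$ shows that every entry of $T$ outside row~$1$ exceeds all of row~$1$, which forces row~$1$ to be $1,2,\dots,\mu'_1$; deleting row~$1$ and subtracting $\mu'_1$ yields a standard tableau of shape $(\mu'_2,\mu'_3,\dots)$ satisfying the same hypothesis, so induction on the number of rows gives $T=T^{\mathrm{row}}$. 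Consequently $\deg_q b_{\mu'\lambda}(q)=D:=\sum_{i\ge 2}\sum_{j=1}^{\mu'_i}(\mu'_{i-1}-j)$, attained by the single monic summand $r_q(T^{\mathrm{row}})$, so $b_{\mu'\lambda}(q)$ is monic of degree~$D$.

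It then remains to do the bookkeeping: using $\mu'_1=\ell(\mu)$, $\sum_{i\ge 1}\binom{\mu'_i}{2}=\fn(\mu)$ from~\cref{n_mu:dif}, and $\sum_{i\ge 1}\mu'_i=n$, one simplifies
\[
D=\sum_{i\ge 1}\mu'_i\mu'_{i+1}-\fn(\mu)+\binom{\ell(\mu)}{2}-n+\ell(\mu).
\]
I would then set $\R_{\mu\lambda}(q):=q^{D}\,b_{\mu'\lambda}(1/q)$, the degree-$D$ reversal of $b_{\mu'\lambda}$; by the previous paragraph $\R_{\mu\lambda}(q)\in\ZZ[q]$ with $\R_{\mu\lambda}(0)$ equal to the leading coefficient $1$ of $b_{\mu'\lambda}(q)$ and $\R_{\mu\lambda}(1)=b_{\mu'\lambda}(1)\ne 0$. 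Substituting into the displayed formula from Part~(1), the exponent of $q$ collapses to
\[
\binom{n}{2}+\ell(\mu)-\fn(\mu)-n-D=\binom{n}{2}-\binom{\ell(\mu)}{2}-\sum_{j\ge 1}\mu'_j\mu'_{j+1},
\]
which is exactly the asserted identity. The hard part is the uniqueness of the degree-maximizing standard tableau in the second step; the remaining steps are formal manipulations or a direct computation.
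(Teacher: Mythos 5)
Your proposal is correct and follows essentially the same route as the paper: reduce to Part~(1) of~\cref{mainthm:Fmula-comb} and~\cref{Ram-Schlosser}, note $s_q(T)=1$ for $\lambda=(1^n)$, identify the row-filling standard tableau as the unique degree-maximizer of $r_q(T)$ so that $b_{\mu'\lambda}(q)$ is monic of the computed degree $D$, and set $\R_{\mu\lambda}(q)=q^{D}b_{\mu'\lambda}(1/q)$. The only difference is that you spell out the uniqueness of the maximizing tableau (via the cell-wise bound $r_{ij}(T)\le\mu'_{i-1}-j+1$ and induction on rows), which the paper merely asserts; this is a welcome addition, not a divergence.
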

\begin{proof} From the first part of~\cref{mainthm:Fmula-comb} we have
\begin{equation}\label{Eq:Finproof}
\F_{\mu\lambda}(q)=(q-1)^{n-\ell(\mu)}q^{\binom{n}{2}+\ell(\mu)-\fn(\mu)-n}\,\, b_{\mu'\lambda}(1/q). 
\end{equation}
Recall that $\mathrm{SYT}(\mu')$ is the set of standard tableaux of shape $\mu'$. Then $\ssyt(\mu',1^n)=\mathrm{SYT}(\mu')$. Moreover $s_q(T)=1$ for any standard tableau $T$, and so from~\cref{Ram-Schlosser} we have 
$$b_{\mu'\lambda}(q)=\sum_{T\in\mathrm{SYT(\mu')}}r_q(T).$$
There is only one standard tableau that maximizes the degree of $r_q(T)$ which is the standard tableau obtained by inserting $\{1,2,\dots,\mu_1'\}$ into the first row of $\mu'$ and so on. For instance when $\mu'=(6,3,2)$, the tableau is 
  \begin{align*}\scriptsize
   T=\ytableaushort{123456,789\none,{10}{11} \none}.
  \end{align*}
Then
$
r_q(T)=\prod_{j\geq 2}\prod_{i=0}^{\mu'_j-1}[\mu'_{j-1}-i]_q.
$
Therefore
\begin{equation*}
\begin{split}
\deg_q(r_q(T))&=\sum_{j\geq 2} \left(\mu'_j\mu_{j-1}'-\sum_{i=1}^{\mu_j'}i\right)=\sum_{j\geq 1}\mu_j'\mu_{j+1}'-\sum_{j\geq 2}\binom{\mu_j'}{2}-\sum_{j\geq 2}\mu_j'\\
&=\sum_{j\geq 1}\mu_j'\mu_{j+1}'+\binom{\mu_1'}{2}+\mu_1'-\sum_{j\geq 1}\binom{\mu_j'}{2}-\sum_{j\geq 1}\mu_j'\\
&=\sum_{j\geq 1}\mu_j'\mu'_{j+1}+\binom{\ell(\mu)}{2}+\ell(\mu)-\fn(\mu)-n.
\end{split}
\end{equation*} 
Thus we can define
$$
\R_{\mu\lambda}(q)\defin
q^{\sum_{j\geq 1}\mu_j'\mu'_{j+1}+\binom{\ell(\mu)}{2}+\ell(\mu)-\fn(\mu)-n}b_{\mu'\lambda}(1/q)\in \mathbb{Z}[q]
$$
From this and~\cref{Eq:Finproof} we have 
$$
\F_{\mu\lambda}(q)=(q-1)^{n-\ell(\mu)}q^{\binom{n}{2}-\binom{\ell(\mu)}{2}-\sum_{j\geq 1}\mu_j'\mu_{j+1}'}\, \R_{\mu\lambda}(q).$$
Note that $b_{\mu'\lambda}(q)$ is monic and so $\R_{\mu\lambda}(0) = 1$. Moreover, $\lim_{q \to 1} [n]_q = n$, and thus $b_{\mu'\lambda}(1) \neq 0$, which confirms $\R_{\mu\lambda}(1) \neq 0$.
This proves the theorem. 
\end{proof}
For an arbitrary partition $\lambda$, unlike in the previous case, $s_q(T)$ contributes and this approach becomes complicated. For that reason, we use~\cref{Intro_them: Main_Macdonald} to prove~\cref{Into:eq_Fuchs-Kirillov} for a general partition $\lambda=(\lambda_1,\dots,\lambda_s)$. 

\subsection{Kostka polynomials}
The modified Hall-Littlewood polynomials satisfy
\begin{align}
\label{eq:tildeH=omegaP}
  \tilde{\HH}_\mu(\x;q)=
\sum_{\eta} \tilde{\K}_{\eta\mu}(q)s_\eta(\x)
  =
   \omega \left(q^{\fn(\mu)}\P_{\mu'}(\x; 1/q,0)\right),
\end{align}
where the $\tilde{\K}_{\eta\mu}(q)$ are the modified Kostka-Foulkes polynomials
(related to the Kostka polynomials by 
$\tilde{\K}_{\eta'\mu}(q):=q^{\fn(\mu)}\K_{\eta'\mu}(1/q)$),
the $s_\eta(\x)$ are the Schur symmetric functions,  
the $P_\mu(\x;q,t)$ are the (monic) Macdonald symmetric functions and $\omega$ is the $\mathbb Q(q)$-linear extension of the involution on the ring of symmetric functions with rational coefficients.
By~\cite[p. 354, Eq. (8.11)]{Macdonald}
we have 
$$
J_{\mu'}(\x;q,t)=\sum_{\eta}\K_{\eta\mu'}(q,t)S_\eta(\x;t),
$$
where the $\K_{\eta\mu'}(q,t)$ are the $q,t$-Kostka polynomials, and the $S_\eta(\x,t)$ satisfy  $S_\eta(\x,0) = s_\eta(\x)$. When $t=0$ we have $J_{\mu'}(\x; q,0)=\P_{\mu'}(\x,q,0)$ and~\cite[p.354, Eq. (8.15)]{Macdonald} implies that
$
\K_{\eta\mu'}(q,0)=\K_{\eta'\mu}(q),
$
where the $\K_{\eta\mu}(q)$ are the Kostka polynomials~\cite[Chapter III, Section 6]{Macdonald}.
 Therefore 
\begin{equation}\label{Mac_Schur}
\P_{\mu'}(\x;q,0)=\sum_{\eta}\K_{\eta'\mu}(q)s_\eta(\x). 
\end{equation}
From $\K_{\eta\lambda}=[\x^\lambda]s_\eta(x)$ we obtain 
\begin{equation}\label{PKostka}
[\x^\lambda]\P_{\mu'}(\x;q,0)=\sum_{\lambda\less\,\eta\,\less\mu'}\,\K_{\eta\lambda}\K_{\eta'\mu}(q).
\end{equation}
Note that the exponent of $q$ in Equation~\cref{eq:Fmula-q}
 is $E'$, which can easily be shown to be equal to
\begin{equation}\label{ex_modify}
    \binom{n}{2}+\fn(\mu)-\fn(\lambda')-\sum_{j\geq 1}\mu_j'\mu_{j+1}'-\binom{\ell(\mu)}{2}.
\end{equation}
From~\cref{Intro_them: Main_Macdonald} and~\cref{PKostka,ex_modify} it follows that
\begin{equation}\label{F_Kostka1:Eq}
\F_{\mu\lambda}(q)=(q-1)^{n-\ell(\mu)}q^{E''}\prod_{i\geq 1}\frac{[\lambda_i]_q!}{[\mu_i'-\mu_{i+1}']_q!}\sum_{\lambda\less\,\eta\,\less\mu'}\K_{\eta\lambda}\K_{\eta'\mu}(1/q), 
\end{equation}
where $E''\defin{\binom{n}{2}+\fn(\mu)-\fn(\lambda')-\sum_{i\geq 1}\mu'_i\mu'_{i+1}-\binom{\ell(\mu)}{2}}$. 
The following lemma is well-known and follows directly from Karamata's inequality.  
\begin{lemma}\label{n_ineq}
Let $\lambda$ and $\mu$ be two partitions of $n$ with $\lambda\less \mu$. Then $\fn(\lambda')\leq \fn(\mu')$ and the equality holds if and only if $\lambda=\mu$.
\end{lemma}
We are now ready to prove~\cref{Into:eq_Fuchs-Kirillov}.
\begin{proof}[Proof of the third part of~\cref{mainthm:Fmula-comb}]
It is known~\cite[p. 242, Eq. (6.5)]{Macdonald} that the Kostka polynomial $\K_{\eta'\mu}(q)$ is an integer monic polynomial of degree $\fn(\mu)-\fn(\eta')$ and $\K_{\eta'\mu}(1)=\K_{\eta'\mu}> 0$ when $\eta\less\mu'$. Therefore 
$$
\tilde{\K}_{\eta'\mu}(q)=q^{\fn(\eta')}\,\tilde{\R}_{\eta'\mu}(q),
$$
where $\tilde{\R}_{\eta'\mu}(q)$ is an integer polynomial of degree $\fn(\mu)-\fn(\eta')$ with $\tilde{\R}_{\eta'\mu}(0)=1$ and $\tilde{\R}_{\eta'\mu}(1)=\K_{\eta'\mu}>0$. For two distinct partitions $\lambda$ and $\eta$ with $\lambda\less \eta$, from~\cref{n_ineq}  we have $\fn(\lambda')< \fn(\eta')$. Then we can rewrite~\cref{F_Kostka1:Eq} as 
\begin{equation*}
\F_{\mu\lambda}(q)=(q-1)^{n-\ell(\mu)}
q^{\bar E}\prod_{i\geq 1}\frac{[\lambda_i]_q!}{[\mu_i'-\mu_{i+1}']_q!}\sum_{\lambda\less\, \eta\,\less\mu'} q^{\fn(\eta')-\fn(\lambda')}\K_{\eta\lambda}\tilde{\R}_{\eta'\mu}(q),
\end{equation*}
where $\bar E\defin{\binom{n}{2}-\sum_{i\geq 1}\mu'_i\mu'_{i+1}-\binom{\ell(\mu)}{2}}$. Define 
$$
\R_{\mu\lambda}(q):=\prod_{i\geq 1}\frac{[\lambda_i]_q!}{[\mu_i'-\mu_{i+1}']_q!}\sum_{\lambda\less\, \eta\,\less\mu'} q^{\fn(\eta')-\fn(\lambda')}\K_{\eta\lambda}\tilde{\R}_{\eta'\mu}(q),
$$
so that
$$
\F_{\mu\lambda}(q)=(q-1)^{n-\ell(\mu)}q^{\binom{n}{2}-\sum_{j\geq 1}\mu_j'\mu_{j+1}'-\binom{\ell(\mu)}{2}}\, \R_{\mu\lambda}(q).
$$
Obviously $\R_{\mu\lambda}(0)=1$ and $\R_{\mu\lambda}(1)> 0$ since $\tilde{\R}_{\eta'\mu}(1)>0$ and $\K_{\eta\lambda}>0$ for each $\lambda\less \eta\less\mu'$. 
Thus $\R_{\mu\lambda}(q)$ is a rational function without a pole at $0$ or $1$. Since $\F_{\mu\lambda}(q) \in \ZZ[q]$, we obtain
$\R_{\mu\lambda}(q)\in \ZZ[q]$.
\end{proof}


\section{The modular law and the proofs of~\cref{Hessenberg_non-zero,Hess_theorem,thm:nilphess}
}
This section is devoted to the proofs of our results on orbital varieties of ad-nilpotent ideals, and their application to nilpotent Hessenberg varieties. 
\subsection{
Proof of~\cref{Hess_theorem}:  modular law and ad-nilpotent ideals}
\label{subsec:modular}
Throughout this section, for a given Hessenberg function $\hes$, we always set $\hes(0)=0$ by convention. We begin by recalling the modular law~\cite{Abreu-Nigro}. 
\begin{definition}\label{triple_cond} Let $\hes_0, \hes_1, \hes_2 \in \mathcal{H}_n$ be Hessenberg functions. We say $(\hes_0,\hes_1,\hes_2)$ is a compatible sequence whenever one of the following two conditions holds:

\begin{description}
\item[Case I] There exists $i \in [n-1]$ such that 
\begin{equation}\label{caseI_h}
\hes_1(i-1) < \hes_1(i) < \hes_1(i+1)\quad \text{and}\quad \hes_1(\hes_1(i)) = \hes_1(\hes_1(i)+1).
\end{equation}
Moreover $\hes_0$ and $\hes_2$ are obtained from $\hes_1$ via:
\begin{equation}\label{caseI_modular}
\hes_0(l)= 
\begin{cases}
\hes_1(l) &  l \neq i, \\
\hes_1(i) - 1 &  l = i,
\end{cases} \qquad
\hes_2(l) = 
\begin{cases}
\hes_1(l) & l \neq i, \\
\hes_1(i) + 1 & l = i.
\end{cases}
\end{equation}
\medskip

\item[Case II] There exists $i \in [n-1]$ such that 
\begin{equation}\label{caseII_h}
\hes_1(i+1) = \hes_1(i)+1\quad \text{and}\quad \hes_1^{-1}(i) = \emptyset.
\end{equation}
Moreover $\hes_0$ and $\hes_2$ are obtained from $\hes_1$ via:
\begin{equation}\label{caseII_modular}
\hes_0(l) = 
\begin{cases}
\hes_1(l) &  l \neq i+1, \\
\hes_1(i) &  l = i+1,
\end{cases} \qquad
\hes_2(l) = 
\begin{cases}
\hes_1(l) & l \neq i, \\
\hes_1(i)+1 &  l= i.
\end{cases}
\end{equation}
\end{description}
\end{definition}

\begin{remark}
In the first case, the condition $\hes_1(i-1) < \hes_1(i)$ guarantees that $\hes_0 \in \mathcal{H}_n$. In particular, when $i=1$, from~\cref{caseI_h}, we deduce that $\hes_1(1) > 1$. Moreover in both cases $\hes_1(i)\leq n-1$.
\end{remark}
\begin{example} The function $\hes_1=(2,3,4,6,6,6)$ for $i=3$ satisfies the first case. Here we have $\hes_0=(2,3,3,6,6,6)$ and $\hes_2=(2,3,5,6,6,6)$. Also the function $\hes_1=(1,2,4,5,6,7,7)$ for $i=3$ satisfies the second  case. For this function we obtain $\hes_0=(1,2,4,4,6,7,7)$ and $\hes_2=(1,2,5,5,6,7,7)$
\end{example}
\begin{definition}[$q$-modular law] Let $f:\mathcal{H}_n\to \RR$ be a real-valued function  and let $q\in\RR$. We say that $f$ satisfies the $q$-modular law if for any compatible sequence $(\hes_0,\hes_1,\hes_2)$ of Hessenberg functions in $\mathcal{H}_n$ we have 
\begin{equation}\label{q-modular}
    (q+1)f(\hes_1)=qf(\hes_0)+f(\hes_2).
\end{equation}    
\end{definition}
\begin{remark} The definition of the modular law in~\cite{Abreu-Nigro} is slightly different from what we give here. There, the requirement on  $f: \mathcal{H}_n \to V$ is  that $V$ is a $\mathbb{Q}(q)$-vector space for an indeterminate $q$. Here,  we consider $q$ to be a number.
However, the main results of~\cite{Abreu-Nigro} (in particular~\cite[Theorem 1.2]{Abreu-Nigro} and~\cite[Algorithm 2.8]{Abreu-Nigro}) are still valid for all but finitely many choices of $q$ (more precisely, when $q$ is not a root of finitely many polynomials).
\end{remark}
Now let $\mu$ be a partition of $n$. Our next goal is to show that the linear function  
\begin{equation}\label{f_mu_modular}
    f_\mu: \mathcal{H}_n\to \RR,\qquad \hes\mapsto q^{|h|}\F_{\mu\hes}(q),
\end{equation}
where $|\hes|=\sum \hes(l)$, 
satisfies the $q$-modular law. Let $(\hes_0,\hes_1,\hes_2)$ be a compatible sequence. Note that $|\hes_0|=|\hes_1|-1$ and $|\hes_2|=|\hes_1|+1$. Thus~\cref{f_mu_modular} would follow from 
\begin{equation}\label{F_mu_modular}
    (q+1)\F_{\mu\hes_1}(q)=\F_{\mu\hes_0}(q)+q\F_{\mu\hes_2}(q).
\end{equation}
The proof of~\cref{F_mu_modular} is given in~\cref{lem:modularl}.  To begin, we recall basic properties of elementary row and column operations. For $x\in \FF_q$ and $1\leq s,t\leq n$, denote $\EE_{st}(x)=I+\E_{st}(x)$, where  $\E_{st}(x)$ is the elementary $n\times n$ matrix with $x$ in the $(s,t)$-entry  and zeros in other entries. Given an $n \times n$ matrix, we immediately obtain:
\begin{equation}
\label{elementary_oper}
\mathrm{Row}_l(\EE_{st}(x)A) = \begin{cases} \mathrm{Row}_l(A) & l \neq s; \\ x \,\mathrm{Row}_t(A) + \mathrm{Row}_s(A) & l = s. \end{cases}
\end{equation}
and \[
\mathrm{Col}_l(A\EE_{st}(x)) = \begin{cases} \mathrm{Col}_l(A) & l \neq t; \\ x \,\mathrm{Col}_s(A) + \mathrm{Col}_t(A) & l = t. \end{cases}
\]
In the following, we use $\mathrm{Per}(s,t)$ to denote 
the $n\times n$ permutation matrix associated with the transposition $(s\,\, t)\in \mathrm{S}_n$.

\begin{lemma}
\label{lem:modularl}
 The function $f_\mu: \mathcal{H}_n\to \mathbb{R}$ defined in~\cref{f_mu_modular} satisfies the $q$-modular law. 
\end{lemma}
\begin{proof}
We consider two cases according to~\cref{triple_cond}. 
\medskip

{\noindent \bf Case I:} Set $j=\hes_1(i)$. Define 
$$
\mathcal{A}_0=\{(a_{st})\in \u_{\hes_0}(\FF_q)\cap \mathcal{C_\mu}: a_{ij}\neq 0\}\qquad
\mathcal{A}_1=\{(a_{st})\in \u_{\hes_1}(\FF_q)\cap \mathcal{C_\mu}: a_{i(j+1)}\neq 0\}.
$$
One observes that
$$
\F_{\mu\hes_0}(q)=\F_{\mu\hes_1}(q)+|\mathcal{A}_0|\qquad
\F_{\mu\hes_1}(q)=\F_{\mu\hes_2}(q)+|\mathcal{A}_1|. 
$$
Thus~\cref{F_mu_modular} follows from $q|\mathcal{A}_1|=|\mathcal{A}_0|$. To prove the latter equality, consider 
$\varphi: \mathcal{A}_0\to \mathcal{A}_1$ defined by the assignment
$$
A\mapsto \mathrm{Per}(j,j+1)\,\EE_{j(j+1)}\left(\frac{a_{i(j+1)}}{a_{ij}}\right)\,A\,\EE_{j(j+1)}\left(-\frac{a_{i(j+1)}}{a_{ij}}\right)\,\mathrm{Per}(j,j+1).
$$
We will show that $\varphi(A)\in\mathcal{A}_1$ with $A_{ij}=\varphi(A)_{i(j+1)}$. 
It follows from~\cref{elementary_oper} that the $(i,j+1)$ entry of $A\,\EE_{j(j+1)}\left(-{a_{i(j+1)}}/{a_{ij}}\right)$ is zero. The assumptions $\hes_1(i)<\hes_1(i+1)$ and $\hes_1(\hes_1(i))=\hes_1(\hes_1(i)+1)$ imply that $i+1\leq j$. From~\cref{elementary_oper} we deduce  $A_1=\EE_{j(j+1)}\left({a_{i(j+1)}}/{a_{ij}}\right)\,A\,\EE_{j(j+1)}\left(-{a_{i(j+1)}}/{a_{ij}}\right)$ belongs to $\mathcal{A}_0$ and its $(i,j+1)$ entry is zero. Since $\hes_1(i)<\hes_1(i+1)$ we have $a_{(i+1)(j+1)}=0$. Then $A_1\, \mathrm{Per}(j,j+1)$, which swaps the $j$th and $(j+1)$th column, belongs to $\u_{\hes_1}$ with a non-zero $(i,j+1)$ entry. Thus, the matrix $\mathrm{Per}(j,j+1)\, A_1\, \mathrm{Per}(j,j+1)$ belongs to $\u_{\hes_1}$ with a non-zero $(i,j+1)$ entry, because $i<j$. Since the Jordan form of $A_1$ is $J_\mu$, we conclude that $\varphi(A) = \mathrm{Per}(j,j+1)\, A_1\, \mathrm{Per}(j,j+1) \in \mathcal{A}_1$.

\smallskip

We now show surjectivity of $\varphi$. If $B\in \mathcal{A}_1$ with $b=b_{i(j+1)}\neq 0$, then one can check that for any $x\in\FF_q$ 
\begin{equation}\label{fiber_phi}
\EE_{j(j+1)}(-x/b)\,\mathrm{Per}(j,j+1)\,B\,\mathrm{Per}(j,j+1)\,\EE_{j(j+1)}(x/b)\in \mathcal{A}_0,
\end{equation}
with $(i,j)$ entry $b$ and $(i,j+1)$ entry $x$. This matrix maps to $B$, so $\varphi$ is surjective. Moreover, the matrices of the form~\cref{fiber_phi} are all distinct. This follows from the identity $$\mathrm{Per}(j,j+1)\,\EE_{j(j+1)}(x)\,\mathrm{Per}(j,j+1) = \EE_{(j+1)j}(x)$$
and the fact that $\EE_{(j+1)j}(x) B\, \EE_{(j+1)j}(-x) = B$, where $B \in \mathcal{A}_1$, implies $x = 0$. Thus the fiber of each element has cardinality $q$, and so $q|\mathcal{A}_1|=|\mathcal{A}_0|$.

\medskip

{\noindent \bf Case II:} This case is similar to the previous case. Set $j=\hes_1(i)$. Define 
$$
\mathcal{B}_0=\{(b_{st})\in \u_{h_0}\cap\mathcal{C}_\mu: b_{(i+1)(j+1)}\neq 0\},\qquad \mathcal{B}_1=\{(b_{st})\in \u_{h_1}\cap\mathcal{C}_\mu: b_{i(j+1)}\neq 0\}.
$$
Then we have 
$$
\F_{\mu\hes_0}(q)=\F_{\mu\hes_1}(q)+|\mathcal{B}_0|\qquad
\F_{\mu\hes_1}(q)=\F_{\mu\hes_2}(q)+|\mathcal{B}_1|. 
$$
This implies that~\cref{F_mu_modular} follows from $q|\mathcal{B}_1|=|\mathcal{B}_0|$. To prove this we now define
the map 
$\psi: \mathcal{B}_0\to \mathcal{B}_1
$
by the assignment  
$$
B\mapsto \mathrm{Per}(i,i+1)\,\EE_{i(i+1)}\left(\frac{-b_{i(j+1)}}{b_{(i+1)(j+1)}}\right)\,B\,\EE_{i(i+1)}\left(-\frac{b_{i(j+1)}}{b_{(i+1)(j+1)}}\right)\,\mathrm{Per}(i,i+1).
$$
From~\cref{caseII_h} we have $j>i$ which implies $\psi(B)\in \mathcal{B}_1$ and a similar argument as above shows the fiber of each element has cardinality $q$. 
\end{proof}
\begin{proposition}\label{F_mu_rational} Let $\hes\in\mathcal{H}_n$ for some $n\geq 1$. Then  $\F_{\mu\hes}(q)$ is a rational function of $q$.    
\end{proposition}
\begin{proof}
This follows from~\cite[Theorem 1.2, Algorithm 2.8]{Abreu-Nigro}, which also holds for real-valued functions that satisfy the $q$-modular law for generic $q$. Indeed~\cite[Algorithm 2.8]{Abreu-Nigro} expresses $\F_{\mu\hes}(q)$ as a linear combination of the $\F_{\mu\lambda}(q)$ with coefficients which are rational functions of $q$. 
But by~\cref{mainthm:Fmula-comb} the $\F_{\mu\lambda}(q)$ are polynomials in $q$.
\end{proof}

We recall~\cite[Cor.~3.2]{Abreu-Nigro}: if $f:\mathcal{H}_n\to\QQ(q)$ satisfies the $q$-modular law (with $q$ variable), then
\begin{equation}\label{Abreu-Nigro_cor}
    f(\hes)=\sum_{\lambda\, \vdash n} \frac{c_{\lambda\hes}(q)}{[\lambda]_q!}f(k_\lambda),
\end{equation} 
where $k_\lambda=k_{\lambda_1}\sqcup\cdots\sqcup k_{\lambda_l}$ with $l=\ell(\lambda)$. By applying~\cref{F_mu_rational,Abreu-Nigro_cor} we deduce 
\begin{equation}\label{fmuh:eq}
    \F_{\mu\hes}(q)=\frac{1}{q^{|\hes|}}\sum_{\lambda\vdash n}\frac{c_{\lambda\hes}(q)}{[\lambda]_q!}q^{|k_\lambda|}\F_{\mu\lambda}(q).
\end{equation}
One can show 
$$|k_\lambda|=(n^2+n)/2+\fn(\lambda')=n^2-\binom{n}{2}+\fn(\lambda')=n^2-\dim\u_\lambda.$$
This combined with~\cref{fmuh:eq} proves the the first equation of~\cref{Hess_theorem}. The rest of the theorem follows from~\cref{F_mulambda_G:thm}. 
Finally, to show that $\F_{\mu\hes}(q) \in \ZZ[q]$ note that we have $\F_{\mu\hes}(q) = \frac{g(q)}{h(q)}$ where $g(q), h(q) \in \ZZ[q]$ and $h(q)$ is monic. By long division  $f(q)=f_1(q)+(r(q)/h(q))$ for $f_1(q),r(q)\in\ZZ[q]$ satisfying $\deg(r(q))<\deg(h(q))$. Since $f(q)\in\ZZ$ for every prime $q$, it follows that $r(q)=0$ for sufficiently large primes $q$, hence $r(q)=0$.

\subsection{Proof of~\cref{Hessenberg_non-zero}: Non-emptiness of $\mathcal{C}_\mu\cap \u_\hes$} In this section $\FF$ denotes an arbitrary field,
$\mathcal C_\mu$ is the $\GL_n(\FF)$-conjugacy class of $J_\mu$, and $\u_\hes=\u_\hes(\FF)$.
  Recall that
$[n]:= \{ 1, \dots,n \}$. For positive integers $m$ and $n$, set $[ m:n ]=\{ m , \dots,n \}$. If $m>n$, we define $[m:n] =\emptyset$.  For $i<j$, we write $\E_{ij}$ for the elementary $n\times n$ matrix whose only nonzero entry is a $1$ in the $i$-th row and the $j$-th column.  If we denote the standard basis vectors of $\FF^n$ by $e_1, \dots, e_n$, then 
\begin{equation}\label{Elementary-matrix-action}
\E_{ij}(e_\ell)=
\begin{cases}
e_i & \ell=j;\\
0 & \text{otherwise.} 
\end{cases}
\end{equation}
If $J= \{ i_1, \dots, i_r \}
\subseteq \{1,\cdots, n\}$ with $i_1< \cdots < i_r$, we write $\E_J:=\sum_{ l =1}^{r-1} \E_{ i_l i_{l+1} }$. We set $\E_J=0$ when $J$ has only one element. Then from~\cref{Elementary-matrix-action} we have 
\[ \E_J ( e_{i_1})=0,\quad \E_J( e_{i_2})= e_{i_1},\quad  \dots\quad  \E_J( e_{i_r})= e_{i_{r-1}}. \]
From this we observe that $\E_J$ has the Jordan canonical form of type $(r, 1, \dots, 1)$. Similarly, if $\mathcal{J}= \{ J_1, \dots, J_k \}$ is a partition of $[n]$  with $|J_i|=\mu_i$, then the Jordan canonical form of $\E_{\mathcal{J}}:=\sum_{ i=1}^k \E_{J_i}$ is $\mu=(\mu_1, \dots, \mu_k)$.  
\smallskip

Now let $\hes \in \mathcal{H}_n$ be a Hessenberg function. We recall the definition of the partial order $\mathcal{P}_\hes$ on $[n]$ given by $i \prec_\hes j$ if and only if $\hes(i) < j$. Let $\lambda_\hes$ be the Greene-Kleitman shape of $\mathcal{P}_\hes$. 
A simple algorithm for determining $\lambda_{\hes}$  essentially goes back to an early influential paper of Gerstenhaber~\cite[p. 535]{MR136683} (also see Fenn and Sommers~\cite[Proposition~6.8]{MR4334164}). We now describe Gerstenhaber's algorithm.

    Given $\hes:[n]\to [n]$ define a sequence $I_1,I_2,\ldots$ of disjoint subsets of $[n]$, called \emph{characteristic sequences} as follows. The first element of $I_1$ is 1. For each $i\in I_1$,  the next element in $I_1$ is the least positive integer $j$ such that $i\prec_\hes j$ if such a $j$ exists, otherwise $i$ is the last element of $I_1$. Once $I_1,\ldots,I_{r-1}$ have been constructed, we declare the smallest element of $I_r$ to be the least positive integer in $[n]\setminus \bigcup_{i=1}^{r-1}I_i$. Successive elements in $I_r$ are defined as before: for each $i\in I_r$,  the next element in $I_r$ is the least positive integer $j\in [n]\setminus \bigcup_{i=1}^{r-1}I_i$ such that $i\prec_\hes j$ if such a $j$ exists, otherwise $i$ is the last element of $I_r$. It is clear that we must have $[n]=\bigcup_{i=1}^{s}I_i$ for some integer $s$. If $\lambda_i=|I_i|$, then the Greene-Kleitman shape of $\mathcal{P}_{\hes}$ is given by $\lambda_\hes:=(\lambda_1,\ldots,\lambda_s)$.
  \begin{example}
For the Hessenberg function $\hes=(3,4,6,7,8,8,8,8)$, Gerstenhaber's algorithm yields the characteristic sequences $I_1=\{1,4,8\}$, $I_2=\{2,5\}$, $I_3=\{3,7\}$, $I_4=\{6\}$. Therefore $\lambda_\hes=(3,2,2,1)$.
   \end{example}

\begin{proposition}\label{decomp_poset} Let $\mu\less \lambda=\lambda_\hes$ be a partition of $n$. Then $[n]$ can be partitioned into $\ell(\mu)$ chains in $\mathcal{P}_\hes$ whose cardinalities are the parts of $\mu$.
\end{proposition}
Before giving the proof of~\cref{decomp_poset}, we note that 
for a given partition $\mu\less\lambda_\hes$, by applying~\cref{decomp_poset} we obtain a family of disjoint subsets $\mathcal{J}=\left\{J_1,\dots,J_{\ell(\mu)}\right\}$ of $[n]$, where each $J_i$ is a chain with respect to $\preceq_\hes$ and $|J_i|=\mu_i$. Thus $\E_\mathcal{J}\in \C_\mu\cap\u_\hes$ which proves one direction of~\cref{Hessenberg_non-zero}. We require the following lemma to prove~\cref{decomp_poset} . 

\begin{lemma}\label{lem:exchange}
  If $C_1,C_2$ are disjoint chains in $\mathcal{P}_\hes$ with $|C_1|> |C_2|+1$, then there exist disjoint chains $C_1'$ and $C_2'$ in $\mathcal{P}_\hes$ such that $C_1\cup C_2=C_1'\cup C_2'$ with $|C_1'|=|C_1|-1$ and $|C_2'|=|C_2|+1$.
\end{lemma}
\begin{proof}
It is known~\cite[Proposition 4.1]{Shareshian-Wachs} that $\mathcal{P}_\hes$ is isomorphic to a natural unit interval order (a collection of $n$ closed unit intervals on the real line with intervals $U_1<U_2$ precisely when $U_1$ is completely to the left of $U_2$). Therefore we may assume that $C_1=\{U_1< U_2< \cdots <U_\ell\}$ and $C_2=\{V_1<V_2< \cdots<V_k\}$ for unit intervals $U_i$ and $V_j$ on the real line in the Euclidean plane with $1\leq i\leq \ell, 1\leq j\leq k$.
The proof is by induction on $k=|C_2|$. The cases $k=0,1$ are straightforward. Suppose the lemma holds whenever $|C_2|<k$. Now consider the case $|C_2|=k$.  If some interval $U$ in $C_1$ does not intersect any interval in $C_2$, then $C_1'=C_1-\{U\}$ and $C_2'=C_2\cup\{U\}$ have the desired property.

Now suppose no interval can be moved from $C_1$ to $C_2$. For $1\leq i\leq \ell-1$ let $L_i$ be a line perpendicular to the $x$-axis which intersects the real line strictly between $U_{i}$ and $U_{i+1}$. Since each interval in $C_2$ intersects at most one such line, it can be seen that some line $L_j$ does not intersect any interval in $C_2$. For $m=1,2$, write $C_m=C_m^L\cup C_m^R$ where $C_m^L$ denotes all intervals in $C_m$ to the left of $L_j$ and $C_m^R$ denotes all intervals in $C_m$ to the right of $L_j$. By the movability assumption, both $|C_2^L|$ and $|C_2^R|$ are nonempty. Suppose either $|C_1^L|> |C_2^L|+1$ or $|C_1^R|> |C_2^R|+1$. Since both $|C_2^L|$ and $|C_2^R|$ are less than $k$, we can use the inductive hypothesis to construct $C_1'$ and $C_2'$ as required. If not, then we must have $|C_1^L|= |C_2^L|+1$ and $|C_1^R|= |C_2^R|+1$. In this case it is clear that $C_1'=C_2^L\cup C_1^R$ and $C_2'=C_1^L\cup C_2^R$ have the desired property. This completes the inductive step and the proof.
\end{proof}
\begin{remark}
  The above lemma fails for arbitrary partial orders. For instance, if $P$ is isomorphic to the direct sum of a 3-element chain and a 1-element chain. 
\end{remark}

\begin{proof}[Proof of~\cref{decomp_poset}] 
  Write $\lambda=\lambda_\hes$ and suppose $C=\{C_i\}_{1\leq i\leq k}$ is a disjoint chain cover of $\mathcal{P}_\hes$ with $|C_i|=\lambda_i$ for $1\leq i\leq k$ (such a cover exists by Gerstenhaber's algorithm). Now suppose $\mu \trianglelefteq \lambda$ is a partition.  Then there is a sequence $\mu=\nu^0\trianglelefteq \nu^1\trianglelefteq \cdots \trianglelefteq \nu^m=\lambda_\hes$ of partitions such that $\nu^{i-1}$ is covered by $\nu^{i}$ in the dominance order for $1\leq i\leq m$~\cite[p. 9]{Macdonald}. Therefore, for each $i$, the partition $\nu^{i-1}$ can be obtained from $\nu^i$ by moving a single cell in some row of the Young diagram of $\nu^i$ to a lower row. If there exists a chain cover for $\mathcal{P}_\hes$ where the chains have cardinalities given by the parts of $\nu^{i}$, then by Lemma \ref{lem:exchange} we can construct another chain cover for $\mathcal{P}_\hes$ where the chains have cardinalities given by the parts of $\nu^{i-1}$. This proves~\cref{decomp_poset}. 
\end{proof}

We now prove the other direction of~\cref{Hessenberg_non-zero}. Let $\mathcal{P}$ be a finite poset with $n$ elements. Fix a linear extension of $\mathcal{P}$ which identifies its elements with $[n]$ (thus if $i\preceq_Pj$ then $i\leq j$ as positive integers). The incidence algebra $I(\mathcal{P},\FF)$ consists of all $n\times n$ matrices $A=(a_{ij})$ over $\FF$ such that $a_{ij}=0$ unless $i\preceq_\mathcal{P} j$. Note that $I(\mathcal{P},\FF)$ is an algebra of upper triangular matrices. 
A matrix $A\in I(\mathcal{P},\FF)$ is nilpotent precisely when its diagonal entries are all zero. Write ${\rm Nil}(\mathcal{P},\FF)$ for the collection of all nilpotent matrices in $I(\mathcal{P},\FF)$. Let $\delta_k(A)$ denote the greatest common divisor of all $k\times k$ minors of $xI-A$. Given a matrix $A\in {\rm Nil}(\mathcal{P},\FF)$, suppose $\delta_k(A)=x^{d_k}$ for $0\leq k\leq n$ (by convention $\delta_0(A)=1$, so $d_0=0$)~\cite[Lemma 6.2]{MR1814900}. Note that $d_n=n$. Then $A$ is similar to $J_\lambda$ where $\lambda=(\lambda_1,\lambda_2,\ldots)$ with $\lambda_i=d_{n+1-i}-d_{n-i}$ for $1\leq i\leq n$. Therefore $\lambda_1+\cdots+\lambda_r=n-d_{n-r}$ for $r\geq 1$. 
\begin{proposition}\label{thm:dominated}
 If ${\rm Nil}(\mathcal{P},\FF)\cap \C_\mu\neq \emptyset$, then $\mu\less \lambda_\mathcal{P}$. In particular, if $ \mathcal{C}_\mu\cap\u_\hes  \neq \emptyset$ then $\mu \less \lambda_\hes$.
\end{proposition}
\begin{proof}
  Let $x_{ij} \,(i,j\geq 1)$ be independent indeterminates over the field $\FF$. Consider the generic $n\times n$ matrix $G=(g_{ij})$ where
  $$
  g_{ij}=
  \begin{cases}
    x_{ij} & \mbox{if } i\prec_\mathcal{P} j,\\
    0 & \mbox{otherwise.}
  \end{cases}
  $$ 
By Britz and Fomin \cite[Theorem 6.1]{MR1814900} the matrix $G$ is similar to $J_\lambda$ where $\lambda=\lambda_\mathcal{P}$ (the proof in \cite{MR1814900} is given for the complex field but carries over verbatim for any field $\FF$). Suppose $\delta_i(G)=x^{g_i}$ for $i\geq 1$. Let $A\in {\rm Nil}(\mathcal{P},\FF)\cap \C_\mu$ and suppose $\delta_i(A)=x^{a_i}$ for $1\leq i\leq n$. Since $A$ can be obtained from $G$ by specializing some entries to elements of $\F$, it follows that $\delta_i(G)=x^{g_i}$ divides $\delta_i(A)$ for each $1\leq i\leq n$. In other words, $g_i\leq a_i$ for $1\leq i\leq n$. From $A\in \C_\mu$, we deduce $\mu_1+\cdots+\mu_j=n-a_{n-j}\leq n-g_{n-i}=\lambda_1+\dots+\lambda_j$ which shows $\mu\less \lambda=\lambda_{\mathcal{P}}$. 
\end{proof}

\subsection{Proof of~\cref{cor:n-ell}}

It is proved in~\cite[Theorem 6.3]{Shareshian-Wachs} that 
\begin{equation}\label{Schur_positive}
    X_{\graph(\hes)}(\x; q) = \sum_{\lambda \vdash n} g_{\lambda \hes}(q) s_\lambda(\x),\qquad g_{\lambda\hes}\in\ZZ_{\geq 0}[q].
\end{equation}
From~\cref{Hess_theorem} and~\cref{Schur_positive} we obtain
 $$
 \F_{\mu\hes}(q)=\frac{q^{n^2-|\hes|} (q-1)^n}{|\cen(I+J_\mu)|}\sum_{\lambda \vdash n}\tilde{\K}_{\lambda\mu}(q)\, g_{\lambda \hes}(q),
 $$   
 where $\tilde{\K}_{\lambda\mu}(q)\defin q^{\fn(\mu)}\K_{\lambda\mu}(1/q)$ is the modified Kostka polynomial. It is known from~\cite[p.~242, Eq.~(6.5)]{Macdonald} that the modified Kostka polynomial $\tilde{\K}_{\lambda\mu}(q)$ has nonnegative integer coefficients. Since $\mu \less \lambda_\hes$, we have $\F_{\mu\hes}(q) \neq 0$, and so, using the positivity of $g_{\lambda\hes}(q)$, equation~\cref{center_size2}, and~\cref{mainthm:Fmula-comb}, we conclude that the exact power of $q - 1$ in $\F_{\mu\hes}(q)$ is $n - \ell(\mu)$.

\subsection{Proof of~\cref{thm:nilphess}: nilpotent Hessenberg varieties}
 Let $\B$ be the standard upper triangular Borel subgroup of $\G = \GL_n(\FF_q)$. For $\hes\in \mathcal{H}_n$ and $X$ a nilpotent $n\times n$ matrix  with Jordan form $J_\mu$ we define 
$$
\mathcal{B}_\mu^\hes\defin \left\{g\B \in \G/\B: g^{-1}(I+X)g\in \U_\hes\right\}.
$$
Note that $\U_\hes$ is a normal subgroup of $\B$. One can easily see that
$$
\#\left\{g: g^{-1}(I+X)g\in \U_\hes\right\}=\mid \B\mid\cdot\mid\mathcal{B}_{\mu}^\hes\mid .
$$
Then from~\cref{ind-form,ind-form-char} 
and~\cref{Hess_theorem}
we deduce 
\begin{align*}
  \#\text{$\mathcal{H}$ess$_{nil}$}(\hes,X)&=|\mathcal{B}_{\mu}^\hes|=\frac{|Z_G(I+J_\mu)|}{|\B|}\F_{\mu\hes}(q)
  =q^{n^2-|\hes|-\binom{n}{2}}\langle \tilde{\HH}_\mu(\x;q),X_{\graph(\hes)}(\x;q)\rangle.
\end{align*}
 The result now follows from the fact that $E_\hes=\sum_i(\hes(i)-i).$

\section{Proof of~\cref{Hess_theorem_tableaux}}
Our proof of~\cref{Hess_theorem_tableaux} uses some results from~\cite{CarlssonMellit}, which we learned about from~\cite{BasuBhattacharya}. Throughout this section we denote $\P_\mu(\x;q,0)$ by $\W_{\mu}(\x;q)$ known as $q$-Whittaker function. Given a Hessenberg function $\hes:[n]\to [n]$, let $\boldsymbol\chi_\hes(\x;q)$ be  the associated \emph{unicellular LLT symmetric function}, defined by
\[
\boldsymbol{\chi}_\hes(\x;q)\defin\sum_{w\in\ZZ_{\geq 1}^n}
q^{\mathrm{inv}(\hes,w)}\prod_{i=1}^nx_{w_i},
\]
where for $w=(w_1,\ldots,w_n)\in \ZZ_{\geq 1}^n$ we have
\[
\mathrm{inv}(\hes,w)\defin
\#\{(i,j)\,:\,i<j\leq \hes(i)\text{ and }w_i>w_j\}.
\]
The $\boldsymbol\chi_\hes(\x;q)$ are symmetric functions (see for example~\cite[Proposition 3.2]{CarlssonMellit}). By~\cite[Proposition 3.5]{CarlssonMellit}, they are related to
the chromatic quasisymmetric functions by a plethystic substitution, i.e., 
\begin{equation}
\label{eq:boldchipleth}
\boldsymbol\chi_\hes(\x;q)=(q-1)^nX_{\graph(\hes)}\left[\frac{\x}{q-1};q\right].
\end{equation}
As verified in~\cite[Proposition 5.1]{BasuBhattacharya}, from~\cite[Theorem 4.1]{GriffinMellitetal} it follows that
\[
\boldsymbol\chi_\hes(\x;q)=
\sum_{\mu\vdash n}
(1-q)^{n-\mu_1}\underline{\F}_{\mu \hes}(q)
\W_{\mu}(\x;q).
\] 
Thus, from~\cref{eq:boldchipleth} we obtain
\begin{align}
\label{eq:trickysign}
X_{\graph(\hes)}(\x;q)&
=(q-1)^{-n}\boldsymbol\chi_\hes[(q-1)\x;q]
=(-1)^n
\sum_{\mu\vdash n}
(1-q)^{-\mu_1}\underline{\F}_{\mu\hes}(q)
\W_{\mu}[(q-1)\x;q].
\end{align}
We now compute the dual of $\W_\mu[(q-1)\x]$ with respect to the Hall inner product.
\begin{lemma}\label{dual of whittaker} The dual of $\W_\mu[(q-1)\x;q]=\P_\mu[(q-1)\x;q,0]$ with respect to the Hall inner product is 
$$
\frac{(-1)^n(1-q)^{-\mu_1}}{\prod_{j\geq 1}[\mu_j-\mu_{j+1}]_q!}q^{\fn(\mu')}\tilde{\HH}_{\mu'}(\x;1/q).
$$
\end{lemma}
\begin{proof} We recall two basic facts about plethystic substitution (see~\cite[Section 2]{Haiman}). 
If $f$ is homogeneous of degree $n$, then $f[-\x] = (-1)^n \omega(f(\x))$. For symmetric functions $f,g \in \mathbb{Q}(q,t)$,
$$
\langle f(\x), g(\x) \rangle_{q,t}
=
\left\langle f[\x], g\left[\frac{1-q}{1-t}\x\right] \right\rangle.
$$
These imply
\begin{equation*}
\begin{split}
\delta_{\lambda\mu}&=\langle\Q_{\lambda}(\x;q,0),\P_\mu(\x;q,0)\rangle_{q,0}=\langle \Q[\x;q,0], \P_\mu[(1-q)\x;q,0]\rangle\\
&=\langle \Q[-\x;q,0], \P_\mu[(q-1)\x;q,0]\rangle
\end{split}
\end{equation*}
Thus the dual of $\W_\mu[(q-1)\x;q]$ is $\Q_\mu[-\x;q,0]$. From~\cref{QPrelation,bmu0-rel,eq:tildeH=omegaP} we obtain:
\begin{equation*}
\begin{split}
\Q_\mu[-\x;q,0]&=\frac{(1-q)^{-\mu_1}}{\prod_{j\geq 1}[\mu_j-\mu_{j+1}]_q!}\P_\mu[-x;q,0]=\frac{(-1)^n(1-q)^{-\mu_1}}{\prod_{j\geq 1}[\mu_j-\mu_{j+1}]_q!}\omega(\P_\mu(\x;q,0))\\[0.2cm]
&=\frac{(-1)^n(1-q)^{-\mu_1}}{\prod_{j\geq 1}[\mu_j-\mu_{j+1}]_q!}q^{\fn(\mu')}\tilde{\HH}_{\mu'}(\x;1/q)
\qedhere\end{split}
\end{equation*}
\end{proof}
Now we prove~\cref{Hess_theorem_tableaux} using the following fact~\cite[Proposition 2.6]{Shareshian-Wachs}:
\begin{equation}\label{eq:XG}
X_{\graph(\hes)}(\x;q)=q^{E_\hes}X_{\graph(\hes)}(\x;1/q)=q^{|\hes|-\binom{n}{2}-n}X_{\graph(\hes)}(\x;1/q).
\end{equation}
\begin{proof}[Proof of~\cref{Hess_theorem_tableaux}] By applying the orthogonality relation in~\cref{dual of whittaker} together with~\cref{eq:trickysign,eq:XG}, we obtain:
\begin{equation}
\begin{split}
\underline{\F}_{\mu\hes}(q)&=\frac{q^{\fn(\mu')}}{\prod_{j\geq 1}[\mu_j-\mu_{j+1}]_q!}\left\langle X_{\graph(\hes)}(\x;q),\tilde{\HH}_{\mu'}(\x;1/q)\right\rangle\\
&=\frac{q^{\fn(\mu')+|\hes|-\binom{n}{2}-n}}{\prod_{j\geq 1}[\mu_j-\mu_{j+1}]_q!}\left\langle X_{\graph(\hes)}(\x;1/q),\tilde{\HH}_{\mu'}(\x;1/q)\right\rangle.
\end{split}
\end{equation}
Thus, using the identity $[n]_{1/q}! = q^{-\binom{n}{2}} [n]_q!$, we deduce
\begin{equation}\label{XH_innerproduct}
\left\langle X_{\graph(\hes)}(\x;q),\tilde{\HH}_{\mu}(\x;q)\right\rangle=\frac{q^{-\sum_{j\geq 1}\binom{\mu_j'-\mu'_{j+1}}{2}}\prod_{j\geq 1}[\mu_j'-\mu_{j+1}']_q!}{q^{-\fn(\mu)-|\hes|+\binom{n}{2}+n}}\underline{\F}_{\mu'\hes}\left(\frac{1}{q}\right)
\end{equation}
Now~\cref{Hess_theorem_tableaux} follows from~\cref{Hess_theorem,center_size2,XH_innerproduct}.
\end{proof}

\section{Counting solutions of $X^2=0$ in $\u_\lambda(\FF_q)$: proofs of~\cref{eq: Kirillov-general,anklambda-explicit,Ekhad_thm}
}
\label{matrix2=0:sec}
Let $\Lambda$ be a composition of $n$ and let $\lambda=\mathrm{sort}(\Lambda)$. We define
\begin{equation}
\mathrm{C}_{\Lambda}(q)\defin \#\{X\in \mathfrak{u}_{\Lambda}(\mathbb{F}_q): X^2=0\}.
\end{equation}
The condition $X^2=0$ is equivalent to having only blocks of size at most 2 in the Jordan form. Thus,  for $X\in \C_\mu\cap \u_\Lambda(\FF_q)$ we have $X^2=0$
if and only if
  $\ell(\mu')\leq 2$.  From this and noting that $\F_{(1^n)\Lambda}(q)=1$ we deduce:
\begin{equation}\label{Alambda:identity}
\mathrm{C}_\Lambda(q)=1+\sum_{\substack{\lambda\less \mu'\\ \ell(\mu')=2}}\F_{\mu\Lambda}(q). 
\end{equation}
Since $\F_{\mu\Lambda}(q)=\F_{\mu\lambda}(q)$, by~\cref{Alambda:identity} it follows that
$\mathrm{C}_\Lambda(q)=\mathrm{C}_\lambda(q)$.
Our first application of~\cref{Alambda:identity} is the following estimate of $\mathrm{C}_{\lambda}(q)$. 
\begin{proposition} Let $\lambda=(\lambda_1,\dots,\lambda_s)$ be a partition of $n$ and set $\rho_\alpha=(n-\alpha,\alpha)$ where $\alpha=\min\{\lfloor n/2\rfloor, n-\lambda_1\}$. 
Then $\mathrm{C}_\lambda(q)=\K_{\rho_\alpha\lambda}\, q^{n\alpha-\alpha^2}\left(1+o_q(1)\right)$. 
\end{proposition}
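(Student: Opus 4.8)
The plan is to read off the dominant term of $\mathrm{C}_\lambda(q)$, viewed as a polynomial in $q$, directly from the expansion~\cref{Alambda:identity},
$$
\mathrm{C}_\lambda(q)=1+\sum_{\substack{\lambda\less\mu'\\ \ell(\mu')=2}}\F_{\mu\lambda}(q).
$$
First I would enumerate the partitions $\mu\vdash n$ with $\ell(\mu')=2$: these are exactly $\mu^{(k)}:=(2^k,1^{n-2k})$ with $(\mu^{(k)})'=(n-k,k)$ for $1\le k\le\lfloor n/2\rfloor$. Since $\lambda\less(\mu^{(k)})'$ amounts to $\lambda_1\le n-k$, i.e.\ $k\le n-\lambda_1$, the sum in fact runs over $1\le k\le\alpha$ with $\alpha=\min\{\lfloor n/2\rfloor,\,n-\lambda_1\}$. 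In the degenerate case $s=\ell(\lambda)=1$ the sum is empty, $\mathrm{C}_\lambda(q)=1$, $\alpha=0$, $\K_{\rho_0\lambda}=1$, and the assertion is trivial; so I may and will assume $s\ge2$, whence $\alpha\ge1$.

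Next I would invoke part (2) of~\cref{mainthm:Fmula-comb}: $\F_{\mu^{(k)}\lambda}(q)$ is a polynomial in $q$ of degree $\binom{n}{2}-\fn(\mu^{(k)})$ with leading coefficient the Kostka number $\K_{(n-k,k)\,\lambda}$. Using the formula $\fn(\mu)=\sum_{i\ge1}\binom{\mu_i'}{2}$ from~\cref{n_mu:dif} one computes $\fn(\mu^{(k)})=\binom{n-k}{2}+\binom{k}{2}$, so that
$$
\deg_q\F_{\mu^{(k)}\lambda}(q)=\binom{n}{2}-\binom{n-k}{2}-\binom{k}{2}=nk-k^2.
$$

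Finally I would observe that $k\mapsto nk-k^2$ is \emph{strictly} increasing on $\{1,\dots,\alpha\}$, since the difference of consecutive values equals $(nk-k^2)-\bigl(n(k-1)-(k-1)^2\bigr)=n-2k+1\ge1$ for all $k\le\alpha\le n/2$. Hence the sum has a unique term of maximal degree, attained at $k=\alpha$, of degree $n\alpha-\alpha^2=\alpha(n-\alpha)\ge n-1\ge1$, so that the constant term $1$ also lies strictly below this degree. Therefore $\mathrm{C}_\lambda(q)=\K_{\rho_\alpha\lambda}\,q^{n\alpha-\alpha^2}+(\text{terms of lower degree})$, where $\rho_\alpha=(\mu^{(\alpha)})'=(n-\alpha,\alpha)$; and since $\lambda_1\le n-\alpha$ gives $\lambda\less\rho_\alpha$, \cref{chain-size} yields $\K_{\rho_\alpha\lambda}=\#\mathcal F(\rho_\alpha,\lambda)>0$, so this leading coefficient is genuinely nonzero. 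Dividing through by $\K_{\rho_\alpha\lambda}\,q^{n\alpha-\alpha^2}$ and letting $q\to\infty$ gives $\mathrm{C}_\lambda(q)=\K_{\rho_\alpha\lambda}\,q^{n\alpha-\alpha^2}(1+o_q(1))$. There is no substantive obstacle here: the proof is a degree count, and the only points requiring (minor) care are the evaluation $\fn(\mu^{(k)})=\binom{n-k}{2}+\binom{k}{2}$ and the verification that the degree $nk-k^2$ is strictly increasing on the relevant range of $k$.
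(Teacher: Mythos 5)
Your proof is correct and follows essentially the same route as the paper: expand $\mathrm{C}_\lambda(q)$ via~\cref{Alambda:identity}, note that the relevant $\mu$ have $\mu'=(n-k,k)$ with $k\le\alpha$, read off the degree $nk-k^2$ and leading coefficient $\K_{(n-k,k)\lambda}$ from part (2) of~\cref{mainthm:Fmula-comb}, and use monotonicity of $k\mapsto nk-k^2$ on $k\le n/2$ to identify the dominant term. Your extra care about the degenerate case $\alpha=0$, the strictness of the monotonicity, and the positivity of $\K_{\rho_\alpha\lambda}$ via~\cref{chain-size} only makes explicit what the paper leaves implicit.
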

\begin{proof} The conjugates of partitions $\mu$ of $n$ satisfying $\lambda \less \mu'$ and $\ell(\mu') = 2$ must be of the form $\mu' = (n-k, k)$, where $k \leq \min\left\{\lfloor n/2\rfloor, n - \lambda_1\right\}$. Thanks to~\cref{mainthm:Fmula-comb}, we know that $\F_{\mu\lambda}(q)$ is a polynomial of degree 
\begin{equation}\label{binom-n(nu)}
    \binom{n}{2}-\fn(\mu)=\binom{n}{2}-\binom{n-k}{2}-\binom{k}{2}=nk-k^2,
\end{equation}
with leading coefficient $\K_{\mu'\lambda}$.  Note that $nk-k^2$ is increasing on $k\leq n/2$  and so
$
\K_{\rho_\alpha\lambda}\, q^{n\alpha-\alpha^2}
$
dominates the terms in~\cref{Alambda:identity}. This  proves the claim. 
\end{proof}

\begin{remark} For $\lambda=(1^n)$ we have 
\begin{equation}
\rho_\alpha=\begin{cases}
(n/2,n/2) & n\, \text{is even};\\
((n+1)/2,(n-1)/2)  & n\, \text{is odd}.
\end{cases}
\end{equation}
In this case $\K_{\rho_\alpha\lambda}$ is the number of standard Young tableaux of shape $\rho_\alpha$. It is well-known that the number of standard Young tableaux of shape $(k,k)$ (or of shape $(k,k-1)\,$) is equal to the Catalan number $\frac{1}{k+1}\binom{2k}{k}$.
\end{remark}

\begin{notation}
In the rest of this section we work with symmetric polynomials rather than with symmetric functions. Thus, for example we assume $\x=(x_1,\ldots,x_s)$ for suitable $s\geq 1$ and then $\P_\mu(\x;q,t)$ means the Macdonald polynomial in $s$ variables $x_1,\ldots,x_s$. 
\end{notation}

We now turn our attention to $a_{nk}(\lambda)$ defined in~\cref{eq:ankaka}. 
As usual, set $s=\ell(\lambda)$. From~\cref{Intro_them: Main_Macdonald} and~\cref{binom-n(nu)}, we obtain
\begin{equation}\label{ankQ_formula}
a_{nk}(\lambda)=(q-1)^n q^{nk-k^2- \fn(\lambda') - n} \left( \prod_{1\leq i\leq s} [\lambda_i]_q! \right) [\x^\lambda] \Q_{\mu'}(\x; 1/q, 0),
\end{equation}
where $\x=(x_1,\dots,x_s)$. 
Evidently $a_{nk}(\lambda)=0$ when $k>n/2$. 
\subsection{The Cauchy identity  and the proof of~\cref{eq: Kirillov-general}}
Our next goal is to demonstrate that a generalization of Kirillov's mysterious 
relation~\cref{X2Hermit}
follows from  the Cauchy identity for Macdonald polynomials. Let $\x=(x_1,\dots,x_s)$ and $\y=(y_1,\dots,y_r)$ be two sequences of independent indeterminates, and define 
\begin{equation}\label{PI}
    \Pi(\x,\y; q,t)=\prod_{1\leq i\leq s}\prod_{1\leq j\leq r}\frac{(tx_iy_j\, ;q )_\infty}{(x_iy_j\, ; q)_\infty},\qquad (a\, ; q)_\infty :=\prod_{l\geq 0}\left(1-aq^l\right).
\end{equation}
The Cauchy identity for Macdonald polynomials~\cite[p. 324, Eq. (4.13)]{Macdonald}
 states that
 \begin{equation*}
    \sum_{\mu}\P_\mu(\y;q,t)\Q_\mu(\x; q,t)=\Pi(\x,\y; q,t). 
\end{equation*}
One can express $\Pi(\x,\y; q,t)$ in terms of the monomial symmetric polynomials. Let $g_l(\x; q, t)$ denote the coefficient of $y^l$ in the power series expansion of the infinite product
\begin{equation}\label{gm:dif}  
\prod_{1\leq i\leq s}\frac{(tx_iy\, ;q )_\infty}{(x_iy\, ; q)_\infty}=\sum_{l\geq 0}g_l(\x; q,t)y^l.
\end{equation}
This coefficient can be calculated explicitly. From~\cite[p. 314, Example 1]{Macdonald} we have
\begin{equation}\label{g_m}
    g_l(\x; q,t)=\sum_{\nu \,\vdash l}\frac{(t\,;q)_\nu}{(q\,;q)_\nu} m_\nu(\x),\qquad (a\, ; q)_\nu=\prod_{i\geq 1}(a\, ; q)_{\nu_i},
\end{equation}
where $m_{\nu}(\x)$ is the monomial symmetric polynomial. For any partition $\rho=(\rho_1,\rho_2,\dots)$ define
$$
g_{\rho}(\x; q,t):=\prod_{i\geq 1}g_{\rho_i}(\x; q,t).
$$
Moreover we have (see~\cite[p. 311, Eq. (2.10)]{Macdonald} for a proof) that
\begin{equation}
    \Pi(\x,\y; q,t)=\Pi(\y,\x; q,t)=\sum_{\rho}g_\rho(\y;q,t)m_\rho({\bf \x}).
\end{equation}
Since $\P_\rho({\bf y}; q,t)=0$ when $\ell(\rho)>r$, if we set ${\bf y}=(y_1,y_2)$ in the Cauchy identity, we obtain 
\begin{equation}\label{Cauchy}
    \sum_{\ell(\mu')\leq 2}\P_{\mu'}(y_1,y_2;q,0)\Q_{\mu'}(\x;q,0)=\sum_{\rho}g_{\rho}(y_1,y_2; q,0)\, m_{\rho}(\x).
\end{equation}
This in particular implies:
\begin{equation}\label{Cauch_cons}
    \sum_{\substack{\mu'=(n-k,k)
    \\ 0\leq k\leq n/2}}\P_{\mu'}(y_1,y_2;1/q,0)\,[\x^\lambda]\Q_{\mu'}(\x;1/q,0)=g_{\lambda}(y_1,y_2;1/ q,0).
\end{equation}
From~\cref{Cauch_cons} and~\cref{ankQ_formula} we deduce the following result.
\begin{theorem}\label{Cauch_PQg} Let $\lambda = (\lambda_1, \dots, \lambda_s)$ be a partition of $n$, and let $a_{nk}(\lambda)$ denote the number of matrices $X$ in $\u_\lambda(\mathbb{F}_q)$ of rank $k$ such that $X^2 = 0$. Then
\begin{align}\label{Cauch_PQg:eq}
        \sum_{0\leq k\leq n/2} a_{nk}(\lambda)\, q^{k(k-n)}&\P_{(n-k,k)}(y_1,y_2; 1/q,0)
        =[\lambda]_q!\,(q-1)^nq^{-n-\fn(\lambda)}g_{\lambda}(y_1,y_2;1/q,0),
\end{align}
where $[\lambda]_q!:=[\lambda_1]_q!\dots [\lambda_s]_q!$. 
\end{theorem}

We will now proceed with finding  explicit formulas for $\P_{(n-k,k)}(y_1,y_2; 1/q,0)$ and $g_\lambda(y_1,y_2;q,0)$. From~\cite[p. 323, Eq. (4.9)]{Macdonald} 
we have 
$$
\P_{(l)}(y_1,y_2; q,0)=(q\, ; q)_l\, g_l(y_1,y_2;q,0).
$$
Note that $m_{\nu}(y_1,y_2)=0$ when $\ell(\nu)\geq 3$. Thus in~\cref{g_m} we only need to consider partitions $\nu=(l-j,j)$ with $j\leq l/2$. Moreover we have 
$$
m_{(l-j,j)}(y_1,y_2)=
\begin{cases}
y_1^{l-j}y_2^j+y_1^jy_2^{l-j} & j<l-j;\\
y_1^{l/2}y_2^{l/2} & j=l-j.
\end{cases}
$$
As a consequence of these we obtain
\begin{equation*}
    \begin{split}
        \P_{(l)}(y_1,y_2;q,0)
        &=\sum_{l-j\geq j\geq 0}\frac{(q\, ; q)_l}{(q\, ; q)_{l-j}\, (q\, ; q)_j}m_{(l-j,j)}(y_1,y_2)
        =\sum_{j=0}^l {l\brack j}_q y_1^{l-j}y_2^{j}.
    \end{split}
\end{equation*}
But  $
\P_{(n-k,k)}(y_1,y_2; q,0)=(y_1y_2)^k\, \P_{(n-2k)}(y_1,y_2; q,0)
$ by~\cite[p. 325, Eq. (4.17)]{Macdonald}),
 hence for  $0\leq k\leq n/2$ we have
    \begin{equation}\label{Mac_2}
    \P_{(n-k,k)}(y_1,y_2; 1/q,0)=(y_1y_2)^k\sum_{j=0}^{n-2k}{n-2k\brack j}_{1/q}\, y_1^{n-2k-j}y_2^j.
    \end{equation}
\begin{remark}
The formula for $\P_{(n-k,k)}(y_1,y_2; 1/q,0)$
in
\cref{Mac_2} is probably well-known, but we have included a proof for the reader's convenience.
\end{remark}

\begin{remark} By setting $y_1=e^{i\theta}$ and $y_2=e^{-i\theta}$ in~\cref{Mac_2} we obtain
\begin{align}
\label{P_Hermit}
\P_{(n-k,k)}(e^{i\theta},e^{-i\theta};1/q,0)&=\sum_{l=0}^{n-2k}{n-2k \brack l}_{1/q}e^{(n-2k-2l)i\theta}\\
&=H_{n-2k}(\cos(\theta)\, |\, 1/q).
\notag
\end{align}
where $H_m(w\, |\, q)$ is the $q$-Hermite polynomial. 
\end{remark}

We turn our attention to $g_\lambda(y_1,y_2;q,0)$ by expressing it in terms of power sum symmetric polynomials. It is known~\cite[p. 311, Eq. (2.9)]{Macdonald} that  
\begin{equation*}
    g_n(y_1,y_2; q, 0) = \sum_{\rho \,\vdash\, n} z_\rho(q,0)^{-1} \, p_\rho(y_1,y_2),\qquad 
    \text{where }\quad z_\rho(q, t)\defin z_\rho \prod_{i \geq 1} \frac{1 - q^{\rho_i}}{1 - t^{\rho_i}},
\end{equation*}
with $z_\rho$ as in~\cref{z_rho} and $p_\rho(y_1,y_2)$ denoting the power sum symmetric polynomial defined as  
\[
p_\rho(y_1,y_2) \defin \prod_{i \geq 1} p_{\rho_i}(y_1,y_2), \qquad p_k(y_1,y_2) = y_1^k + y_2^k.
\]
It is straightforward to show that 
$$
z_\rho(1/q,0)^{-1}=q^{|\rho|}z_\rho^{-1}\prod_{i\geq 1}\frac{1}{q^{\rho_i}-1},
$$
which implies
\begin{equation}\label{glambda:eq}
    g_\lambda(y_1,y_2; 1/q,0)=q^n\prod_{i=1}^s\sum_{\rho\, \vdash \lambda_i}\frac{1}{z_\rho\prod_{i\geq 1}(q^{\rho_i}-1)}p_\rho(y_1,y_2).
\end{equation}
Recall the definition of 
the Chebyshev polynomials
$T_{\rho}(w)$ from
\cref{eq:Trho}.
With this preparation, we are ready to prove~\cref{eq: Kirillov-general}.
\begin{proof}[Proof of~\cref{eq: Kirillov-general}] We prove this by specializing $y_1=e^{i\theta}$ and $y_2=e^{-i\theta}$ in~\cref{Cauch_PQg:eq}. We start by simplifying the right side of the equation. From the definition of $p_k(y_1,y_2)$ we get $$p_k\left(e^{i\theta},e^{-i\theta}\right)=2T_k(\cos(\theta)).$$
Hence from~\cref{glambda:eq} we infer that
$$
g_\lambda\left(e^{i\theta},e^{-i\theta}; 1/q,0\right)=q^n\prod_{i=1}^s\sum_{\rho\, \vdash \lambda_i}\frac{2^{\ell(\rho)}T_{\rho}(\cos(\theta))}{z_\rho\prod_{i=1}^{\ell(\rho)}\left(q^{\rho_i}-1\right)}.
$$
Thus the right hand side of~\cref{Cauch_PQg:eq} becomes
$$
[\lambda]_q! (q-1)^nq^{-\fn(\lambda')}\prod_{i=1}^s\sum_{\rho\, \vdash \lambda_i}\frac{2^{\ell(\rho)}T_{\rho}(\cos(\theta))}{z_\rho\prod_{i=1}^{\ell(\rho)}\left(q^{\rho_i}-1\right)}.
$$
On the other hand, from~\cref{P_Hermit}, the left hand side of~\cref{Cauch_PQg:eq} becomes
$$
\sum_{k=0}^n a_{nk}(\lambda)q^{k(k-n)}H_{n-2k}(\cos(\theta)\,|\, 1/q).
$$
This finishes the proof. 
\end{proof}
\begin{remark} Specializing $y_1=1$ and $y_2=-1$ in~\cref{Cauch_PQg:eq} gives us another interesting identity. Let $\lambda$ be a partition of $n=2m$ with at least one odd part. Obviously $p_k(y_1,y_2)=0$ when $k$ is odd. Thus, from~\cref{glambda:eq}, we infer that the right hand side of~\cref{Cauch_PQg:eq} is zero. One can also show
$$
\sum_{j=0}^n (-1)^j{n\brack j}_q=
\begin{cases}
    0 & \text{if $n$ odd};\\
    \prod_{i=1}^{n/2}(1-q^{2j-1}) & \text{if $n$ is even}.
\end{cases}
$$
This in particular implies 
$$
\P_{(n-k,k)}(y_1,y_2;1/q,0)=(-1)^k\prod_{j=1}^{m-k}\left(1-q^{1-2j}\right)=(-1)^m\prod_{j=1}^{m-k}\left(q^{1-2j}-1\right)
.$$
Therefore we deduce 
$$
\sum_{k=0}^{m}a_{nk}(\lambda)q^{k(k-n)}\prod_{j=1}^{m-k}\left(q^{1-2j}-1\right)=0
.$$
\end{remark}

\subsection{Proofs of~\cref{anklambda-explicit},~\cref{cor-diffekhad}, and~\cref{Ekhad_thm}} 
Recall that $\Q_\lambda(\x,q,t)$ denotes the dual Macdonald polynomial indexed by $\lambda$. For integers $r\geq s\geq 0$
set \[\Q_{(r,s)}(\x,q):=\Q_{(r,s)}(\x\, ;q,0).\]
Here $(r,s)$ denotes a partition of length at most two. 
We use the main theorem of 
Jing and J\'{o}zefiak~\cite[Eq. (1)]{Jing}, which gives a formula for the $Q_{(r,s)}$  
in terms of the $\Q_{(r+i)}$ and $\Q_{(s-i)}$. Setting $t=0$, $r=n-k$, and $s=k$ in the formula~\cite[Eq. (1)]{Jing}, we obtain 
\begin{align}\label{reu_Mac}
    \Q_{(n-k,k)}&(\x,q)\\
    &= \sum_{j=0}^k (-1)^jq^{\binom{j}{2}}{n-2k+j \brack j}_q\frac{1-q^{n-2k+2j}}{1-q^{n-2k+j}}\Q_{(n-k+j)}(\x,q)\Q_{(k-j)}(\x,q),
    \notag
\end{align}
where we use the conventions 
$\Q_{(0)}(\x\, ;q,t)=1$ and
 $(1-q^{n-2k+2j})/(1-q^{n-2k+j})=1$ when $n-2k=j=0$. Moreover by~\cite[p. 329, Eq. (2.9)]{Macdonald}  we have
$$\Q_{(l)}(\x\, ;q,t)=g_l(\x\, ;q,t),$$
where $g_l(\x\, ;q,t)$ was defined in~\cref{gm:dif}. Henceforth we set 
\[g_{l}(\x,q):=g_{l}(\x\, ;q,0).
\]
Thus from~\cref{g_m} we obtain
\begin{equation*}
\begin{split}
    [\x^\lambda]\Q_{(n-k,n)}&(\x,q)\\
    &=\sum_{j=0}^k (-1)^jq^{\binom{j}{2}}{n-2k+j \brack j}_q\frac{1-q^{n-2k+2j}}{1-q^{n-2k+j}}\, [\x^\lambda]g_{n-k+j}(\x,q)g_{k-j}(\x,q)\\
    &=\sum_{j=0}^k (-1)^jq^{\binom{j}{2}}{n-2k+j \brack j}_q\frac{1-q^{n-2k+2j}}{1-q^{n-2k+j}}\, [\x^\lambda]\sum_{\substack{\nu\, \vdash n-k+j\\ \rho\, \vdash k-j}}\frac{m_{\nu}(\x)m_\rho(\x)}{(q\, ;q)_\nu\,(q\, ;q)_\rho}.
\end{split}
\end{equation*}
The monomial symmetric polynomial is a specialization of the Hall-Littlewood polynomial at $t=1$. Thus from~\cite[Sec. III.3, p. 215]{Macdonald} we have 
$$
[\x^\lambda]\sum_{\substack{\nu\, \vdash n-k+j\\ \rho\, \vdash k-j}}\frac{m_{\nu}(\x)m_\rho(\x)}{(q\, ;q)_\nu\,(q\, ;q)_\rho}=\sum_{\substack{\nu\, \vdash n-k+j\\ \rho\, \vdash k-j}}\frac{f_{\nu\rho}^\lambda(1)}{(q\, ;q)_\nu\,(q\, ;q)_\rho},
$$
where, as before, $f_{\nu\rho}^\lambda(t)$ is a structure constant associated with products of Hall-Littlewood polynomials.  
But \[
(1/q\,;1/q)_n=\prod_{j=1}^n\left(1-\frac{1}{q^j}\right)=\frac{(q-1)^n}{q^{n+\binom{n}{2}}}[n]_q!\quad\text{and}\quad
{n \brack k}_q=q^{k(n-k)}{n\brack k}_{1/q}.
\]
Hence  $[\x^\lambda]\Q_{(n-k,k)}(\x\, ;1/q)$ is equal to
\begin{equation*}
\sum_{j=0}^k(-1)^j\frac{q^{n-\binom{j+1}{2}-(n-2k)j}}{(q-1)^n}{n-2k+j \brack j}_q\frac{q^{n-2k+2j}-1}{q^{n-2k+j}-1}\sum_{\substack{\nu\, \vdash n-k+j\\ \rho\, \vdash k-j}}\frac{q^{\fn(\nu')+\fn(\rho')}f_{\nu\rho}^\lambda(1)}{[\nu]![\rho]!}.
\end{equation*}
This combined with~\cref{ankQ_formula} proves~\cref{anklambda-explicit}. 
\begin{remark}
The formula~\cref{reu_Mac} of    Jing and J\'{o}zefiak  has been generalized by Lassalle and Schlosser~\cite[Theorem 4.1]{Lassalle}. It might be possible to use their result to count the number of matrices $X\in\u_\lambda(\FF_q)$ satisfying $X^d=0$.  
\end{remark}
We now proceed towards the proof of~\cref{Ekhad_thm}.
Since $a_{nk}=0$ for $k>\lfloor n/2\rfloor $, 
 from~\cref{ank_explicitformula} it follows immediately that 
\begin{equation*}
\mathrm{C}_n=\sum_{k=0}^{ \lfloor\frac{n}{2}\rfloor}
\sum_{j=0}^k(-1)^jq^{nk-k^2-\binom{j+1}{2}-(n-2k)j}{n-2k+j \brack j}_q\binom{n}{k-j}\frac{q^{n-2k+2j}-1}{q^{n-2k+j}-1}
.
\end{equation*}
For $k_1=k-j$ and $k_2=j$ we have $0\leq k_1+k_2\leq \lfloor\frac n2\rfloor$
and we can rearrange the right hand summation as
\begin{equation*}
\mathrm{C}_{n}=
\sum_{0\leq k_1+k_2\leq\lfloor\frac n 2 \rfloor}q^{nk_1-k_1^2}\binom{n}{k_1}
(-1)^{k_2}q^{\binom{k_2}{2}}{n-2k_1-k_2 \brack k_2}_q\frac{[n-2k_1]_q}{[n-2k_1-k_2]_q},
\end{equation*}
where on the right hand side we use the convention $(q^0-1)/(q^0-1)=1$ when $n-2k_1=0$. This proves~\cref{cor-diffekhad}.
Next we define 
$$
\B_n:=\begin{cases}
    1 & n=0;\\
\displaystyle    \sum_{j=0}^{\lfloor n/2\rfloor }(-1)^jq^{\binom{j}{2}}{n-j \brack j}_q\frac{[n]_q}{[n-j]_q} & n\geq 1.
\end{cases}
$$
Thus the last formula for $\mathrm{C}_n$ can be rewritten  as
\begin{equation}\label{C_nB_n}
        \mathrm{C}_{n}=\sum_{k_1=0}^{\lfloor n/2\rfloor }q^{nk_1-k_1^2}\binom{n}{k_1}\B_{n-2k_1}. 
\end{equation}
Our next task  is to simplify $\B_n$. We need the following lemma, which was first proved in~\cite[Lemma 2]{Ekhad} by the ``A=B'' method, using a computer. For a $q$-hypergeometric proof see~\cite{Warnaar}. 
\begin{lemma}\label{Ekahd_lemma} Let 
$\A_n:=\sum_{j=0}^{\lfloor n/2\rfloor }(-1)^jq^{\binom{j}{2}}{n-j \brack j}_q
$ for $n\geq 0$. Then 
\[
\A_n=(-1)^n\chi_3(n+1)q^{\frac{1}{3}\binom{n}{2}}
,
\] where $\chi_3$ is the Dirichlet character defined in~\cref{Dirichlet}. 
    \end{lemma}
We let $\A_n=0$ when $n<0$.  Note that $[n]_q=[n-j]_q+q^{n-j}[j]_q$ and so for each $n\geq 0$ we have 
$\B_n=\A_n-q^{n-1}\A_{n-2}$. Thus by~\cref{Ekahd_lemma} we have
\begin{equation}\label{B_n}
\B_n=(-1)^n\left[\chi_3(n+1)q^{\frac{1}{3}\binom{n}{2}}-\chi_3(n-1)q^{\frac{1}{3}\binom{n+1}{2}}\right]\qquad \text{ for }n\geq 1.
\end{equation}
We only prove~\cref{Ekhad_thm} for $\mathrm{C}_{2n+1}$, as the proof for 
$\mathrm{C}_{2n}$ is analogous. 
Set \[
\chi(n,j)\defin        
\left[        \chi_3(2(n-j))q^{\frac{1}{3}\binom{2(n-j)+2}{2}}-\chi_3(2(n-j+1))q^{\frac{1}{3}\binom{2(n-j)+1}{2}}
\right]
\]
and 
\[
\chi'(n,j)\defin
\left[\chi_3(n-j+1)q^{\frac{1}{3}\binom{2(n-j)+1}{2}}-\chi_3(n-j)q^{\frac{1}{3}\binom{2(n-j)+2}{2}}\right].
\]
By~\cref{C_nB_n} and~\cref{B_n},
\begin{equation*}
\begin{split}
        \mathrm{C}_{2n+1}&=\sum_{j=0}^n q^{(2n+1)j-j^2}\binom{2n+1}{j}\B_{2n+1-2j}\\
        &=\sum_{j=0}^n q^{(2n+1)j-j^2}\binom{2n+1}{j}
        \chi(n,j)
        \\
        &=\sum_{j=0}^n q^{(2n+1)j-j^2}\binom{2n+1}{j}
		\chi'(n,j)        
        \\
        &=\sum_{j=0}^n q^{n^2+n-j^2-j}\binom{2n+1}{n-j}\left[\chi_3(j+1)q^{\frac{1}{3}\binom{2j+1}{2}}-\chi_3(j)q^{\frac{1}{3}\binom{2j+2}{2}}\right]
.\end{split}
\end{equation*}
This proves the first formula in~\cref{Ekhad_thm} for $\mathrm{C}_{2n+1}$. To prove the second formula for $\mathrm{C}_{2n+1}$, define $i
\mapsto \1_i$ to be the characteristic function of $\{0,1,\dots,n\}$, i.e., $\1_i=1$ for $0\leq i\leq n$ and $\1_i=0$ otherwise. 
Set
\[
\chi''(i,j)\defin
\left[\chi_3(i+1)q^{\frac{1}{3}\binom{2(3j+i)+1}{2}}-\chi_3(i)q^{\frac{1}{3}\binom{2(3j+i)+2}{2}}\right].
\]

Then from the last formula for $\mathrm{C}_{2n+1}$ we obtain
\begin{align*}
\frac{\mathrm{C}_{2n+1}}{q^{n^2+n}}&=
\sum_{i=0}^2\sum_{j\in \ZZ}q^{-(3j+i)^2-(3j+i)}\binom{2n+1}{n-3j-i}\1_{3j+i}\, \chi''(i,j)
\\
&=I+II,
\end{align*}
where 
\begin{equation*}
\begin{split}
I&=\sum_{j\in \ZZ}q^{-3j^2-2j}\left[\1_{3j}\binom{2n+1}{n-3j}-\1_{3j+1}\binom{2n+1}{n-3j-1}\right],
\end{split}\end{equation*}
and
\begin{equation*}
    \begin{split}
II&=\sum_{j\in \ZZ}q^{-3j^2-4j-1}\left[\1_{3j+2}\binom{2n+1}{n-3j-2}-\1_{3j+1}\binom{2n+1}{n-3j-1}\right].
\end{split}
\end{equation*}
Using $\binom{2n+1}{a}=\binom{2n+1}{2n+1-a}$ and then substituting $j$ by  $-j-1$ in $II$, we obtain
\begin{equation*}
\begin{split}
    II&=\sum_{j\in \ZZ}q^{-3j^2-4j-1}\left[\1_{3j+2}\binom{2n+1}{n+3j+3}-\1_{3j+1}\binom{2n+1}{n+3j+2}\right]\\
    &=\sum_{j\in \ZZ}q^{-3j^2-2j}\left[\1_{-3j-1}\binom{2n+1}{n-3j}-\1_{-3j-2}\binom{2n+1}{n-3j-1}\right].
\end{split}
\end{equation*}
Finally, note that
\begin{equation*}
    \begin{split}
       \binom{2n+1}{n-3j}&= \1_{3j}\binom{2n+1}{n-3j}+\1_{-3j-1}\binom{2n+1}{n-3j},
    \end{split}
\end{equation*}
and
\begin{equation*}
    \begin{split}
        \binom{2n+1}{n-3j-1}&=\1_{3j+1}\binom{2n+1}{n-3j-1}+\1_{-3j-2}\binom{2n+1}{n-3j-1}.
    \end{split}
\end{equation*}
Using the latter relations in $I+II$ proves the second formula for $\mathrm{C}_{2n+1}$ in~\cref{Ekhad_thm}. 

\section{Hook partitions and the proof of~\cref{hook_formula}} 
The goal of this section is to prove~\cref{hook_formula} using~\cref{mainthm:Fmula-comb} combined with~\cref{Ram-Schlosser}. The conjugate of $\mu$ is $\mu'=(n-k,1^k)$. Then the exponent of $q-1$ in~\cref{hook_formula}, using~\cref{mainthm:Fmula-comb}, is $n-(n-k)=k$ and the exponent of $q$ is 
$$
\binom{n}{2}+\ell(\mu)-\fn(\mu)-n=\binom{n}{2}+n-k-\binom{n-k}{2}-n=\binom{n}{2}-\binom{n-k}{2}-k.
$$
Next we prove this simple lemma. 
\begin{lemma}\label{ks:ine} Let $\lambda=(\lambda_1,\dots,\lambda_s)$ be a partition of $n$ and let $\mu'=(n-k,1^k)$ be a hook partition. Then $\lambda\less\mu'$ if and only if $k\leq s-1$.
\end{lemma}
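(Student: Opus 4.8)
The plan is to unwind the definition of the dominance order and check the required partial-sum inequalities directly. First I would record the shape of the relevant partial sums: since $\mu'=(n-k,1^k)$ has $|\mu'|=n$, we have $\mu'_1+\cdots+\mu'_i=(n-k)+(i-1)$ for $1\le i\le k+1$, while $\mu'_1+\cdots+\mu'_i=n$ for every $i\ge k+1$. Throughout I would use that $\ell(\lambda)=s$, so that each of $\lambda_1,\dots,\lambda_s$ is a positive integer.

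For the implication $\lambda\less\mu'\Rightarrow k\le s-1$, the quickest route is to invoke the remark recorded right after the definition of the dominance order, namely that $\ell(\nu)\le\ell(\mu)$ whenever $\mu\less\nu$; applying it with $\mu=\lambda$ and $\nu=\mu'$ gives $k+1=\ell(\mu')\le\ell(\lambda)=s$. If instead one wants a self-contained argument, the same conclusion drops out of the dominance inequality at index $i=s$: assuming $k\ge s$ one would get $\mu'_1+\cdots+\mu'_s=(n-k)+(s-1)<n=\lambda_1+\cdots+\lambda_s$, contradicting $\lambda\less\mu'$.

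For the converse I would assume $k\le s-1$ and verify $\lambda_1+\cdots+\lambda_i\le\mu'_1+\cdots+\mu'_i$ for every $i\ge 1$, splitting into two ranges. For $i\ge k+1$ the inequality is immediate, since the right-hand side equals $n\ge\lambda_1+\cdots+\lambda_i$. For $1\le i\le k$, the parts $\lambda_{i+1},\dots,\lambda_s$ number $s-i\ge(k+1)-i$ and are each at least $1$, so $\lambda_{i+1}+\cdots+\lambda_s\ge k-i+1$, whence
\[
\lambda_1+\cdots+\lambda_i=n-(\lambda_{i+1}+\cdots+\lambda_s)\le n-(k-i+1)=(n-k)+(i-1)=\mu'_1+\cdots+\mu'_i,
\]
which is exactly the desired bound. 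I do not expect any genuine obstacle here: the lemma is a short exercise in the dominance order, and the only point needing a little care is the bookkeeping of which partial sums of the hook $\mu'$ are being compared, together with the systematic use of the fact that $\lambda$ has precisely $s$ positive parts.
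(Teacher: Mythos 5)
Your proof is correct and follows essentially the same route as the paper: the forward implication via the length comparison $\ell(\mu')\leq\ell(\lambda)$, and the converse by bounding $\lambda_1+\cdots+\lambda_i$ using that the remaining $s-i$ parts of $\lambda$ are each at least $1$, which is exactly the pattern the paper carries out for $i=1,2$ and then extends. Your write-up is just a more systematic version of the same argument (explicitly splitting into the ranges $i\leq k$ and $i\geq k+1$), so there is nothing to flag.
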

\begin{proof}
    Obviously $\lambda\less \mu'$ implies $\ell(\mu')\leq \ell(\lambda)$ and so $k\leq s-1$. Now assume $k\leq s-1$. Then 
    $$
    \lambda_1+(s-1)\leq \lambda_1+\cdots+\lambda_s=n-k+k\Rightarrow \lambda_1\leq n-k+(k-(s-1))\Rightarrow \lambda_1\leq n-k.
    $$
    Similarly 
    $
    \lambda_1+\lambda_2+(s-2)\leq n-k+1+(k-1),$
    which gives $ \lambda_1+\lambda_2\leq n-k+1+(k-1-(s-2))$ and so $\lambda_1+\lambda_2\leq n-k+1$ since $k\leq s-1$. The inequality for $\lambda_1+\dots+\lambda_i$ is deduced from the same pattern. 
\end{proof}
To apply~\cref{mainthm:Fmula-comb}, we need to calculate $s_q(T)r_q(T)$ explicitly. For this purpose, we must describe all semistandard Young tableaux of shape $\mu'$ and content $\lambda$. We claim that any such semistandard Young tableau arises from a subset of $\{2,\dots,s\}$ of size $k$. The existence of such a subset is guaranteed by~\cref{ks:ine}.

Note that the top-left box of any tableau of shape $\mu'$ and content $\lambda$ must be 1. Given a semistandard tableau of shape $\mu'$ and content $\lambda$, the set of entries in rows 2 through $k + 1$ of $\mu'$ determine a unique size subset of ${2,\ldots,s}$ of size $k$. this proves the claim and show that
$$
\K_{\mu'\lambda}=\binom{s-1}{k}.
$$
Now fix $2 \leq j \leq s - k + 1$ and pick $A$, a subset of $\{2, \dots, s\}$, with $\min A = j$. This subset produces a unique tableau $T$. Note that numbers strictly less than $j$ must only appear in the first row of $T$. Because of this, the number of entries in the first row of $T$ that are strictly less than $j$ is $\lambda_1 + \dots + \lambda_{j-1}$. Thus
$$
r_q(T)=[\lambda_1+\cdots+\lambda_{j-1}]_q\, .
$$
For any $i\not\in A$ we have $\beta^i=(\lambda_i,0,\dots)$, on the other hand $\beta^i=(\lambda_i-1,1,0,\cdots)$ when $i\in A$. Therefore 
$$
s_q(T)=\prod_{i\in A}[\lambda_i]_q. 
$$
By combining these facts with~\cref{mainthm:Fmula-comb}, we deduce~\cref{hook_formula}.
\section{Double cosets and the proof of~\cref{Double_cosets:thm}}
Let $\hes_1\in\mathcal{H}_n$. By the bijective correspondence $X\mapsto I+X$
between $\u_{\hes_1}(\FF_q)$ and 
$\U_{\hes_1}=\U_{\hes_1}(\FF_q)$,  the number of matrices in $\U_{\hes_1}$ of Jordan type $\mu$ is equal to $\F_{\mu\hes_1}(q)$. Let $\theta_{\hes_1}$ denote the character of $\Ind_{\U_{\hes_1}}^\G 1$, where $\G = \GL_n(\FF_q)$. Note that $|\U_{\hes_1}|=q^{n^2-|\hes_1|}$ and so
from~\cref{ind-form} we obtain
\begin{equation}\label{theta_lmabda:Form}
\theta_{\hes_1}(u)=
\begin{cases}
0 & u\, \text{is not a unipotent matrix};\\
\frac{|\cen(I+J_\mu)|}{q^{n^2-|\hes_1|}}\F_{\mu\hes_1}(q) & u\in\conj(I+J_\mu). 
\end{cases}
\end{equation}
Now let $\hes_2\in\mathcal{H}_n$ be another Hessenberg function. We have the well-known equality
\begin{equation}\label{char_double}
    \langle\theta_{\hes_1},\theta_{\hes_2}\rangle=\left|\U_{\hes_1}\backslash\G/\U_{\hes_2}\right|,
\end{equation}
which follows from Frobenius reciprocity and Mackey decomposition. 
Let \[
\mathbf{U}=\bigsqcup_{\mu\,\vdash n}\conj(I+J_\mu)
\] denote the set of all unipotent matrices. Then 
\begin{equation}
\begin{split}
\langle\theta_{\hes_1},\theta_{\hes_2}\rangle&=\frac{1}{|\G|}\sum_{u\in \G}\theta_{\hes_1}(u)\overline{\theta_{\hes_2}(u)}=\frac{1}{|\G|}\sum_{u\in \mathbf{U}}\theta_{\hes_1}(u)\overline{\theta_{\hes_2}(u)}\\
&=\frac{1}{|\G|}\sum_{\mu\vdash n}\sum_{u\in\conj(I+J_\mu)}\frac{|\cen(I+J_\mu)|^2}{q^{2n^2-|\hes_1|-|\hes_2|}}\F_{\mu\hes_1}(q)\,\F_{\mu\hes_2}(q)\\
&=\frac{1}{q^{2n^2-|\hes_1|-|\hes_2|}}\sum_{\mu\vdash n} |\cen(I+J_\mu)|\F_{\mu\hes_1}(q)\, \F_{\mu\hes_2}(q). 
\end{split}
\end{equation}
This calculation combined with~\cref{char_double} proves~\cref{Double_cosets:thm}.

\section{Appendix: Counting matrices with a given rank sequence}

Recall that $\mathrm{C}^r_l$ is defined in~\cref{Cnr}. The following lemma is probably well-known.
\begin{lemma}\label{rank1} Suppose $l\ge 1$ is an arbitrary positive integer and  let $ 0 \le r \le l$. Let $W$ be an $r$-dimensional vector space over $\FF_q$. Then the number of $l$-tuples $(w_1, \dots, w_l) \in W^{\oplus l}$ with the property that $W= \spn(v_1, \dots, v_l)$ is equal to $\mathrm{C}_l^r$. 
\end{lemma}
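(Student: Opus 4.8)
The plan is to reinterpret the count in terms of linear maps and then invoke the standard duality between surjections and injections. Fix an isomorphism $W\cong\FF_q^r$ and identify an $l$-tuple $(w_1,\dots,w_l)\in W^{\oplus l}$ with the linear map $\phi\colon\FF_q^l\to W$ determined by $\phi(e_i)=w_i$, where $e_1,\dots,e_l$ is the standard basis of $\FF_q^l$. This identification is a bijection from $W^{\oplus l}$ onto $\operatorname{Hom}(\FF_q^l,W)$, and under it the condition $W=\spn(w_1,\dots,w_l)$ is precisely the condition that $\phi$ be surjective, since the image of $\phi$ is exactly the span of the $w_i$. Hence the quantity to be computed equals the number of surjective linear maps $\FF_q^l\twoheadrightarrow\FF_q^r$.

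Next I would pass to transposes. Sending a linear map to its dual map gives a bijection $\operatorname{Hom}(\FF_q^l,\FF_q^r)\to\operatorname{Hom}(\FF_q^r,\FF_q^l)$ which carries surjections to injections, because over a field a map is surjective exactly when its transpose is injective (both reflect the statement $\rk(\phi)=r$, using $\rk(\phi)=\rk(\phi^{T})$). Therefore the number of surjections $\FF_q^l\twoheadrightarrow\FF_q^r$ equals the number of injective linear maps $\FF_q^r\hookrightarrow\FF_q^l$, which in turn is the number of ordered $r$-tuples of linearly independent vectors in $\FF_q^l$. The classical greedy count then finishes it: the first vector can be any of the $q^l-1$ nonzero vectors, and once $v_1,\dots,v_{i-1}$ have been chosen the $i$-th may be any of the $q^l-q^{i-1}$ vectors outside their span, giving $\prod_{i=0}^{r-1}(q^l-q^i)=\mathrm{C}_l^r$ as defined in~\cref{Cnr}.

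There is essentially no obstacle here; the argument is forced. The only points warranting a word of care are the degenerate case $r=0$, where both sides equal $1$ (the unique tuple being $(0,\dots,0)$, which spans the zero space), and the remark that transposition interchanges injectivity and surjectivity over a field. As an alternative to the transpose step one could compute the number of spanning tuples directly by M\"obius inversion over the subspace lattice of $W$, writing $q^{rl}=\sum_{U\le W}\bigl(\#\,\text{spanning }l\text{-tuples of }U\bigr)$ and inverting, but the duality argument is shorter and I would use it.
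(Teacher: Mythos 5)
Your proposal is correct. It takes a mildly different route from the paper's: the paper also identifies a spanning $l$-tuple with an $l\times r$ matrix of rank $r$ (the rows being the coordinates of the $w_i$ in a fixed basis of $W$), but it then simply cites Landsberg's classical formula for the number of $l\times n$ matrices of rank $r$ and specializes $n=r$, so that the binomial factor ${r\brack r}_q$ collapses to $1$ and $\mathrm{C}_l^r$ remains. You instead prove the count from scratch: reinterpreting the tuple as a linear map $\FF_q^l\to W$, noting that spanning means surjectivity, passing by transposition to injective maps $\FF_q^r\to\FF_q^l$ (legitimate, since $\rk(\phi)=\rk(\phi^T)$ and surjectivity, resp.\ injectivity, is equivalent to rank $r$ on either side), and then counting ordered linearly independent $r$-tuples in $\FF_q^l$ greedily to get $\prod_{i=0}^{r-1}(q^l-q^i)=\mathrm{C}_l^r$ as in \cref{Cnr}. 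What your argument buys is self-containedness — it is in effect a proof of the special case of Landsberg's formula that the paper invokes — at the cost of a few extra lines; the paper's version is shorter because it leans on the citation. Your attention to the degenerate case $r=0$ (empty product, the all-zero tuple) is a small point the paper leaves implicit.
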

\begin{proof}
By expressing elements of  $W$ in a chosen basis, one observes that the assertion is equivalent to counting the number of $l \times r$ matrices over $\mathbb{F}_q$ of rank $r$. A classical formula due to Landsberg gives the number of $l \times n$ matrices of rank $r$ as
\[
{n \brack r}_q \, \left( q^l - 1 \right) \left( q^l - q \right) \cdots \left( q^l - q^{r-1} \right)
= {n \brack r}_q q^{\binom{r}{2}} (q-1)^r \frac{[l]_q!}{[l - r]_q!}.
\]
Setting $n = r$ yields the lemma.
\end{proof}

\begin{lemma}\label{rank_profile} Let $n = n_1 \geq n_2 \geq \cdots \geq n_\ell > n_{\ell+1} = 0$ be a sequence of positive integers, and let $m = r_0 \geq r_1 \geq \cdots \geq r_\ell \geq r_{\ell+1} = 0$ be a sequence of non-negative integers, with the property that $r_i - r_{i+1} \leq n_i - n_{i+1}$ for $1 \leq i \leq \ell$. Then, the number of $n \times m$ matrices over $\mathbb{F}_q$ with $\text{rank}(A_i) = r_i$ for $1 \leq i \leq \ell$, where $A_i$ is the submatrix of $A$ formed by the first $n_i$ rows of $A$, is
\begin{equation}\label{rank_profile:number}
    \prod_{j=1}^{\ell}q^{r_{j+1}(n_j-n_{j+1})}{m-r_{j+1}\brack r_{j}-r_{j+1}}_q\mathrm{C}^{r_j-r_{j+1}}_{n_j-n_{j+1}}.
\end{equation}
\end{lemma}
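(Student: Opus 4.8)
The plan is to peel off the rows of $A$ in blocks, from the top, using the submatrix filtration $A_1 \subseteq A_2 \subseteq \cdots \subseteq A_\ell$, and count the number of valid extensions at each stage. I would proceed by induction on $\ell$. For the base case $\ell = 1$ we have $n = n_1$, $n_2 = 0$, and we must count $n \times m$ matrices of rank exactly $r_1$; since the rows of such a matrix must span an $r_1$-dimensional subspace of $\FF_q^m$, I would stratify by the row space $W$. There are $\GC{m}{r_1}$ choices of $W$, and by \cref{rank1} there are $\mathrm{C}^{r_1}_{n_1}$ tuples of $n_1$ vectors in $W$ spanning it; the product $\GC{m}{r_1}\mathrm{C}^{r_1}_{n_1}$ matches \cref{rank_profile:number} since the factor $q^{r_2(n_1-n_2)} = q^0 = 1$.

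For the inductive step, write $A$ as $A_1$ (the top $n_1$ rows, but really I mean: split off the \emph{bottom} block). Actually the cleaner bookkeeping is the reverse: let $A'$ denote the submatrix formed by the first $n_2$ rows — this is $A_2$ in the statement's indexing shifted — wait, better to split off the top $n_1 - n_2$ rows from the remaining $n_2 \times m$ matrix $A_2$. So: an admissible $A$ is specified by (i) an admissible $n_2 \times m$ matrix $A_2$ with $\rk(A_i) = r_i$ for $2 \leq i \leq \ell$ (to which the inductive hypothesis applies, giving the product $\prod_{j=2}^\ell$), together with (ii) a choice of the top $n_1 - n_2$ rows making $\rk(A_1) = r_1$. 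The count in (ii) must come out to $q^{r_2(n_1-n_2)}\GC{m-r_2}{r_1-r_2}\mathrm{C}^{r_1-r_2}_{n_1-n_2}$, which is exactly the $j=1$ factor.

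So the heart of the argument is the following local count: given a fixed $n_2 \times m$ matrix $A_2$ with row space $V$ of dimension $r_2$, and given $r_1 - r_2 \leq n_1 - n_2$, count the $(n_1 - n_2)$-tuples of vectors $(w_1,\dots,w_{n_1-n_2})$ in $\FF_q^m$ such that $\dim(V + \spn(w_1,\dots,w_{n_1-n_2})) = r_1$. I would prove this by first choosing the target subspace: the number of $r_1$-dimensional subspaces $U \supseteq V$ of $\FF_q^m$ equals the number of $(r_1-r_2)$-dimensional subspaces of $\FF_q^m / V \cong \FF_q^{m-r_2}$, namely $\GC{m-r_2}{r_1-r_2}$. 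Then, for a fixed such $U$, I must count tuples $(w_1,\dots,w_{n_1-n_2}) \in U^{\oplus(n_1-n_2)}$ whose images span $U/V$ — decompose $U = V \oplus C$ with $\dim C = r_1 - r_2$ (a complement); writing $w_i = v_i + c_i$ with $v_i \in V$, $c_i \in C$, the condition is exactly that $(c_1,\dots,c_{n_1-n_2})$ span $C$, which by \cref{rank1} happens in $\mathrm{C}^{r_1-r_2}_{n_1-n_2}$ ways, while the $v_i$ are free, contributing $q^{r_2(n_1-n_2)}$ (since $\dim V = r_2$). Multiplying the three factors and invoking the inductive hypothesis gives \cref{rank_profile:number}.

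\textbf{The main obstacle} I anticipate is purely organizational: getting the submatrix indexing and the shift between ``$A_i$ = first $n_i$ rows'' and ``block peeled off at stage $j$'' consistent, so that the telescoping product $\prod_{j=1}^\ell$ assembles correctly and the boundary conventions ($r_{\ell+1} = n_{\ell+1} = 0$, $r_0 = m$) are respected. Once the local count above is isolated and proved, the induction is routine; the only genuinely substantive combinatorial input is \cref{rank1} (Landsberg's formula), which is already available. I should also remark, when needed, that the hypothesis $r_j - r_{j+1} \le n_j - n_{j+1}$ is exactly what makes each $\mathrm{C}^{r_j-r_{j+1}}_{n_j-n_{j+1}}$ a count of spanning tuples rather than zero, and that $r_1 = \rk(A) = \rk(A_1)$ so the filtration is genuinely decreasing down to $r_{\ell+1}=0$ by the assumption $n_\ell > n_{\ell+1} = 0$ forcing $A_{\ell+1}$ empty.
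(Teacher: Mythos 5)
Your proposal is correct and follows essentially the same route as the paper's proof: induction on $\ell$, with the base case given by \cref{rank1} together with the $\GC{m}{r_1}$ count of $r_1$-dimensional subspaces, and the inductive step given by choosing the enlarged row space $W_1\supseteq W_2$ in $\GC{m-r_2}{r_1-r_2}$ ways and then counting tuples that span modulo $W_2$ in $q^{r_2(n_1-n_2)}\,\mathrm{C}^{r_1-r_2}_{n_1-n_2}$ ways (the paper realizes this last count via lifts from the quotient $W_1/W_2$ rather than your complement decomposition $U=V\oplus C$, which is the same count). The only slip is cosmetic: since $A_2$ consists of the \emph{first} $n_2$ rows, the rows adjoined in the inductive step are the last $n_1-n_2$ rows (rows $n_2+1,\dots,n_1$), not the top ones.
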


\begin{proof}
The proof is by induction on $\ell$. The case $\ell=1$
 follows from~\cref{rank1} and the fact that 
there are $
{m\brack r_1}_q
$ 
subspaces of dimension $r_1$ in an $\FF_q$-vector space of dimension $m$.
Next we assume the assertion holds for some $\ell$, and we prove it for $\ell+1$. Given vectors $w_1, \dots, w_{n_2}\in\mathbb{F}_q^m$ such that their span, denoted by $W_2$, has dimension $r_2$, we aim to count the number of $(n_1 - n_2)$-tuples $(w_{n_2+1}, \dots, w_{n_1})$ in $\mathbb{F}_q^m$ such that $\mathrm{span}\{w_1, \dots, w_{n_1}\}$ is $r_1$-dimensional. 
To this end, we first select a subspace $W_1$ of $\mathbb{F}_q^m$ of dimension $r_1$ that contains $W_2$. There are
$
{m - r_2 \brack r_1 - r_2}_q
$
such subspaces. Now let $W_1 \supseteq W_2$ be fixed. Next, we count the number of $(n_1 - n_2)$-tuples $(w_{n_2+1}, \dots, w_{n_1})$ of vectors in $W_1$ such that  $\mathrm{span}\{w_1, \dots, w_{n_1}\}$ is $r_1$-dimensional. Since the first $n_2$ vectors already generate $W_2$, these $n_1 - n_2$ vectors must be chosen in such a way that their images in $W_1 / W_2$ under the natural projection span $W_1 / W_2$. To do this, we first choose a spanning set for $W_1 / W_2$ and then lift the vectors to $W_1$. By \cref{rank1}, there are $C_{n_1 - n_2}^{r_1 - r_2}$ spanning sets for $W_1/W_2$. Since each vector in $W_1 / W_2$ has $q^{r_2}$ lifts to $W_1$, the total number of ways to choose $w_{n_2+1},\cdots,w_{n-1}$ is
$
q^{r_2(n_1 - n_2)} \mathrm{C}_{n_1 - n_2}^{r_1 - r_2}
$.
Therefore, the number of $(n_1 - n_2)$-tuples $(w_{n_2+1}, \dots, w_{n_1})$ with the desired property is
$$
q^{r_2(n_1 - n_2)} {m - r_2 \brack r_1 - r_2}_q \mathrm{C}_{n_1 - n_2}^{r_1 - r_2}.
$$
Combining the latter formula with the hypothesis of induction readily implies the claim of the lemma.
\end{proof}

  
\addcontentsline{toc}{section}{References} 
\bibliographystyle{plain}
\bibliography{Ref-Feb21}

\end{document}